\documentclass[a4paper,12pt]{amsart}

\usepackage{amsfonts}
\usepackage{amssymb}
\usepackage{graphics}

\oddsidemargin0in
\evensidemargin0.25in
\textwidth6.25in

\newcommand{\dl}{\lambda}

\newcommand{\C}{\mathbb{C}}
\newcommand{\N}{\mathbb{N}}
\newcommand{\Z}{\mathbb{Z}}
\newcommand{\R}{\mathbb{R}}

\newcommand{\fol}{\mathcal{F}}

\newcommand{\calO}{{\mathcal{O}}}

\newcommand{\Diff}{{{\rm Diff}\, ({\mathbb C^n}, 0)}}

\newcommand{\diff}{{{\rm Diff}\, ({\mathbb C}, 0)}}
\newcommand{\diffn}{{{\rm Diff}\, ({\mathbb C^n}, 0)}}

\newcommand{\ghatXone}{{\widehat{\mathfrak{X}}}}
\newcommand{\ghatX}{{\widehat{\mathfrak{X}}_2}}
\newcommand{\formdiffn}{{\widehat{\rm Diff}_{1} (\mathbb{C}^2, 0)}}
\newcommand{\formalC}{{\mathbb{C} [[x,y]]}}
\newcommand{\fieldC}{{\mathbb{C} ((x,y))}}
\newcommand{\diffCtwo}{{{\rm Diff}_1 ({\mathbb C}^2, 0)}}

\newcommand{\Diffgentwo}{{{\rm Diff}\, ({\mathbb C}^2, 0)}}


\def\picill#1by#2(#3)#4
{\vbox to #2
{\hrule width #1 height 0pt depth 0pt
\vfill\special{illustration #3 scaled #4}}}

\newtheorem{theorem}{Theorem}
\newtheorem{prop}{Proposition}

\newtheorem{lemma}{Lemma}

\newtheorem{obs}{Remark}

\theoremstyle{definition}
\newtheorem{defi}{Definition}

\theoremstyle{remark}

\begin{document}

\title[Topological dynamics in $\Diff$]{Discrete orbits and special subgroups of $\Diff$}

\author{Julio C. Rebelo \, \, \, \& \, \, \, Helena Reis}
\address{}
\thanks{}

\begin{abstract}
The local topological dynamics of subgroups of $\diffn$, with special emphasis on $\Diffgentwo$, is discussed with a view towards integrability questions. It is proved
in particular that a subgroup of $\Diffgentwo$ possessing locally finite orbits is necessarily solvable. Other results and examples related
to higher-dimensional generalizations of Mattei-Moussu's celebrated topological characterization of integrability are also provided.
These examples also settle a fundamental question raised by the previous work of Camara-Scardua.
\end{abstract}

\maketitle

\section{Introduction}

In many senses, this paper is motivated by the celebrated topological characterization of integrable holomorphic
vector fields/foliation in dimension~$2$ obtained in \cite{M-M}. The fundamental issue singled out
in \cite{M-M} being the fact that the existence of holomorphic first integrals can be read off the topological dynamics
of the foliation, and in particular of its holonomy pseudogroup. It then follows that the existence of (non-constant) holomorphic
first integrals is a property invariant by topological conjugation.

Only recently, however, extensions of Mattei-Moussu's results
to higher dimensions have started being investigated, partly due to recent progress made in the understanding of the local
dynamics of diffeomorphisms of $(\C^2,0)$ tangent to the identity, cf. \cite{abate}, \cite{BM}, \cite{raissy} and their references.
One basic question was to know whether a (local) singular holomorphic foliation
on $(\C^n,0)$ topologically conjugate to another holomorphic foliation possessing $n-1$ independent holomorphic first integrals should
possess $n-1$ independent holomorphic first integrals as well. Though an affirmative answer seemed to be expected, in 
\cite{thesis}, the authors exhibited two topologically
conjugate foliations on $(\C^3, 0)$ such that one admits two independent holomorphic first integrals but not
the other. It became then clear that the extension of Mattei-Moussu theorem to higher dimensions was a far more subtle problem.

Ultimately the purpose of this paper is to contribute to the understanding of the above mentioned problem by presenting ``counterexamples'' as well as
affirmative statements that parallel some fundamental results known in the classical low-dimensional case. These results concern
either the topological dynamics of subgroups of ${\rm Diff}\, (\C^2,0)$ (sometimes ${\rm Diff}\, (\C^n,0)$) or, in the case of Theorem~B below,
singular foliations with a simple singularity as previously discussed in \cite{scardua}. Indeed, the main result of \cite{scardua} as well as
the question left open about ``holonomy maps with non-isolated fixed point'' were the starting points of the present work.

To state our main results, let us place ourselves in the context of pseudogroups of $\diffn$ or of $\Diffgentwo$
(the reader may check Section~2 for further details). Our first result is a simple elaboration
of the corresponding statement in \cite{M-M} that turns out to generalize the corresponding result in \cite{scardua} as it dispenses with the use of the deep
theorem on parabolic domains, valid in dimension~$2$ and due to M. Abate \cite{abate}.

\vspace{0.2cm}

\noindent {\bf Theorem~A}. {\sl Let $G \subset \diffn$ be a finitely generated pseudogroup on a small neighborhood of the origin in $\C^n$.
Given $g \in G$, let ${\rm Dom}\, (g)$ denote the domain of definition of $g$ as element of the pseudogroup in question.
Suppose that for every $g \in G$ and $p \in {\rm Dom}\, (g)$ satisfying $g(p) =p$, one of the following holds:
either $p$ is an isolated fixed point of $g$ or $g$ coincides with the identity on a neighborhood of $p$. Then the pseudogroup $G$
has finite orbits on a neighborhood of the origin if and only if $G$ itself is finite.}

\vspace{0.2cm}

\noindent {\bf Remark}. {\rm When $G$ is a subgroup of ${\rm Diff}\, (\C,0)$ the assumption of Theorem~A is automatically
verified so that the statement is reduced to Mattei-Moussu's corresponding result in \cite{M-M}. On the other hand, it is proved in
\cite{M-M} that a subgroup of ${\rm Diff}\, (\C,0)$ is not only finite but also cyclic. In full generality the second part of the statement
cannot be generalized to higher dimensions since every finite group embeds into a matrix group of sufficiently high dimension. In Section~2
the reader will find simple examples showing that, in fact, the group need not be cyclic already in dimension~$2$ and even if the additional
assumption of Theorem~A is satisfied.}

\vspace{0.2cm}

In \cite{scardua} this statement is essentially proved in dimension~$2$ by resorting to Abate's theorem, cf. Section~2 for a detailed comparison
between the corresponding statements. The authors of \cite{scardua} then go ahead to turn their statement
into an application about ``complete integrability'' of differential equations. A similar application holds in arbitrary dimensions as it will
be seen in Theorem~B below. Before stating Theorem~B, it is however convenient to mention a minor issue
concerning the formulation of the results in \cite{scardua}, as pointed
out by Y. Genzmer in  his review to the article in question. In fact, the authors have failed to mention they assumed the
corresponding holonomy maps to have only isolated fixed points, whereas the assumption is clearly needed from the corresponding proof.
In particular, if no assumption concerning isolated fixed points is put forward, it becomes unclear whether or not
a cyclic (pseudo-) group having finite orbits should be finite and whether a corresponding Siegel singularity associated to a holonomy map with finite orbits
must be ``completely integrable''. Though an affirmative answer to the latter question was expected, as pointed out by Genzmer \cite{yohann}, both statements
turned out to be false as it follows from the ``Complement to the Theorem~B'' below. The reader will find below accurate statements in these directions.

\vspace{0.2cm}

\noindent {\bf Theorem~B}. {\sl Let $\fol$ be a singular foliation associated to a holomorphic vector field $X$ with an isolated singularity at the
origin. Suppose that the linear part of $X$ has determinant different from zero, belongs to the Siegel domain and satisfy the conditions~3 and~4 in Section~2.
Suppose also that the holonomy map associated to each separatrix of $\fol$ has finite orbits and 
satisfies the conditions of Theorem~A (in the sense of the cyclic (pseudo-) group they generate). Then $\fol$ admits $n-1$ independent holomorphic first integrals.}

\vspace{0.2cm}

\noindent {\bf Remark}. {\rm Concerning the statement of Theorem~B, note that in the case~$n=3$, the above mentioned conditions~3 and~4 in
Section~2 are automatically verified provided that $X$ is of {\it strict Siegel type}. In particular Theorem~B covers the main result in \cite{scardua}.
In particular it suffices to have finite orbit for the holonomy map associated to {\it one of the separatrices of $\fol$}. 
Also the reader will find in Section~2 a slightly more accurate version of the statement of Theorem~B showing, in particular, that the assumption
concerning holonomy maps has to be verified for a certain separatrix of $\fol$, i.e. it does not have to be checked over all the separatrices}.

As mentioned, if the main assumption in Theorem~A is dropped, then the conclusions will not longer hold. In particular, we shall prove the following:

\vspace{0.2cm}

\noindent {\bf Complement to Theorem~B}. {\sl Let $\fol$ denote the foliation associated to the vector field
\[
X = x(1 + x^2yz^3) \frac{\partial }{\partial x} + y(1 - x^2yz^3) \frac{\partial }{\partial y} - z \frac{\partial }{\partial z} \, .
\]
The foliation $\fol$ does not possess two independent holomorphic first integrals (though it possesses one non-constant holomorphic first
integral). Besides the holonomy map associated to the axis $\{ x=y=0\}$ has finite orbits whereas it does not generate a finite subgroup of
$\Diffgentwo$.}

\vspace{0.2cm}

In view of the previous examples, and given that the assumption on isolated fixed points of Theorem~A is not fully satisfactory,
we may go back to the fundamental question about finding the ``correct'' generalization of Mattei-Moussu theorem. In other words, what type of algebraic
conditions a subgroup of $\diffn$ possessing finite orbits should verify. To simplify the discussion, we shall content ourselves of dealing with
subgroups of $\Diffgentwo$. Differential Galois theory, as well as Morales-Ramis theory cf. \cite{moralesetal}, suggests that a (pseudo-) group having
finite orbits may be solvable. The main result of this paper, namely Theorem~C below, confirms this suggestion. In fact, it suffices to deal with
pseudo-groups possessing {\it locally discrete orbits}\, (also called locally finite orbits) which allows us to apply the statement also to foliations
admitting meromorphic first integrals. Naturally a point $p$ is said to have locally discrete orbit, or equivalently, locally finite
orbit, under a (pseudo-) group $G$ if, for every
point $q$ in the $G$-orbit $G.p$ of $p$, there exists a neighborhood $U$ of $q$ such that the set $U \cap G.p$ is finite. A group is said to have
locally finite orbits if all its orbits are locally finite. With this terminology, we state:

\vspace{0.2cm}

\noindent {\bf Theorem~C}. {\sl Suppose that $G$ is a finitely generated (pseudo-) subgroup of $\Diffgentwo$ with locally discrete orbits. Then $G$ is
solvable.}

\vspace{0.2cm}

Theorem~C is the most elaborate result of this paper and to our knowledge the first general result concerning the dynamics of solvable/non-solvable
subgroups of $\diffn$ for $n \geq 2$. Several comments are needed to properly place this statement in perspective concerning previous works. First the
highly developed case $n=1$ must be singled out. In this case, the structure of solvable subgroups of ${\rm Diff}\, (\C,0)$ is well-understood
\cite{cerveaumoussu}, \cite{russians}
and, formally, our statement is a consequence of the much stronger results of Shcherbakov and Nakai \cite{shcherbakov}, \cite{nakai}. For higher dimensions,
however, there are new dynamical phenomena related, for instance, to the existence of non-solvable discrete subgroups of ${\rm GL}\, (n , \C)$, $n \geq 2$.
These phenomena prevent us from extending the results of Shcherbakov and Nakai without additional assumptions. In fact, the ``sharp'' extension of their theory
to higher dimensions remains and outstanding problem despite some significant progress made in \cite{belliart}, \cite{lorayandI}.

Another point to be made about Theorem~C is that its proof does not rely on any typically two-dimensional phenomenon and hence can probably be
extended to arbitrary dimensions, though we have not pursued this direction. Theorem~C is actually constituted by two main ideas
which nicely complement each other. On one hand, there is the standard theory of Kleinian groups and stable manifolds
that essentially allows us to reduce the proof of
the theorem in question to the case of subgroups of $\diffCtwo$, the group of holomorphic
diffeomorphisms tangent to the identity. To deal with the latter group, we then adapt the ``recurrence theorem''
established by Ghys in \cite{ghysBSBM} by means of his notion of ``pseudo-solvable'' group. An important remark concerning this adaptation is that
elements of $\Diffgentwo$ tangent to the identity are automatically ``close to the identity'' in a sense suitable to ensure convergence of
sequences of commutators.

Still considering the use Ghys's ideas to subgroups of $\diffCtwo$, it is necessary to clarify the connection between solvable
and pseudo-solvable subgroups of $\diffCtwo$. For $n=1$, Ghys showed in \cite{ghysBSBM} that these notions coincide and this result is extended
to $n=2$ here. Although the strategy followed is similar to the employed by Ghys, this extension is not immediate since the structure of solvable
subgroups of $\Diffgentwo$ is not nearly as developed as in the one-dimensional case. We are therefore led to work out several algebraic aspects
on the solvable subgroups of $\Diffgentwo, \, \diffCtwo$ and to deal with the new phenomena concerning existence of non-constant first integrals
and/or with rank~$2$ abelian groups.

Theorem~C raises a number of interesting questions, in particular connections with Morales-Sim\'o-Ramis theory \cite{moralesetal} seems very
promising. Another more specific question that may turn out to be quite deep concerns the classification of solvable non-abelian subgroups of
$\Diffgentwo$ possessing locally finite orbits. The reader is reminded that, for $n=1$, the corresponding result is due to Birkhoff, though it was independently
re-discovered by Loray in \cite{Loray}. Since this beautiful result possesses a number of applications, we believe
that its generalization to dimension~$2$ is a problem worth further investigation.

Let us finish this Introduction with an outline of the structure of this paper. Section~2 contains the proofs of Theorems~A and~B along with the
relevant definitions. As mentioned, Theorem~A is a simple elaboration of the arguments in \cite{M-M}. In turn, Theorem~B is an application
of Theorem~A going through useful results due to P. Elizarov-Il'yashenko and to Reis, \cite{EI}, \cite{helena}. Section~3 contains a few
interesting examples of local dynamics of diffeomorphisms tangent to the identity along with local foliations realizing some of them as local holonomy
map. In particular the example appearing in the Complement to Theorem~B is detailed so as to settle the main issue left open in \cite{scardua}.
Section~4 is the most technical part of the paper. It involves a detailed algebraic study of abelian and solvable (formal) groups of germs 
of diffeomorphisms in dimension~$2$. Campbell-Hausdorff type formulas are widely used in this study which, ultimately, aims at showing
that ``pseudo-solvable'' subgroups of $\diffCtwo$ are, actually, solvable (Proposition~\ref{commuting9}). A reader willing to take for grant
Proposition~\ref{commuting9} or, alternatively, content himself/herself with a statement involving ``pseudo-solvable'' groups in Theorem~C can
skip the whole of Section~4. Yet it may be pointed out that the algebraic description of solvable subgroups of $\diffCtwo$ developed
in the course of the mentioned section is original and likely to have further interest. Finally in Section~5, ideas from Ghys \cite{ghysBSBM} are combined to
Proposition~\ref{commuting9} and to standard results on Kleinian groups to yield the proof of Theorem~C.

\section{Theorems~A and~B}

In the sequel, $G$ denotes a finitely generated subgroup of $\diffn$, where $\diffn$ stands for the group of germs of
local holomorphic diffeomorphisms of $\C^n$ fixing the origin. Assume then that $G$
is generated by the elements $h_1, \ldots, h_k \in \diffn$. A natural way to make sense of the local dynamics of $G$ consists
of choosing representatives for $h_1, \ldots, h_k$ as local diffeomorphisms fixing $0 \in \C$. These representatives are still denoted
by $h_1, \ldots, h_k$ and, once this choice is made, $G$ itself can be identified to the {\it pseudogroup}\,
generated by these local diffeomorphisms on a (sufficiently small) neighborhood of the origin. It is then convenient to begin by
briefly recalling the notion of {\it pseudogroup}. For this, consider a small neighborhood $V$ of the origin where the
local diffeomorphisms $h_1, \ldots, h_k$, along with their inverses $h_1^{-1}, \ldots, h_k^{-1}$, are defined and one-to-one.
The pseudogroup generated by $h_1, \ldots, h_k$ (or rather by $h_1, \ldots , h_k, h_1^{-1}, \ldots, h_k^{-1}$ if there is any risk of confusion)
on $V$ is defined as follows. Every element of $G$ has the form $F = F_s \circ \ldots \circ F_1$ where each $F_i$,
$i \in \{1, \ldots, s\}$, belongs to the set $\{h_i^{\pm 1}, i=1, \ldots, k\}$. The element $F \in G$ should be regarded as a one-to-one holomorphic map
defined on a subset of $V$. Indeed, the domain of definition of $F = F_s \circ \ldots \circ F_1$, as an
element of the pseudogroup, consists of those points $x \in V$ such that for every $1 \leq l < s$ the point $F_l \circ
\ldots \circ F_1(x)$ belongs to $V$. Since the origin is fixed by the diffeomorphisms $h_1, \ldots, h_k$, it follows that the
domain of definition of every element $F$ is a non-empty open set containing
the origin. This open set may however be disconnected. Whenever no misunderstanding is possible, the pseudogroup defined above will also
be denoted by $G$ and we are allowed to shift back and forward from $G$ viewed as pseudogroup or as group of germs.

Let us continue with some definitions that will be useful throughout the text. Suppose we are given local holomorphic diffeomorphisms $h_1,
\ldots, h_k, h_1^{-1}, \ldots, h_k^{-1}$ fixing the origin of $\C^n$. Let $V$ be a neighborhood of the origin where all these local diffeomorphisms
are defined and one-to-one. From now on, let $G$ be viewed as the pseudogroup acting on $V$ generated by these local diffeomorphisms.
Given an element $h \in G$, the domain of definition of $h$ (as element of $G$) will be denoted by ${\rm Dom}_V (h)$.

\begin{defi}
The $V_G$-orbit $\calO_V^G (p)$ of a point $p \in V$ is the set of points in $V$ obtained from $p$ by taking its image through every element of $G$ whose
domain of definition (as element of $G$) contains $p$.
In other words,
\[
\calO_V^G (p) = \{q \in V \; \, ; \; \,  q = h(p), \; h \in G \; \; {\rm and} \; \; p \in {\rm Dom}_V (h) \} \, .
\]
Fixed $h \in G$, the $V_h$-orbit of $p$ can be defined as the $V_{<h>}$-orbit of $p$, where $<h>$ denotes the subgroup
of $\diffn$ generated by $h$.
\end{defi}

We can now define ``pseudogroups with finite orbits" and ``pseudogroups with locally discrete orbits''.

\begin{defi}\label{def_finiteorbits}
A pseudogroup $G \subseteq \diffn$ is said to have finite orbits if there exists a sufficiently small open neighborhood $V$ of $0
\in \C^n$, where $h_1, \ldots , h_k, h_1^{-1}, \ldots, h_k^{-1}$ are well-defined injective maps, such that the set $\calO_V^G (p)$
is finite for every $p\in V$. Analogously, $h \in G$ is said to have finite orbits if the pseudogroup $\langle h \rangle$
generated by $h$ has finite orbits.

Similarly, a pseudogroup is said to have locally discrete orbits (or equivalently locally finite orbits)
if for every $p \in V$ and for every point $q \in \calO_V^G (p)$,
there exists a neighborhood $W \subset \C^n$ of $q$ such that $W \cap \calO_V^G (p) = \{ q \}$.
\end{defi}

Fixed $h \in G$, the {\it number of iterations of $p$ by $h$}\, is the cardinality of the set $\{ n \in \Z \; \, ; \; \, p \in {\rm Dom}_V (h^{n}) \}$,
where ${\rm Dom}_V (h^{n})$ stands for the domain of definition of $h^n$ as element of the pseudogroup in question.
The number of iterations of $p$ by $h$ is denoted by $\mu_V^h (p)$ and belongs to $\N \cup \{\infty\}$. The lemma below is attributed
to Lewowicz and it can be found in \cite{M-M}.

\begin{lemma}[Lewowicz]\label{lemmalewowicz}
Let $K$ be a compact connected neighborhood of $0 \in \R^n$ and $h$ a homeomorphism from $K$ onto $h(K) \subseteq \R^n$
verifying $h(0) = 0$. Then there exists a point $p$ on the boundary $\partial K$ of $K$ whose number of iterations
in $K$ by $h$ is infinite, i.e. $p$ satisfies $\mu_K^h (p) = \infty$.\qed
\end{lemma}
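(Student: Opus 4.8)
The plan is to argue by contradiction, first translating the statement about $\mu_K^h$ into one about invariant sets. Observe that for a fixed $p$ the set $\{n \in \Z : p \in {\rm Dom}_K (h^n)\}$ is an interval of integers containing $0$: surviving $n$ forward (respectively backward) steps inside $K$ forces surviving every smaller number of steps. Hence $\mu_K^h (p) = \infty$ if and only if the forward orbit $\{h^n(p) : n \ge 0\}$ is entirely contained in $K$, or the backward orbit $\{h^{-n}(p) : n \ge 0\}$ is. Introducing the forward- and backward-stable sets
\[
W^+ = \{ p \in K : h^n(p) \in K \ {\rm for\ all\ } n \ge 0 \}, \qquad W^- = \{ p \in K : h^{-n}(p) \in K \ {\rm for\ all\ } n \ge 0 \},
\]
the lemma becomes the assertion that $(W^+ \cup W^-) \cap \partial K \neq \emptyset$. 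Note that $0 \in W^+ \cap W^-$ since $h(0)=0$, and that $0 \in {\rm int}(K)$ since $K$ is a neighborhood of the origin.

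First I would record that $W^+$ and $W^-$ are compact. Writing $A_n^+ = \{ p \in K : h^j(p) \in K, \ 0 \le j \le n \}$, each $A_n^+$ is closed (hence compact) by continuity of $h$ on $K$, the sequence is nested, and $W^+ = \bigcap_n A_n^+$; the same applies to the analogous $A_n^-$ and $W^-$. Now suppose, for contradiction, that $(W^+ \cup W^-) \cap \partial K = \emptyset$, so that both $W^+$ and $W^-$ lie in the open set ${\rm int}(K)$. Then the compact sets $A_n^+ \cap \partial K$ are nested with empty intersection $W^+ \cap \partial K = \emptyset$, so by the finite intersection property there is $N$ with $A_N^+ \cap \partial K = \emptyset$, i.e. $A_N^+ \subset {\rm int}(K)$; enlarging $N$ we may assume $A_N^- \subset {\rm int}(K)$ as well. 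In other words we obtain a \emph{uniform} escape time: every boundary point leaves $K$ within $N$ forward iterations and within $N$ backward iterations.

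The decisive step is to derive a contradiction from this uniform two-sided escape, and the natural mechanism is topological degree (a Ważewski-type principle). Since $h$ is a homeomorphism onto its image fixing $0 \in {\rm int}(K)$, injectivity gives $0 \notin h(\partial K)$, and by invariance of domain the local Brouwer degree $\deg (h, {\rm int}(K), 0)$ equals $\pm 1$, in particular is nonzero; the same holds for each iterate $h^j$ restricted to $A_j^+$. The guiding heuristic is the one-dimensional model, where $h$ is monotone with $h(0)=0$: if $h$ points ``inward'' at some endpoint then that endpoint lies in $W^+$, whereas if $h$ points ``outward'' at both endpoints then the fixed point at $0$ forces the backward orbit of an endpoint to converge to $0$, placing it in $W^-$; either way a boundary point has infinite iterations. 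I would make this precise in general by showing that the uniform escape hypothesis permits one to deform $h$, through maps keeping $0$ off the image of $\partial K$, into a map having no preimage of $0$ in $K$ --- equivalently, to build a retraction of $K$ onto $\partial K$ --- thereby forcing the nonzero degree to vanish, a contradiction with the no-retraction theorem. The main obstacle is exactly that the escape time is integer-valued and hence discontinuous, so one cannot merely ``flow each point to its exit point on $\partial K$'': the consecutive iterates $h^k(p) \in K$ and $h^{k+1}(p) \notin K$ are joined by no canonical path meeting $\partial K$. This is circumvented by running the entire argument through the degree of the homeomorphism and its iterates on the nested blocks $A_n^{\pm}$ --- which is homotopy-stable and therefore insensitive to the jump discontinuity of the escape time --- rather than through any continuous flow.
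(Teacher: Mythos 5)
The paper itself does not prove this lemma---it is stated with a \verb|\qed| and attributed to Lewowicz via \cite{M-M}---so your proposal must stand on its own, and it does not: the first half is correct, but the heart of the lemma is announced rather than executed. Your reduction of $\mu_K^h(p)=\infty$ to $p\in (W^+\cup W^-)$, and the nested-compact-sets argument producing a uniform $N$ such that every point of $\partial K$ leaves $K$ within $N$ forward \emph{and} $N$ backward iterations, are both fine. But at the ``decisive step'' you never exhibit the homotopy, the retraction, or the degree computation you invoke. You correctly identify the obstruction (the integer-valued exit time is discontinuous, so there is no canonical deformation of $h$ past $\partial K$), and then dismiss it by saying the argument is ``run through the degree of the iterates on the nested blocks''---this names no mechanism. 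Note that the degree of $h^j$ on a component of the interior of $A_j^+$ containing $0$ is indeed $\pm 1$, but nonvanishing degree only reasserts that $h^j(x)=0$ has a solution, which is the trivially true statement $h^j(0)=0$; by itself it contradicts nothing. Worse, the no-retraction principle you appeal to is not available at this level of generality: $K$ is an arbitrary compact connected neighborhood, so $\partial K$ need not be connected (an annulus) nor a manifold nor even locally connected, and ``nonzero degree forbids a retraction of $K$ onto $\partial K$'' is itself an unproved claim here. Since deriving a contradiction from uniform two-sided escape \emph{is} the content of Lewowicz's lemma, what you have proved is only the easy half.

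The missing step can in fact be completed by elementary point-set topology, with no degree theory, and this is in substance the classical argument the paper defers to. With $N$ as in your step 2, set $P=A_{2N}^+$ and first check that $h^j(P)\cap\partial K=\emptyset$ for every $0\le j\le 2N$: a point $y=h^j(x)$ with $x\in P$ satisfies $h^{-i}(y)=h^{j-i}(x)\in K$ for $0\le i\le j$ (here injectivity of $h$ is used, so that backward iterates are well defined) and $h^{i}(y)=h^{j+i}(x)\in K$ for $0\le i\le 2N-j$; since $\max\{j,\,2N-j\}\ge N$, the point $y$ survives at least $N$ steps in one of the two directions, which no point of $\partial K$ does. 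Now let $\Omega$ be the set of points $x$ all of whose iterates $h^j(x)$, $0\le j\le 2N$, lie in ${\rm int}\, K$. Then $\Omega$ is open, bounded, and nonempty since $h^j(0)=0\in {\rm int}\,K$ for all $j$; hence $\partial \Omega\neq\emptyset$, because a nonempty bounded open subset of $\R^n$ cannot have empty boundary. But any $x\in\partial\Omega$ lies in $\overline{\Omega}\subseteq P$ (continuity propagates ``all $2N$ iterates in $K$'' to limits), and cannot have all iterates in ${\rm int}\,K$ (else $x\in\Omega$, impossible for a boundary point of an open set), so $h^j(x)\in\partial K$ for some $0\le j\le 2N$---contradicting $h^j(P)\cap\partial K=\emptyset$. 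This closes exactly the gap in your writeup; observe that it does not even use the connectedness of $K$, which the paper needs only in the subsequent application of the lemma.
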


Fixed an open set $V$,  note that the existence of points in $V$ such that $\mu_K^h (p) = \infty$ does not imply that
$p$ is a point with infinite orbit, i.e. there may exist points $p$ in $V$ such that $\mu_V^h(p)=\infty$ but $\#
\calO_V^{<h>}(p)<\infty$, where $\#$ stands for the cardinality of the set in question.
These points are called {\it periodic for $h$ on $V$}. A local diffeomorphism is said to be {\it periodic}\, if there is $k \in \N^{\ast}$
such that $f^k$ coincides with the identity on a neighborhood of the origin. Clearly periodic diffeomorphisms possess finite orbits.
To prove Theorem~A, we first need to show the following.

\begin{prop}\label{propperiodic}
Let $h$ be an element of a subgroup $G \subseteq \diffn$, where $G$ is as stated in Theorem~A. Then $h$ is
periodic.
\end{prop}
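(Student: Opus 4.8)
The plan is to follow the strategy of Mattei--Moussu \cite{M-M}, organizing the argument in two phases: first showing that the linear part $Dh(0)$ has finite order, and then showing that a tangent-to-identity element with finite orbits (satisfying the hypothesis of Theorem~A) must be the identity.

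First I would analyze the linear part. Let $\lambda_1, \ldots, \lambda_n$ be the eigenvalues of $Dh(0)$; since $G$ has finite orbits and $h^{-1} \in G$ as well, I claim that every $\lambda_i$ is a root of unity and that $Dh(0)$ is diagonalizable. Indeed, if some $|\lambda_i| \neq 1$, the associated contracting/expanding direction produces, for suitable small points, infinitely many distinct iterates remaining near the origin, contradicting finiteness of the orbit; if $|\lambda_i| = 1$ with $\lambda_i$ not a root of unity, the corresponding rotation again yields infinitely many distinct iterates for a point detected by Lemma~\ref{lemmalewowicz} to satisfy $\mu_K^h(p) = \infty$; and a non-trivial Jordan block would force polynomial growth of the iterates in the corresponding variables, once more giving an infinite orbit. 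Hence $Dh(0)$ has finite order $q$, and $f := h^q$ is tangent to the identity.

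It then remains to prove that $f = {\rm id}$. Suppose not. Recalling that the group of germs tangent to the identity is torsion free, every power $f^r$ ($r \geq 1$) is tangent to the identity and distinct from ${\rm id}$; applying the hypothesis of Theorem~A to $f^r \in G$ at the fixed point $0$, the origin is an \emph{isolated} fixed point of each $f^r$ (the alternative, $f^r = {\rm id}$ near $0$, would force $f^r = {\rm id}$ by the identity principle, a contradiction). On the other hand, fixing an arbitrarily small compact ball $K$ centered at the origin, Lemma~\ref{lemmalewowicz} provides $p \in \partial K$ with $\mu_K^f(p) = \infty$; since $\calO_K^{\langle f \rangle}(p)$ is finite, the infinitely many iterates remaining in $K$ must repeat, so $p$ is periodic: $f^r(p) = p$ for some $r \geq 1$, with $p \neq 0$ and $|p|$ as small as desired. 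This produces non-zero fixed points of powers of $f$ accumulating at the origin, which I would play against the isolated-fixed-point property to reach a contradiction, thereby forcing $f = {\rm id}$ and hence the periodicity of $h$.

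The delicate step --- and the one replacing the appeal to Abate's theorem made in \cite{scardua} --- is precisely this last contradiction, because a priori the period $r$ of the Lewowicz point depends on the scale $K$, so isolatedness of $0$ in a \emph{single} $\mathrm{Fix}(f^r)$ does not suffice. To control all powers simultaneously I would examine the lowest-order homogeneous term $N$ of $f(z) - z$: writing $f^r(z) - z = r\,N(z) + O(|z|^{\deg N + 1})$, the leading term is the same for every $r$ up to the factor $r$, so whenever $N$ is non-vanishing off the origin one obtains a neighborhood of $0$, \emph{independent of} $r$, containing no non-zero fixed point of any $f^r$, directly contradicting the existence of the small periodic points $p$ above. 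The remaining case, in which the zero locus of $N$ is positive-dimensional, is where the real work lies: here one must invoke the hypothesis of Theorem~A (isolatedness of the fixed points of the $f^r$) to exclude a curve of fixed points and so rule out accumulation of periodic points along the cone $\{N = 0\}$. I expect this cone analysis to be the main obstacle of the proof.
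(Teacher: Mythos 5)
Your reduction to a tangent-to-identity power $f=h^q$ and the harvesting of small periodic points via Lemma~\ref{lemmalewowicz} are sound, but the proof breaks at exactly the step you flag, and the fix you propose does not work. The expansion $f^r(z)-z=rN(z)+O(|z|^{\deg N+1})$ is \emph{not uniform in} $r$: writing $f={\rm Exp}\,(X)$ with infinitesimal generator $X=N+\cdots$, one has $f^r={\rm Exp}\,(rX)$, and the Campbell--Hausdorff/Taylor terms of degree $2\deg N-1$ carry coefficients of order $r^2$, and so on. Hence the region where $rN$ dominates the error shrinks like $r^{-1/(\deg N-1)}$ as $r\to\infty$, and there is in general \emph{no} neighborhood of the origin, independent of $r$, free of non-zero fixed points of all the $f^r$ --- even when $N$ vanishes only at the origin. (Example~2 of the paper, $F(x,y)=(x(1+xyf(xy)),y(1+xyf(xy))^{-1})$, shows concretely that for tangent-to-identity maps periodic points of unbounded period do accumulate at the origin; there they even form curves, which is precisely what the hypothesis of Theorem~A must be used to exclude.) Your Lewowicz points $p_j\to 0$ come one per scale, so they are only countably many, with periods $r_j$ possibly tending to infinity. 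If some period repeated infinitely often you could indeed conclude (fixed points of a single power accumulate, the hypothesis forces that power to be the identity near the accumulation point, and analytic continuation finishes); but nothing in your argument rules out $r_j\to\infty$, and both of your closing routes --- the ``$r$-independent neighborhood'' in the non-degenerate case and the cone analysis in the degenerate case --- are respectively false and admittedly open. The first phase of your plan (finiteness of the order of $Dh(0)$, with the irrational-rotation and Jordan-block cases) is likewise only sketched, though it turns out to be unnecessary.

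The paper closes exactly this gap by a counting mechanism, with no normal-form or leading-term analysis at all, and treats arbitrary $h$ uniformly (no splitting into linear part plus tangent-to-identity part). Assuming $h$ non-periodic, it partitions a closed ball $D_{\rho_0}$ into the periodic points ${\bf P}$, the points ${\bf F}$ leaving after finitely many iterations, and the points ${\bf I}$ with infinite orbit, and sets $C=\bigcap_n C_n$, where $C_n$ is the connected component through the origin of the domain $A_n$ of $h^n$ in the pseudogroup on $D_{\rho_0}$. If $C$ is uncountable while ${\bf I}$ is countable, then $C\cap{\bf P}=\bigcup_n P_n$ is uncountable, so by pigeonhole a \emph{single} period $n_0$ has $P_{n_0}$ infinite; its points accumulate at some $p$ in the compact $C_{n_0}$, the hypothesis of Theorem~A forces $h^{n_0}$ to equal the identity near $p$, hence on all of $C_{n_0}$ by analyticity --- contradicting non-periodicity. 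If instead $C$ is countable, connectedness gives $C=\{0\}$, and one chooses a compact connected neighborhood $K$ of some $C_{n_0}$ with $\partial K\cap A_{n_0}=\emptyset$, so that $\partial K\cap{\bf P}=\emptyset$; Lemma~\ref{lemmalewowicz} then places a point of ${\bf I}$ on $\partial K_\varepsilon$ for a whole interval $0\leq\varepsilon\leq\varepsilon_0$ of enlarged compacta, making ${\bf I}$ uncountable and accumulating at the origin, which contradicts finite orbits. The idea your proposal is missing is precisely this: produce \emph{uncountably} many points of ${\bf P}\cup{\bf I}$ (via the continuum of boundaries $\partial K_\varepsilon$), so that countably many possible periods force one fixed power $h^{n_0}$ to have non-isolated fixed points, at which point the hypothesis of Theorem~A applies; scale-by-scale periodic points can never achieve this.
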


Assuming that Proposition~\ref{propperiodic} holds, then Theorem~A can be derived as follows:

\begin{proof}[Proof of Theorem~A]
We want to prove that $G$ is finite (for example at germs level). So, let us consider the homomorphism
$\sigma : G \rightarrow GL(n, \C)$ assigning to an element $h \in G$ its derivative $D_0 h$ at the origin.
The image $\sigma (G)$ of $G$ is a finitely generated subgroup of $GL(n, \C)$ all of whose elements have finite order.
According to Schur's theorem concerning the affirmative solution of Burnside problem for linear groups, the group
$\sigma (G)$ must be finite, cf. \cite{burnside}. Therefore, to conclude that
$G$ is itself finite, it suffices to check that $\sigma$ is one-to-one or, equivalently, that its kernel is reduced to the identity.
Hence suppose that $h \in G$ lies in the kernel of $\sigma$, i.e. $D_0 h$ coincides with the identity. To show that
$h$ itself coincides with the identity, note that $h$ must be periodic since it has finite orbits, cf. Proposition~\ref{propperiodic}.
Therefore $h$ is conjugate to its linear part at the origin, i.e. it is conjugate to the identity map. Thus $h$ coincides
with the identity on a neighborhood of the origin of $\C^n$. The theorem is proved.
\end{proof}

Before proving Proposition~\ref{propperiodic}, let us make some comments concerning the proof of Theorem~A.
Concerning the case $n=1$, Leau theorem immediately implies that the above considered homomorphism $\sigma$
is one-to-one so that $G$ will be abelian and, indeed, cyclic. This fact does not carry over higher dimensions since,
as already mentioned, every finite group
can be realized as a matrix group and therefore as a pseudogroup of $\Diff$ having finite orbits. Yet, in general, groups obtained in this manner
contain non-trivial elements with non-isolated fixed points. Therefore, if we are dealing with pseudogroups satisfying the conditions of
Theorem~A, the question on whether $G$ is abelian may still be raised. However, even in this restricted setting the group $G$ need not be
abelian. For example, let $G$ be the subgroup of ${{\rm Diff}\, ({\mathbb C^2}, 0)}$
generated by $h_1(x,y) = (e^{\pi i/3} x, e^{2\pi i/3} y)$ and $h_2(x,y) = (y,x)$. Every element of $G$ has finite orbits and possesses a single
fixed point at the origin but the group $G$ is not abelian.

Let us now prove Proposition~\ref{propperiodic}. As already pointed out, the proof amounts to a careful reading of the
argument supplied in \cite{M-M} for the case $n=1$.

\begin{proof}[Proof of Proposition~\ref{propperiodic}]
Let $h$ be a local diffeomorphims in $\diffn$ whose periodic points are isolated unless the corresponding power of $h$
coincides with the identity on a neighborhood of the mentioned point. Let us assume that $h$ is not periodic. To prove the
statement, we are going to show the existence of an open neighborhood $U$ of $0 \in \C^n$ such that the set of points
$x \in U$ with infinite $U_{<h>}$-orbit is uncountable and has the origin as an accumulation point. It will then result that $h$ cannot have finite
orbits, thus proving the proposition.

Let $U$ be an arbitrarily small open neighborhood of $0 \in \C^n$ contained in the domains of definition of $h, \, h^{-1}$. Suppose also
that $h, \, h^{-1}$ are one-to-one on $U$. Consider
$\rho_0 > 0$ such that $D_{\rho_0} \subseteq U$, where $D_{\rho_0}$ stands for the closed ball of radius $\rho_0$ centered at the origin.
Following \cite{M-M}, we define the following sets
\begin{eqnarray*}
{\bf P} & = & \{x \in D_{\rho_0} : \; \mu_{D_{\rho_0}}(x) = \infty , \; \# \calO_{D_{\rho_0}}^{<h>}(x) < \infty\} \, , \\
{\bf F} & = & \{x \in D_{\rho_0} : \; \mu_{D_{\rho_0}}(x) < \infty , \; \# \calO_{D_{\rho_0}}^{<h>}(x) < \infty\} \, , \\
{\bf I} & = & \{x \in D_{\rho_0} : \; \mu_{D_{\rho_0}}(x) = \infty , \; \# \calO_{D_{\rho_0}}^{<h>}(x) = \infty\} \, . \\
\end{eqnarray*}
In other words, ${\bf P}$ is the set of periodic points in $D_{\rho_0}$ for $h$, ${\bf F}$ denotes the set of points leaving $D_{\rho_0}$ after finitely many
iterations and ${\bf I}$ stands for the set of non-periodic points with infinite orbit. Naturally, $D_{\rho_0} = {\bf P} \cup {\bf F}
\cup {\bf I}$ and Lewowicz's lemma implies that
\[
({\bf P} \cup {\bf I}) \cap \partial D_\rho\neq \emptyset \, .
\]
for every $\rho \leq \rho_0$. Thus, at least one between ${\it P}$ and ${\bf I}$ is uncountable. In what follows, the diffeomorphism $h$ is supposed
to be non-periodic. With this assumption, our purpose is to show that ${\bf I}$ must be uncountable.

For $n \geq 0$, let $A_n$ denote the domain of definition of $h^n$ viewed as an element of the pseudogroup {\it generated on $D_{\rho_0}$}. Clearly
$A_{n+1} \subseteq A_n$. Next, let $C_n$ be the connected (compact) component of $A_n$ containing the origin and set
\[
C = \bigcap_{n \in \N} C_n \, .
\]
Note that $C$ is the intersection of a decreasing sequence of compact connected sets. Therefore $C$ is non-empty and connected.

\vspace{0.1cm}

\noindent {\it Claim}: Without loss of generality, the set $C$ can be supposed countable.

\begin{proof}[Proof of the Claim]
Suppose that $C$ is uncountable.
The reader is reminded that our aim is to conclude that ${\bf I}$ is uncountable provided that $h$ is not periodic. Therefore
we suppose for a contradiction that ${\bf I}$ is countable.
Since ${\bf I}$ is countable so is ${\bf I} \cap C$.
Consider now $C \cap {\bf P}$ and note that this intersection must be uncountable, since $C \subset {\bf P} \cup {\bf I}$. Let
\[
C \cap {\bf P} = \bigcup_{n\in\N} P_n \, ,
\]
where $P_n$ is the set of points $x \in C \cap {\bf P}$ of period $n$. Note that there exists a certain $n_0 \in \N$ such that
$P_{n_0}$ is infinite, otherwise all of the $P_n$ would be finite and $C \cap {\bf P}$ would be countable. Being infinite,
$P_{n_0}$ has a non-trivial accumulation point $p$ in $C_{n_0}$. The map $h^{n_0}$ is holomorphic on an open neighborhood
of $C_{n_0}$ and it is the identity on $P_{n_0} \cap C_{n_0}$. Since $p$ is not an isolated fixed point of $h^{n_0}$, it
follows that $h^{n_0}$ coincides with the identity map on $C_{n_0}$, i.e. on the connected component of the domain of
definition of $h^{n_0}$ that contains the origin. This contradicts the assumption of non-periodicity of $h$ (modulo reducing the
neighborhood of the origin). Hence ${\bf I}$
is uncountable as desired.
\end{proof}

In view of the preceding, in the sequel $C$ will be supposed to consist of countably many points. The purpose is still to conclude
that the set ${\bf I}$ is uncountable. Since $C$ is
connected, it follows that $C$ is reduced to the origin. Then, for every $\rho < \rho_0$, we have $C \cap \partial D_\rho =
\emptyset$. Now note that, for a fixed $\rho > 0$, the sets
\[
C_1 \cap \partial D_\rho, \, \,  (C_1 \cap C_2) \cap \partial D_\rho, \, \,  (C_1 \cap C_2 \cap C_3)\cap \partial D_\rho, \, \,  \ldots
\]
form a decreasing sequence of compact sets. Hence the intersection $\bigcap_{n\in\N} C_n \cap \partial D_\rho$ is non-empty,
unless there exists $n_0 \in \N$, such that $C_{n_0} \cap \partial D_\rho = \emptyset$. The latter case must occur since
$C \cap \partial D_\rho = \emptyset$. However, the value of $n_0$
for which the mentioned intersection becomes empty may depend on $\rho$.

Fix $\rho > 0$ and let $n_0$ be as above. Let $K$ be a compact connected neighborhood of $C_{n_0}$ that does not intersect
the other connected components of $A_{n_0}$, if they exist. The set $K$ can be chosen so that $\partial K \cap A_{n_0} = \emptyset$.
The inclusion $A_{n+1} \subseteq A_n$ guarantees that $\partial K$ does not intersect $A_n$, for every $n \geq n_0$.
Therefore
\[
\partial K \cap {\bf P} = \emptyset \, .
\]
In fact, if there were a periodic point $x$ of $D_{\rho_0}$ on $\partial K$, then $x$ would belong to every set $A_n$. In
particular, it would belong to $A_{n_0}$, hence leading to a contradiction. Nonetheless, Lewowicz's lemma guarantees the existence
of a point $x$ on the boundary $\partial K$ of $K$ such that the number of iterations in $K$ is infinite, i.e. such that
$\mu_K (x) = \infty$. Since $K \subseteq D_{\rho} \subseteq D_{\rho_0}$, it follows that
\[
\partial K \cap {\bf I} \ne \emptyset \, .
\]
By construction, it is clear that a compact set $K$ satisfying the properties above is not unique. Indeed, for $K$ as above, denote
by $K_{\varepsilon}$ the compact connected neighborhood of $K$ whose boundary has distance to
$\partial K$ equal to $\varepsilon$. Then, there exists $\varepsilon_0 > 0$ such that $K_{\varepsilon}$ satisfies the
same properties as $K$ for every $0 \leq \varepsilon \leq \varepsilon_0$ with respect to $A_{n_0}$. In particular,
\[
\partial K_{\varepsilon} \cap {\bf I} \ne \emptyset
\]
for all $0 \leq \varepsilon \leq \varepsilon_0$. Therefore ${\bf I}$ must be uncountable. Finally, it remains to prove
that $0 \in \C^n$ is an accumulation point of ${\bf I}$. This is, however, a simple consequence of the fact that a compact set $K
\subseteq D_{\rho}$ as above can be considered for all $\rho > 0$. This completes the proof of Proposition~\ref{propperiodic}.
\end{proof}

\begin{obs}
{\rm Concerning Proposition~\ref{propperiodic} in dimension~$2$ and in the case where $h$ is tangent to the identity, the use made
in \cite{scardua} of Abate's theorem \cite{abate} has an advantage compared to Proposition~\ref{propperiodic}, namely:
Abate's theorem shows that it suffices to check that the origin is an
isolated fixed point of $h$ itself whereas, in more general cases, Proposition~\ref{propperiodic} requires all non-trivial powers of $h$ to
have only isolated fixed points.}
\end{obs}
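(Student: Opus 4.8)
The plan is to establish the two comparisons asserted in the remark as separate, essentially independent, points. The first is that in dimension~$2$, and for $h \in \diffCtwo$ tangent to the identity, the mere hypothesis that the origin is an \emph{isolated} fixed point of $h$ already forces $h$ to be the identity whenever $h$ has finite orbits; the second is that the proof of Proposition~\ref{propperiodic} genuinely consumes the isolated-fixed-point assumption on \emph{every} non-trivial power of $h$, and cannot be rearranged so as to use it on $h$ alone.

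For the first point, I would invoke Abate's theorem \cite{abate}: if $h$ is tangent to the identity and the origin is an isolated fixed point of $h$, then $h$ admits a parabolic curve at the origin, that is, an injective holomorphic map $\varphi$ from a petal $\Delta \subset \C$ into $(\C^2, 0)$, with $\varphi(\zeta) \to 0$ as $\zeta \to 0$, whose image is $h$-invariant and attracted to the origin, so that $h^n(\varphi(\zeta)) \to 0$ for every $\zeta \in \Delta$. The step to isolate is the observation that such an attracted orbit is \emph{infinite}: were two iterates of a point $q = \varphi(\zeta)$, $\zeta \neq 0$, to coincide, $q$ would be periodic for $h$, whence its orbit would be finite, would avoid the origin, and so could not converge to it — contradicting $h^n(q) \to 0$. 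Thus every point of the petal has infinite $\calO_U^{\langle h \rangle}$-orbit, and since the petal contains uncountably many points accumulating at the origin, $h$ cannot have finite orbits unless $h = \mathrm{id}$. The crucial feature, which I would emphasise, is that this contradiction is produced \emph{directly} from the attracted dynamics of $h$, with no passage through any power $h^n$; hence the only hypothesis on fixed points that enters is that the origin be isolated among the fixed points of $h$ itself.

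For the second point, I would reread the proof of Proposition~\ref{propperiodic} to locate where the hypothesis is spent. After the Claim reduces matters to a countable central fibre $C$, the Lewowicz-lemma (Lemma~\ref{lemmalewowicz}) argument produces points of $\mathbf{I}$ on the boundaries $\partial K_\varepsilon$; but this is legitimate only once one has excluded an infinite family $P_{n_0}$ of period-$n_0$ points clustering at a point $p \in C_{n_0}$. That exclusion is exactly where the assumption is consumed: $h^{n_0}$ equals the identity on $P_{n_0} \cap C_{n_0}$, so $p$ is a non-isolated fixed point of $h^{n_0}$, forcing $h^{n_0} \equiv \mathrm{id}$ on $C_{n_0}$ and contradicting non-periodicity. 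Since the period $n_0$ arising here is not controlled a priori, one cannot dispense with the condition on powers. The advantage of Abate's theorem is then that it bypasses the set $\mathbf{P}$ of periodic points altogether, exhibiting an explicit uncountable family of infinite attracted orbits rather than ruling out non-isolated periodic configurations power by power. The anticipated obstacle, and the price of this cleaner hypothesis, is precisely the reliance on a deep two-dimensional phenomenon — the existence of parabolic curves — which has no analogue in the elementary Lewowicz-type argument, and which is what confines the improvement to dimension~$2$ and to germs tangent to the identity.
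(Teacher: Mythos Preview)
Your elaboration is correct, but note that in the paper this statement is a \emph{remark} offered without proof: it is simply an observation contrasting the hypotheses required by Proposition~\ref{propperiodic} with those required when Abate's theorem is available. The paper does not spell out the argument you give; it relies on the reader's familiarity with the content of \cite{abate} and on having just read the proof of Proposition~\ref{propperiodic}.

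Your justification is sound on both points. For the first, the mechanism is exactly as you describe: Abate's theorem produces a parabolic curve along which orbits of $h$ converge to the origin, and such orbits are automatically infinite, so the existence of the curve already contradicts finite orbits without ever invoking a power $h^n$. One cosmetic point: the clause ``unless $h = \mathrm{id}$'' is superfluous, since the hypothesis that the origin is an isolated fixed point already excludes the identity; the cleaner statement is that a non-trivial $h\in\diffCtwo$ with isolated fixed point at the origin cannot have finite orbits. For the second point, you have correctly located where the proof of Proposition~\ref{propperiodic} spends the hypothesis on powers: in the Claim, an accumulation of period-$n_0$ points forces $h^{n_0}$ to have a non-isolated fixed point, and the assumption is invoked for this a priori uncontrolled $n_0$.

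In short, you have supplied a proof where the paper supplies only a remark; there is nothing to compare against, and your argument stands.
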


We can now move on to prove Theorem~B. The proof of this theorem follows from
the combination of our Theorem~A with the results in \cite{EI} or in \cite{helena}.

To begin with, let $\fol$ be a singular foliation on $(\C^n, 0)$ and let $X$ be a representative
of $\fol$, i.e. $X$ is a holomorphic vector field tangent to $\fol$ and whose singular set has codimension at
least~$2$. Suppose that the origin is a singular point of $\fol$ and denote by $\dl_1, \ldots, \dl_n$ the corresponding eigenvalues of $DX$ at the origin.
Assume the following holds:
\begin{enumerate}
\item $\fol$ has an isolated singularity the origin.
\item The singularity of $\fol$ is of Siegel type.
\item The eigenvalues $\dl_1, \ldots, \dl_n$ are all different from zero and
there exists a straight line through the origin, in the complex plane, separating $\dl_1$ from the remainder eigenvalues.
\item Up to a change of coordinates, $X = \sum_{i=1}^n \dl_ix_i(1+f_i(x)) \partial /\partial x_i$, where $x=(x_1,\ldots,x_n)$ and $f_i(0)=0$ for all $i$
\end{enumerate}
The fourth condition above amounts to assuming the existence of $n$~invariant hyperplanes
through the origin. This condition as well as condition~(3) are always verified when $n=3$ provided that the singular point is of
{\it strict Siegel type}\, cf. \cite{C}. Also, recall that the singular point is said to be
of strict Siegel type if $0 \in \C$ is contained in the interior of the convex hull of $\{\dl_1,\ldots,\dl_n\}$.

Next we shall need Theorem~\ref{TMMhigher} below. This theorem generalizes to higher dimensions a unpublished
result of Mattei which, in turn, improved on an earlier version appearing in \cite{M-M}.

\begin{theorem}\label{TMMhigher}
{\rm ({\bf [EI],  [Re]})}
Let $X$ and $Y$ be two vector fields satisfying conditions (1), (2), (3) and~(4) above.
Denote by $h^X$ (resp. $h^Y$) the holonomy of $X$ (resp. $Y$) relatively to
the separatrix of $X$ (resp. $Y$) tangent to the eigenspace associated to the
first eigenvalue. Then $h^X$ and $h^Y$ are analytically conjugate if and only
if the foliations associated to $X$ and $Y$ are analytically equivalent.
\end{theorem}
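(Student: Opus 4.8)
The statement is a bi-implication whose two directions are of very different difficulty, so the plan is to dispose of the direct implication quickly and to concentrate all the effort on its converse. Suppose first that the foliations $\fol_X$ and $\fol_Y$ are analytically equivalent through a germ of biholomorphism $H\colon(\C^n,0)\to(\C^n,0)$ carrying $\fol_X$-leaves to $\fol_Y$-leaves. A separatrix is an intrinsic object of the foliation, and condition~(3) distinguishes the separatrix $S_X$ tangent to the first eigendirection among all the others; hence $H$ must send $S_X$ onto $S_Y$. Arranging $H$ to map a transversal $\Sigma_X$ to a transversal $\Sigma_Y$, the restriction $\phi=H|_{\Sigma_X}$ is a germ of biholomorphism, and since $H$ intertwines the respective path-liftings inside leaves it conjugates the return maps, $\phi\circ h^X=h^Y\circ\phi$. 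This settles the easy direction.

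For the converse, assume we are given a germ $\phi$ with $\phi\circ h^X=h^Y\circ\phi$. Using condition~(4) I would normalize so that in both cases the relevant separatrix is the $x_1$-axis $S=\{x_2=\cdots=x_n=0\}$, the invariant hyperplanes are the coordinate hyperplanes, and a transversal $\Sigma=\{x_1=\de\}\cong(\C^{n-1},0)$ is fixed. Since $S\setminus\{0\}\simeq\C^{\ast}$ has fundamental group $\Z$, the holonomy along $S$ is the cyclic group generated by the single germ $h^X\in{\rm Diff}\,(\C^{n-1},0)$, whose linear part has eigenvalues $e^{2\pi i\,\dl_j/\dl_1}$ for $j\geq 2$. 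Condition~(3) places all the ratios $\dl_j/\dl_1$ in one open half-plane; together with conditions~(1), (2) this is what singles out $S$ as the distinguished separatrix and guarantees that a tubular neighbourhood of $S\setminus\{0\}$ is foliated as the \emph{suspension} of $h^X$ over $\C^{\ast}$, i.e. as the fibre bundle over $S\setminus\{0\}$ with fibre $\Sigma$ and monodromy $h^X$. The whole transverse analytic type of $\fol_X$ near $S$ is thereby encoded in the conjugacy class of $h^X$.

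With the suspension picture in hand I would build the equivalence $H$ fibrewise. On the transversal set $H=\phi$, and then propagate along leaves: for a point $p$ near $S\setminus\{0\}$, lift a path from $\Sigma$ to $p$ inside an $\fol_X$-leaf and transport the corresponding image point along the associated $\fol_Y$-leaf. Two choices of path from $\Sigma$ to $p$ differ by a loop in $S\setminus\{0\}$, whose effect on the endpoint is a power of $h^X$; the conjugacy relation $\phi\circ h^X=h^Y\circ\phi$ is exactly what forces the two resulting values of $H(p)$ to agree, so $H$ is single-valued. Holomorphy of $H$ on this punctured neighbourhood is automatic, since path-lifting inside a holomorphic foliation depends holomorphically on the data and $\phi$ is holomorphic; equivalently, $H$ is the bundle isomorphism between the two suspensions induced by $\phi$.

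The delicate point, and the heart of the Mattei--Moussu / Elizarov--Il'yashenko argument, is to extend $H$ from this punctured neighbourhood across the separatrix $S\setminus\{0\}$ and then across the singular point $0$. Because $\phi(0)=0$ and the construction is compatible with the zero section, one expects $H$ to extend continuously across $S\setminus\{0\}$; the spectral control provided by condition~(3) is what keeps $H$ bounded as the leaves accumulate on $S$, and proving this bounded continuous extension is where essentially all the work lies, since in the Siegel regime nearby leaves accumulate on $S$ in a dynamically intricate way with small divisors and no hyperbolic contraction to invoke. Once boundedness and continuity are established, the extension is upgraded to a holomorphic one by a removable-singularity theorem: for $n\geq 3$ the separatrix has complex codimension $n-1\geq 2$ and Hartogs applies directly, the singular point being removed in the same way since it has codimension $n\geq 2$; for $n=2$ the separatrix is a hypersurface and one falls back on the more careful Riemann-type extension of Mattei--Moussu, controlled on a fundamental domain of the holonomy. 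Verifying that the resulting germ $H$ fixes the origin and conjugates $\fol_X$ to $\fol_Y$ then completes the proof, the extension step being the main obstacle.
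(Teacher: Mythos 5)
The first point to make is that the paper itself contains no proof of Theorem~\ref{TMMhigher}: the statement carries the attribution {\bf [EI], [Re]}, and the text explicitly defers to \cite{EI}, \cite{helena} (with a detailed exposition in \cite{monograph}) for its proof. Your attempt can therefore only be measured against the strategy of those references, which is the Mattei--Moussu path-lifting scheme \cite{M-M} that you correctly identify: your easy direction is fine, and the skeleton of the converse (set $H=\phi$ on a transversal $\Sigma$, propagate along leaves, use $\phi \circ h^X = h^Y \circ \phi$ for single-valuedness, then extend) is the right one. As written, however, the hard direction has genuine gaps.

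First, the claim that a tubular neighbourhood of $S\setminus\{0\}$ is foliated as the \emph{suspension} of $h^X$ over $\C^{\ast}$ is false in the Siegel domain: already for the linear model $x\,\partial/\partial x - \lambda y\,\partial/\partial y$ with $\lambda>0$, the leaf through $(x_0,y_0)$ satisfies $|y| = |y_0|\,(|x_0|/|x|)^{\lambda}$ and escapes every tube $\{ |y| \leq \varepsilon \}$ as $|x| \to 0$, so the tube is not a fibre bundle with monodromy $h^X$; your ensuing sentence that ``the whole transverse analytic type of $\fol_X$ near $S$ is thereby encoded in the conjugacy class of $h^X$'' is precisely the hard direction of the theorem, and deriving it from the suspension picture is circular. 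Second --- and this is the missing key lemma --- your construction defines $H$ only at points whose leaf can be joined to $\Sigma$ \emph{inside the fixed neighbourhood}, which a priori produces a map near $S\setminus\{0\}$ only, not a germ of biholomorphism at the origin; one must prove that the saturation of $\Sigma$ fills a full punctured polydisc about $0$ minus the invariant hypersurface $\{x_1=0\}$ (whose points lie on leaves that never meet $\Sigma = \{x_1 = \delta\}$), with uniform control of the transverse coordinates along the lifted paths, and it is exactly in this leaf-connectivity statement that conditions (2), (3) and (4) do their work --- you never state or prove it, and you assign condition (3) only a role in the later boundedness estimate. Third, your removable-singularity step fails: the locus across which $H$ must be extended is not the curve $S$ (codimension $n-1$) but contains the hyperplane $\{x_1=0\}$, of codimension one in every dimension, so Hartogs never ``applies directly'' for $n \geq 3$; the bounded, Riemann-type extension across a hypersurface --- with the boundedness estimate in the Siegel regime that constitutes the real content of \cite{EI} and \cite{helena} --- is required for all $n$, exactly as in the two-dimensional case of \cite{M-M}.
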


The proof of Theorem~\ref{TMMhigher} can be found in either \cite{EI} or \cite{helena}, a particularly detailed exposition
can be found in \cite{monograph}.
With this theorem in hand, the proof of Theorem~B goes as follows.

\begin{proof}[Proof of Theorem~B]
Let $X$ be as in the statement and denote by $\fol$ the foliation associated to $X$. Let $x_1$ be the invariant axis corresponding to the eigenvalue
$\dl_1$ as in item~(3) above. Consider also the local holonomy map $h$ relative to this invariant axis and to the foliation $\fol$. The map $h$ is defined
on a suitable local section and it can also be identified to a local diffeomorphism fixing the origin of $\C^{n-1}$. By assumption, all iterates of $h$ have isolated
fixed points. Therefore Theorem~A implies that the local orbits of $h$ are finite if and only if $h$ is periodic. Naturally, we may
assume this to be the case. Let then $N$ be the {\it period}\, of $h$, namely the smallest strictly positive integer for which $h^N$ coincides with the identity
on a neighborhood of the origin of $\C^{n-1}$ (with the above mentioned identifications). Denote also by $T$ the derivative of $h$ at the origin, which
is itself identified to a linear transformation of $\C^{n-1}$. The fact that $h$ is periodic of period~$N$ ensures that $T$ is also periodic with the same
period~$N$. In fact, $h$ and $T$ are analytically conjugate as already mentioned (i.e. $h$ is linearizable).
Moreover, $T$ is the holonomy map with respect to the axis $x_1$ associated to the foliation $\fol_Z$ induced by the linear vector field
$$
Z = \sum_{i=1}^n \dl_ix_i \partial /\partial x_i \, .
$$
It follows from Theorem~\ref{TMMhigher} that the foliation $\fol$ is analytically equivalent to the foliation $\fol_Z$. However, for $\fol_Z$
(i.e. a foliation induced by a linear diagonal vector field), it is immediate to check that complete integrability is equivalent to the periodic
character of the holonomy map $T$. Since $\fol$ and $\fol_Z$ are analytically equivalent, we conclude from what precedes that the condition
of having a local holonomy $h$ with finite orbits forces $\fol$ to be completely integrable. The converse is clear, since having $\fol$
completely integrable ensures at once that the holonomy map $h$ must be periodic. This finishes the proof of Theorem~B.
\end{proof}


\section{Examples of dynamics and the complement to Theorem~B}

This section contains some interesting examples of diffeomorphisms of $(\C^2,0)$ tangent to the identity and possessing
``special'' local dynamics along with examples of foliations where these diffeomorphisms are realized as holonomy
maps. Among these examples, the vector field mentioned in the ``complement to Theorem~B'' will be discussed in detail.

Naturally we are, in particular, interested in examples of diffeomorphisms tangent to the identity at $(0,0) \in \C^2$ and having finite orbits.
According to Theorem~A, none of these diffeomorphisms may have an isolated fixed point at the origin. Recall also that
diffeomorphisms tangent to the identity are realized as time-one maps of {\it formal vector fields}. This formal vector field
is unique and it is referred to as the {\it infinitesimal generator}\, of the diffeomorphism in question, cf. Section~3. In particular,
it is natural to look for examples among diffeomorphisms
that are time-one maps of actual holomorphic vector fields, or at least that leafwise preserve some singular holomorphic foliation. Here as usual,
it is convenient to distinguish between a vector field $X$ and its associated foliation $\fol$ obtained by eliminating non-trivial common factors among the
components of $X$: since our diffeomorphisms do not have isolated fixed points, their infinitesimal generators will not have isolated
singularities either.

\subsection{Local diffeomorphisms}

Recall that $\diffCtwo$ denotes the normal subgroup of $\Diffgentwo$ consisting of diffeomorphisms tangent to the identity.
Let us begin with some examples of diffeomorphisms in $\diffCtwo$ with interesting
local dynamics, including examples possessing finite orbits.
The simplest case where $F(x,y) = (x + f(y) , y)$, with $f(0) =f'(0) =0$, can be set aside in what follows. Note that
the foliation associated to the infinitesimal generator of $F$ is regular in this case. Examples of diffeomorphisms whose
infinitesimal generator provides a singular foliation can also be produced by successively blowing-up $F$.
Nonetheless other examples of diffeomorphisms associated to linear foliations are described below.

\vspace{0.1cm}

\noindent {\bf Example 1}: Linear vector fields.

Consider the vector field $Y$ given by $Y = x \partial /\partial x -  \lambda y \partial /\partial y$
where $\lambda =n/m$ with $m,n \in \N^{\ast}$. The foliation associated to $Y$ will be denoted by $\fol$ and it should be noted
that the holomorphic function $(x,y) \mapsto x^n y^m$ is a first integral for $\fol$. Let $\phi_Y$ denote the time-one map
induced by $Y$. The local dynamics of $\phi_Y$ can easily be described as follows. The vector field $Y$ can be projected on the axis
$\{ y=0\}$ as the vector field $x \partial /\partial x$. Therefore the (real) integral curves of $Y$ coincide
with the lifts in the corresponding leaves of $\fol$ of the (real) trajectories of $x \partial /\partial x$ on $\{ y=0\}$.
The latter trajectories are radial lines being emanated from $0 \in \{ y=0\} \simeq \C$ so that
the local dynamics of $\phi_Y$ restricted to $\{ y=0\}$ is such that, whenever $x_0 \neq 0$, the sequence $\{\phi_Y^n (x_0) \}$ marches off a
uniform neighborhood of $0 \in  \{ y=0\} \simeq \C$ as $n \rightarrow \infty$ and it converges to $0 \in  \{ y=0\} \simeq \C$
as $n \rightarrow -\infty$. Consider now the orbit of
a point $(x_0, y_0)$, $x_0y_0 \neq 0$, by $\phi_Y$. Since this is simply the lift in the leaf of $\fol$ through $(x_0, y_0)$ of the dynamics
of $x_0 \in \{ y=0\} \simeq \C$, it follows that $\phi_Y^n (x_0, y_0)$ leaves a fixed
neighborhood of $(0,0) \in \C^2$ since the first coordinate increases to uniformly large values provided that $n \rightarrow \infty$.
Similarly, when $n \rightarrow -\infty$, the first coordinate of $\phi_Y^n (x_0, y_0)$ must converge to
{\it zero}\, so that the second coordinate becomes ``large'' due to the first integral $x^ny^m$. Thus, fixed a (small) neighborhood
$U$ of $(0,0) \in \C^2$, every orbit of $\phi_Y$ that is not contained in $\{ x=0\} \cup \{ y=0\}$ is bound to intersect $U$ at finitely many points.

Clearly the time-one map induced by $Y$ is not tangent to the identity. However, examples of time-one maps tangent to the identity and satisfying
the desired conditions can be obtained, for example, by considering the vector field $X = x^ny^m Y$ and taking the time-one map $\phi_X$ induced by $X$.
Clearly the linear part of $X$ at $(0,0)$ equals zero so that $\phi_X$ must be tangent to the identity. Furthermore, the multiplicative factor $x^ny^m$
annihilates the dynamics of $\phi_X$ over the coordinate axes so that only the orbits of points $(x_0, y_0)$, with $x_0y_0 \neq 0$, have to be considered.
The leaf of $\fol$ through $(x_0, y_0)$ will be denoted by $L$. Also let $c \in \C$ be the value of $x^ny^m$ on $L$.
The restriction of $X$ to $L$ is nothing but the restriction of $Y$ to $L$ multiplied by the scalar
$c \in \C$. Therefore the real orbits of $X$ in $L$ coincide with the lift to $L$ of the real orbits of the vector field
$cx  \partial /\partial x$ defined on $\{ y=0\}$.  The geometric nature of the orbits of $cx \partial /\partial x$
depends on the argument of $c \in \C$, i.e. setting $c= \vert c \vert
e^{2\pi i \alpha}$, this geometry depends on $\alpha \in [0, 2\pi)$. If $\alpha = \pi/2$, then the orbits of $cx \partial /\partial x$ are contained in circles about the origin.
After finitely many tours, these circles lift into the corresponding leaf (i.e. the leaf on which $xy$ equals~$c$) as closed paths invariant by $\phi_X$. In addition
for a ``generic'' choice of $c$ satisfying $\alpha = \pi/2$, the resulting time-one map restricted to the corresponding invariant path
will be conjugate to an irrational rotation. Thus
$\Phi_X$ does not have finite orbits.

Let us now briefly discuss the slightly more general case where $X =x^a y^b Y$ with $a,b \in \N^{\ast}$. Setting $d = am-bn$, we can suppose without
loss of generality that $d\geq 1$. Next, by considering the system
$$
\begin{cases}
\frac{dx}{dt} = mx^{a+1} y^b \\
\frac{dy}{dt} = -nx^a y^{b+1} \, ,
\end{cases}
$$
we conclude that $dy/dx = -ny/mx$ so that $y = c x^{-n/m}$ in ramified coordinates. In turn, this
yields $dx/dt = c^b mx^{1 + d/m} \partial /\partial x$. Since $d \geq 1$ by construction, the orbits of the latter
vector field defines the well-known ``petals'' associated to Leau flower in the case of periodic linear
part, cf. \cite{carlerson}. For example, setting $m=1$ to simplify,
the orbits of the vector field $x^{1+d} \partial /\partial x$ consists of $d+1$ ``petals'' in non-ramified coordinates.

In any event, the sequence of points in $\{ y=0\}$ consisting of the first coordinates of the
full orbits of $\phi_X$ either marches straight off a neighborhood of $0 \in \{ y=0\} \simeq \C$ or it converges to $(0,0)$.
Converging to $(0,0)$ will force the second coordinates of the points in the $\phi_X$-orbit to increase uniformly so that
the orbit in question must leave a fixed neighborhood of $(0,0) \in \C^2$. Summarizing, we conclude:

\noindent {\it Claim 1}. Fixed a neighborhood $U$ of $(0,0) \in \C^2$ and given $p = (x_0, y_0)$, $x_0y_0 \neq 0$, the set
$$
U \cap \left\{ \bigcup_{n=-\infty}^{\infty} \phi_X^n (p) \right\}
$$
is finite.

\vspace{0.1cm}

\noindent {\bf Example 2}: Diffeomorphisms leaving the function $(x,y) \mapsto xy$ invariant.

Let us see here two cases similar to Example~1 that can easily be realized as the holonomy of foliations as in Theorem~B. First, let $F \in \diffCtwo$ be given by
\begin{equation}
F (x,y) = [ x(1+xy f(xy)), \, y (1 +xy f(xy))^{-1}] \, , \label{forholonomy1}
\end{equation}
where $f (z)$ is a holomorphic function defined about $0 \in \C$ and satisfying $f(0) \neq 0$.
Note that $F$ leaves the function $(x,y) \mapsto xy$ invariant since the product of its first and second components
equals~$xy$.

Next, consider an initial point $(x_0,y_0)$ with $x_0y_0 =C \neq 0$. The orbit of $(x_0,y_0)$
under $F$ is hence contained in the curve defined by $\{ xy = C\}$. However, for a point $(\tilde{x} , \tilde{y})$ lying in $\{ xy = C\}$, the value of
$F (\tilde{x} , \tilde{y})$ takes on the form
$$
F (\tilde{x} , \tilde{y}) = [\tilde{x} (1 + C  f(C)) \, , \,  \tilde{y} (1 + C  f(C))^{-1} ] \, .
$$
In particular, those values of $C$ for which $\vert 1 + C  f(C) \vert =1$ give rise to a rotation in the first coordinate. Therefore the lifts
of these circles in the corresponding leaves are loops. Besides for a generic choice of $C$ satisfying $\vert 1 + C  f(C) \vert =1$ the dynamics
induced on one of these invariant loops is conjugate to an irrational rotation so that $F$ does not have finite orbits.

Consider now the local diffeomorphism $H$ which is given by
\begin{equation}
H (x,y) = [ x(1+x^2y f(x^2y)), \, y (1 +x^2y f(x^2y))^{-1}] \, , \label{forholonomy2}
\end{equation}
where $f$ is as above.
It follows again that $H$ preserves the function $(x,y) \mapsto xy$. Our purpose is to show that, unlike $F$, $H$ has
finite orbits. For this, fix again an initial point $(x_0,y_0)$ with $x_0y_0 =C \neq 0$ so that the orbit of $(x_0,y_0)$
under $H$ is contained in the curve $\{ xy = C\}$. Next note that, if $(\tilde{x} , \tilde{y})$ lies in $\{ xy = C\}$, we have
$$
H (\tilde{x} , \tilde{y}) = [\tilde{x}(1 + \tilde{x} C f( \tilde{x} C) \, , \,  \tilde{y} (1 + \tilde{x}C f(\tilde{x} C))^{-1} ] \, .
$$
The dynamics of the first component of $H$ behaves again as the Leau flower. Therefore, by resorting to an argument totally
analogous to the one employed in Example~1 for $X =x^a y^b Y$ with $d = am-bn \neq 0$, we conclude that all the orbits
of $H$ are finite as desired.

\vspace{0.1cm}

\noindent {\bf Example 3}: Resonant vector fields.

This example consists of a diffeomorphism with finite orbits that is associated to a non-linear vector field. Indeed, the previous examples
of diffeomorphisms preserved foliations admitting non-constant holomorphic first integrals. To obtain a non-linear example
possessing no holomorphic first integral, consider a resonant
vector field $Y$ about $(0,0) \in \C^2$ as in \cite{M-RamisENS}, the simplest example being
\begin{equation}
Y = x \partial /\partial x - y(1 + xy) \partial /\partial y \, . \label{bernoulli}
\end{equation}
The vector field $Y$ is not linearizable and, indeed, possesses only constant holomorphic first integrals. Next let
$X =xyY$ and denote by $\phi_X$ the time-one map induced by $X$. We shall prove the following:

\vspace{0.1cm}

\noindent {\it Claim 2}. Fixed a neighborhood $U$ of $(0,0) \in \C^2$ and given $p = (x_0, y_0)$, $x_0y_0 \neq 0$, the set
$$
U \cap \left\{ \bigcup_{n=-\infty}^{\infty} \phi_X^n (p) \right\}
$$
is finite.

\begin{proof} Considering the vector field $Y$ in~(\ref{bernoulli}), it follows that the resulting equation for $dy/dt$ is a classical
Bernoulli equation. This equation can explicitly be integrated to yield the solutions
$$
x(t) = x_0 e^t \; \; \, {\rm and} \; \; \, y(t) = \frac{y_0}{e^t (1 + x_0y_0 t)}
$$
corresponding to the integral curves of $Y$. In particular, it follows that
$$
xy = \frac{x_0y_0}{1 +x_0y_0 t} \, .
$$
Inputting the last equation into the equation for $dx/dt$ arising from the vector field $X$, it follows that $x (t) = x_0 (1 +x_0y_0 t)$.
Therefore, setting $t=1$, the map $\phi_X$ must be given by
\begin{equation}
\phi_X (x_0, y_0) = [ x_0 +x_0^2y_0 , y_0 (1 +x_0y_0)^{-2} ] \, . \label{diffeosaddlenode}
\end{equation}
Thus, whenever the iteration $\phi_X^n$ is defined, its first component equals~$x_0 + n x_0^2y_0$. Since, by assumption $x_0^2y_0 \neq 0$,
this component behaves as a non-trivial translation whose orbits necessarily march off a neighborhood of $0 \in \C$.
The claim follows at once.
\end{proof}

\vspace{0.1cm}

\noindent {\bf Example 4}: Dynamics on a pencil of elliptic curves: Invariant sets and no parabolic domain.

Let us now discuss a more elaborate case where the vector field $Y$ itself has {\it zero}\, linear part at the origin. This means that the
time-one map induced by $Y$ is already tangent to the identity. This contrasts with the previous examples for which we needed to
use vector fields with non-isolated singularities to obtain diffeomorphisms tangent to the identity.
The choice of $Y$ to be made below is such that the
leaves of the associated foliation $\fol$ have {\it more topology}\, in the sense that they are punctured elliptic curves. The influence of this
topology will significantly change the nature of the results obtained.

Consider the vector field $Y = x(x-2y) \partial /\partial x + y(y-2x) \partial /\partial y$ admitting $xy(x-y)$ as holomorphic first integral.
In particular, $Y$ possesses exactly three separatrices, namely the coordinate axes and the line $\{ x=y\}$. Also consider the vector field
$X = xy (x-y)  Y$, since vector fields vanishing identically over all its invariant curves through $(0,0) \in \C^2$ are needed to yield diffeomorphisms
tangent to the identity and having locally closed orbits. Note that the blow-up of $Y$ yields a foliation having three
linearizable singularities on the corresponding exceptional divisor. About each of these singularities, the vector field $X$
is as in Example~1, with $d=am-bn \neq 0$. In particular, the corresponding time-one map has finite orbits about each of these three singular points.
However, it will be seen that, globally, this time-one map does not have finite orbits due to the influence of the topology
of the leaves.

To describe the dynamics of $X$, let us first consider the case of $Y$. Note that the orbits of $Y$ are the fibers of the pencil induced on
$\C P(2)$ by the first integral $xy(x-y)$. Except for the tree invariant lines determined by $xy(x-y) =0$,
these fibers are smooth (projective) elliptic curves which are pairwise isomorphic since they are permuted
by the flow of radial vector field $x \partial /\partial x
+ y \partial /\partial y$. The pencil has $3$ singular points at the ``line at infinity''
$\Delta_{\infty}$ corresponding to the invariant directions $\{ x=0\}$, $\{y=0\}$ and $\{ x=y\}$ and all the
elliptic curves pass through each of these singular points.
Whereas $X$ has poles on $\Delta_{\infty}$,
a straightforward change of coordinates shows that {\it the restriction $X_L$ of
$X$ to every elliptic curve $L$}\, extends holomorphically to a vector field defined on all of $L$. Thus $X_L$ is a constant vector field on $L$. A slightly more
detailed analysis, cf. \cite{ghys-r} page 1150, shows that $X_L$ does not depend on $L$ in the sense that, if
$\sigma : L \rightarrow  L'$ is a holomorphic diffeomorphism between two elliptic curves as above, then $h^{\ast} X_{L'} = X_L$.

With the preceding information in hand, fix $L$ as above and consider now the Weierstrass representation of $L$ as the parallelogram $\mathcal{P}$ in $\C$
defined by $1$ and $\tau \in C^{\ast}$ (actually $\tau = e^{\pi \sqrt{-1}/3}$) identified to $L$ through the corresponding
Weierstrass $\wp$-function. Modulo multiplying $Y$ by a constant, the vector field induced by
$Y_L$ on $\mathcal{P}$ is simply $\partial /\partial T$, where $T$ is a global coordinate on $\C$.

Let us now fix a neighborhood $U \subset \C^2$ of the origin and consider (a connected component of) the intersection
$L_U$ of $L$ and $U$, where $L$ is an elliptic curve as above. The parameterization $\wp$ yields an identification of $L_U$ with
$\mathcal{P} \setminus (B_1 \cup B_2 \cup B_3)$ the three-times holed parallelogram $\mathcal{P}$, where the holes $B_1\, , B_2, \, B_3$ are in
natural correspondence with the invariant lines $\{ x=0\}$, $\{y=0\}$ and $\{ x=y\}$. Furthermore, when the leaf $L$ varies,
$\mathcal{P} \setminus (B_1 \cup B_2 \cup B_3)$ does not change since the leaves are pairwise isomorphic.
Nonetheless, since the first integral $xy(x-y)$ varies,
the restriction of $X$ to $L$ viewed in $\mathcal{P} \setminus (B_1 \cup B_2 \cup B_3)$
becomes a constant times the vector field $\partial /\partial T$. The value of this constant depends on the leaf and goes to
zero if the leaf is close to any of the three invariant lines.

Let us now consider the dynamics of $Z = \partial /\partial T$ on $\mathcal{P} \setminus (B_1 \cup B_2 \cup B_3)$ for real time. This dynamics
consists of horizontal (real) lines in $\mathcal{P} \setminus (B_1 \cup B_2 \cup B_3)$: those lines who intersect $B_1 \cup B_2 \cup B_3$ leave
the  neighborhood $U$ (and therefore are ``ended'' from the local point of view). The remaining lines give rise to periodic orbits. In particular,
if $f$ is the local diffeomorphism induced as time-one map of $X$, $f$ possesses fully invariant sets contained in $U$ and away from the
invariant lines $\{ x=0\}$, $\{y=0\}$ and $\{ x=y\}$.

Consider now the vector field $e^{2\pi i \alpha}Y$ where $\alpha \in \C$ is a constant. Taking advantage of the previous construction,
the new vector field $e^{2\pi i \alpha}Z$ viewed in $\mathcal{P} \setminus (B_1 \cup B_2 \cup B_3)$ coincides with
$e^{2\pi i \alpha} \partial /\partial T$. The value of $\alpha$
is chosen with irrational real part, so that the slope of the ``real direction'' of $e^{2\pi i \alpha} \partial /\partial T$ is irrational.
Therefore the real flow of
$e^{2\pi i \alpha}Z$ on $\mathcal{P}$ is a linear irrational flow all of whose orbits are dense. In particular, its restriction to
$\mathcal{P}  \setminus (B_1 \cup B_2 \cup B_3)$ is such that every orbit will eventually intersect the holes $B_1 \cup B_2 \cup B_3$ and then
leave the neighborhood $U$. Denoting by $\phi$ the time-one map induced by $e^{2\pi i \alpha}X$, it follows that the orbit of
every point in $L_U$ will intersect $L_U$ in finitely many points only (unless it ``jumps over the holes'').
In any event, the previous constructed diffeomorphisms do not have finite orbits in $U$ due to
the fact that their dynamics restricted to the invariant lines $\{ x=0\}$, $\{y=0\}$ and $\{ x=y\}$ consists of Leau flowers. As previously done, we
may then think of multiplying $cX$ by its first integral $xy(x-y)$ so as to annihilate
the dynamics over these invariant lines. This strategy, however, does not work in the present case due to the following:

\vspace{0.1cm}

\noindent {\it Claim 4}. There are leaves $L_U$ of the restriction of $cxy(x-y) X$ to $U$ containing fully invariant
sets by the dynamics of this vector field in real time.

\noindent {\it Proof}. Keeping the preceding notations, the restriction of $cxy(x-y) X$ to $\mathcal{P} \setminus (B_1 \cup B_2 \cup B_3)$ is given
by $cc_L \partial /\partial T$ where $c_L$ is a constant depending on the leaf $L$, more precisely $c_L$ equals the value of
the first integral $xy(x-y)$ over $L$. Therefore, regardless of the chosen value of $c$, there will always exist leaves $L$ over which the real
dynamics of $cxy(x-y) X$ is constituted by periodic orbits, some of them avoiding the holes $B_1 \cup B_2 \cup B_3$. The claim follows.\qed

It follows from this claim that the diffeomorphism $\phi_X$ induced as time-one map of $X = xy(x-y)Y$ has closed invariant sets not containing the
origin whereas it does not possess any parabolic domain in the sense of \cite{hakim}, \cite{abate}.


\subsection{Singular foliations and holonomy}

This paragraph contains simple examples of foliations giving rise to holonomy maps with properties similar to
the above discussed cases. These examples include the foliation introduced in the ``complement to Theorem~B''.

Let us begin by pointing out a simple observation that shows that every element of $\diffCtwo$ can be realized as a local
holonomy map for some foliation. Indeed, consider
a singular foliation $\fol$ on $(\C^3,0)$ admitting a separatrix $S$ through the origin and denote by
$h$ the holonomy map associated to $\fol$, with respect to $S$. Assume that the foliation is locally given by
the vector field $A(x,y,z) \partial /\partial x + B(x,y,z) \partial /\partial y + C(x,y,z) \partial /\partial
z$. Assume furthermore that the separatrix $S$ is given, in the same coordinates, by $\{x=0, y=0\}$. Setting $z = e^{2\pi it}$, the corresponding
holonomy map can be viewed as the time-one map associated to the differential equation
\[
\begin{cases}
\frac{dx}{dt} & = \frac{dx}{dz} \frac{dz}{dt} = 2\pi i e^{2\pi i t} \frac{A(x,y,e^{2 \pi i t})}{C(x,y,e^{2\pi it})}\\
\frac{dy}{dt} & = \frac{dx}{dz} \frac{dz}{dt} = 2\pi i e^{2\pi i t} \frac{B(x,y,e^{2 \pi i t})}{C(x,y,e^{2\pi it})}
\end{cases} \, .
\]
In the particular case where $A, \, B$ do not depend on $z$ and $C$ is reduced to $C(x,y,z) = z$, the holonomy
map of $\fol$ with respect to $S$ reduces to the time-one map induced by a vector field on $(\C^2, 0)$, namely by
the vector field
\[
2\pi i \left[ A(x,y) \frac{\partial}{\partial x} + B(x,y) \frac{\partial}{\partial y} \right] \, .
\]

Consider, for example, the local diffeomorphism $h$ introduced in Example~3 and recall that $h$ is the time-one map
induced by the vector field $Y = xy \left[x \partial /\partial x - y(1 + xy) \partial /\partial y \right]$.
To find a vector field on $(\C^3,0)$ whose
foliation has $h$ as holonomy map, it suffices to take $Y$ and ``join" the term $\frac{1}{2\pi i}z \partial
/\partial z$. Then the holonomy of the foliation associated to the vector field
\[
X = xy \left[x \frac{\partial }{\partial x} - y(1 + xy) \frac{\partial }{\partial y} \right] + \frac{1}{2pi i} z \frac{\partial }{\partial z} \, ,
\]
with respect to the $z$-axis is nothing but $h$ itself.

Note that the vector field $X$ above corresponds to a saddle-node vector field of codimension~$2$. This is equivalent to say that
its linear part admits exactly two eigenvalues equal to zero and a non-zero eigenvalue associated to the direction of the separatrix
$\{ x=y=0\}$. The fact that the holonomy of $\{ x=y=0\}$ has finite orbits is a phenomenon without analogue for saddle-nodes in
dimension~$2$.

Let us now provide three examples of foliations on $(\C^3,0)$ possessing only eigenvalues different from {\it zero}, as in the case of
Theorem~B.

\noindent {\bf Example 5}: Let $\fol$ denote the foliation associated to the vector field
\[
X = x(1 + xyz^2) \frac{\partial }{\partial x} + y(1 - xyz^2) \frac{\partial }{\partial y} - z \frac{\partial }{\partial z} \, .
\]
The $z$-axis corresponds to one of the separatrices of $\fol$. Taking $z = e^{2\pi it}$, it follows that the holonomy map $h$
associated to $\fol$, with respect to the $z$-axis, is given by the time-one map associated to the vector field
\begin{equation}\label{eqholonomy}
\begin{cases}
\frac{dx}{dt} & = \frac{dx}{dz} \frac{dz}{dt} = -2\pi i x(1 + e^{4\pi it}xy)\\
\frac{dy}{dt} & = \frac{dx}{dz} \frac{dz}{dt} = -2\pi i y(1 - e^{4\pi it}xy)
\end{cases} \, .
\end{equation}
To solve this system of differential equations, we should consider the series expansion of $(x(t),y(t))$ in terms of the
initial condition. More precisely, if $(x(0), y(0)) = (x_0, y_0)$, then we should let $x(t) = \sum a_{ij}(t) x_0^i y_0^j$
and $y(t) = \sum b_{ij}(t) x_0^i y_0^j$. Clearly $a_{10}(0) = b_{01}(0) = 1$ and $a_{ij} = b_{ij}(0) = 0$ in the other
cases. Substituting the series expansion of $x(t)$ and $y(t)$ on~(\ref{eqholonomy}) and comparing the same powers on the
initial conditions, it can be said that the system~(\ref{eqholonomy}) induces an infinite number of differential equations
involving the functions $a_{ij}, \, b_{ij}$ and their derivatives. Each one of the differential equation takes the form
\begin{eqnarray*}
a_{ij}^{\prime}(t) & = & -2\pi i  \left[ a_{ij}(t) + \sum e^{4\pi it} a_{p_1q_1}(t) a_{p_2q_2} (t) b_{p_3q_3}(t) \right] \\
b_{ij}^{\prime}(t) & = & -2\pi i  \left[ b_{ij}(t) + \sum e^{4\pi it} a_{p_1q_1}(t) b_{p_2q_2} (t) b_{p_3q_3}(t) \right]
\end{eqnarray*}
where $p_1 + p_2 +p_3 =i$ and $q_1 + q_2 + q_3 = j$. In particular, the terms on the sum in the right hand side of the equation
above involves only coefficients of the monomials $x_0^p y_0^q$ of degree less then $i + j$ and such that $p \leq i$
and $q \leq j$. Computing this holonomy map becomes much easier with the following lemma:
\begin{lemma}
\label{preservingxy}
The holonomy map $h$ preserves the function $(x,y) \mapsto xy$.
\end{lemma}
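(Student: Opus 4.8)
The plan is to verify directly that the product $w(t) = x(t)\,y(t)$ is invariant under one full turn of the holonomy, by computing its derivative along the solutions of the holonomy system~(\ref{eqholonomy}). First I would fix a solution $(x(t), y(t))$ of~(\ref{eqholonomy}) with initial condition $(x_0, y_0) = (x(0), y(0))$, set $w = xy$, and compute $w'$ via the product rule.

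Carrying out the differentiation gives
\[
\frac{dw}{dt} = \frac{dx}{dt}\, y + x \, \frac{dy}{dt} = -2\pi i\, xy\,(1 + e^{4\pi i t}\, xy) - 2\pi i \, xy\,(1 - e^{4\pi i t}\, xy).
\]
The key observation is that the two terms $\pm e^{4\pi i t}\, xy$ inside the brackets have opposite sign and cancel, so that the right-hand side collapses to $-4\pi i\, xy = -4\pi i\, w$. Hence $w$ satisfies the elementary linear equation $w' = -4\pi i\, w$, whose solution is $w(t) = w(0)\, e^{-4\pi i t}$.

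Finally I would evaluate at $t = 1$, which corresponds to one full turn of the variable $z = e^{2\pi i t}$ around the origin and therefore to a single application of the holonomy map $h$. Since $e^{-4\pi i} = 1$, we obtain $w(1) = w(0)$, that is $x(1)\,y(1) = x_0\,y_0$. Because $h(x_0, y_0) = (x(1), y(1))$ by the very definition of the holonomy as the time-one map of~(\ref{eqholonomy}), this shows that $(x,y) \mapsto xy$ is preserved by $h$, as claimed.

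I do not expect any genuine obstacle here: the entire argument rests on the elementary cancellation above, which is in turn forced by the fact that the perturbations $+xyz^2$ and $-xyz^2$ in the $x$- and $y$-components of $X$ carry opposite signs. The only point deserving a word of care is that the invariance is \emph{exact} only after the full period $t=1$ --- the product $xy$ genuinely rotates for intermediate times --- but this is immediate from $e^{-4\pi i} = 1$, and it is precisely this feature that will make the explicit computation of $h$ tractable in the subsequent analysis.
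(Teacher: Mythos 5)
Your proof is correct and is essentially identical to the paper's: both compute $\frac{d}{dt}(xy)$ along solutions of the holonomy system, observe the cancellation of the $\pm e^{4\pi i t}xy$ terms to get $\frac{d}{dt}(xy) = -4\pi i\, xy$, integrate to $(xy)(t) = x_0 y_0 e^{-4\pi i t}$, and conclude by evaluating at integer time where $e^{-4\pi i t} = 1$. Your closing remark that the invariance is exact only at the full period is a sound observation, implicit in the paper's phrasing as well.
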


\begin{proof}
To check that the level sets of $(x,y) \mapsto xy$ are preserved by $h$, consider the derivative of the product $x(t) y(t)$ with respect
to $t$. This gives us
\begin{align*}
\frac{d}{dt} (xy) &= \frac{dx}{dt}  y + x  \frac{dy}{dt} \\
&= - \left[ 2\pi i x(1 + e^{4\pi it}xy) \right] y - x \left[ 2\pi i y(1 - e^{4\pi it}xy) \right] \\
&= -4\pi i xy \, .
\end{align*}
Thus, by integrating the previous differential equation with respect to the product $xy$, we obtain
\[
(xy)(t) = x_0y_0 e^{-4 \pi i t} \, .
\]
Since the holonomy map corresponds to the time-one map of the system of differential equations~(\ref{eqholonomy}) and
since $e^{-4 \pi i t} = 1$ for all $t \in \Z$, it follows that the orbits of $h$ are contained in the level sets of $(x,y) \mapsto xy$ as desired.
\end{proof}

Lemma~\ref{preservingxy} implies that it suffices to determine the first coordinate of $h$. By recovering the preceding non-autonomous system
of differential equations, a simple induction argument on the value of $i+j$ shows that $h$ has the form
\begin{equation}\label{eqexpressionH}
h(x,y) = (x(1 + xy f(xy)), y(1 + xy f(xy))^{-1}) \, ,
\end{equation}
where $f$ represents a holomorphic function of one complex variable such that $f(0) = 2\pi i$ (the expression for the second coordinate
of $h$ is obtained from the first coordinate by means of Lemma~\ref{preservingxy}). The resulting diffeomorphism $h$ is clearly non-periodic
but does have invariant sets giving by ``circles''. Besides on some of these invariant ``circles'' the dynamics is conjugate to an irrational
rotation, cf. Example~2.

\vspace{0.1cm}

\noindent {\bf Example 6}: {\sc Complement to Theorem B}.

Let $\fol$ denote the foliation associated to the vector field
\[
X = x(1 + x^2yz^3) \frac{\partial }{\partial x} + y(1 - x^2yz^3) \frac{\partial }{\partial y} - z \frac{\partial }{\partial z} \, .
\]
Again the $z$-axis corresponds to one of the separatrices of $\fol$.
Taking $z = e^{2\pi it}$, it follows that the holonomy map $h$
associated to $\fol$, with respect to the $z$-axis, is given by the time-one map associated to the vector field
\begin{equation}\label{eqholonomy2}
\begin{cases}
\frac{dx}{dt} & = \frac{dx}{dz} \frac{dz}{dt} = -2\pi i x(1 + e^{6\pi it}x^2y)\\
\frac{dy}{dt} & = \frac{dx}{dz} \frac{dz}{dt} = -2\pi i y(1 - e^{6\pi it}x^2y)
\end{cases} \, .
\end{equation}
The same argument employed in Lemma~\ref{preservingxy} shows that, again, the holonomy map $h$ in question preserves
the level sets of the function $(x,y) \mapsto xy$. To solve the corresponding system of equations, we then consider again the series expansion
of $(x(t) ,y(t))$ in terms of the initial condition. Let then $x(t) = \sum a_{ij} (t) x_0^i y_0^j$ and $y(t) = \sum b_{ij} (t) x_0^i y_0^j$, where
$a_{10} (0) = b_{01} (0) =1$ and $a_{ij} (0) =b_{ij} (0) =0$ in the remaining cases. It can immediately be checked that the functions
$a_{ij}, b_{ij}$ vanish identically for $2 \leq i+j \leq 3$. As to the monomials of degree~$4$, it can similar be checked that they all
vanish identically except $a_{31} (t)$ and $b_{22} (t)$. In fact, the latter functions satisfy
\begin{equation}
\begin{cases}
a_{31}^{\prime} (t)& =  -2\pi i [ a_{31} (t) +  e^{6\pi i} a_{10}^3 (t) b_{01} (t)] \\
b_{22}^{\prime} (t)& = -2\pi i [ b_{22} (t) - e^{6\pi i} a_{10}^2 (t) b_{01}^2 (t)]
\end{cases} \, .
\end{equation}
ensuring that $a_{31} (t) = -2\pi i t e^{-2\pi it}$ whereas $b_{22} (t) = 2\pi i t e^{-2\pi it}$. In particular, $a_{31} (1) = -2\pi i$ and
$b_{22} (1) = 2\pi i$. By using induction on $i+j$ (and keeping in mind that $h$ preserves the function $(x,y) \mapsto xy$), it can
be shown that $h$ takes on the form
\begin{equation}\label{eqexpressionH2}
h(x,y) = (x(1 + x^2y f(x^2y)), y(1 + x^2y f(x^2y))^{-1}) \; \; \, {\rm with} \; \; \, f(0) =2\pi i \, .
\end{equation}
It follows from the discussion in Example~2 that this diffeomorphism possesses finite orbits whereas it is clearly non-periodic.
This finishes the proof of the complement to Theorem~B.

\vspace{0.1cm}

\noindent {\bf Example 7}: A dicritical holonomy map.

Let us finish this section with an example of foliation whose holonomy map has a peculiar property related to ``dicritical vector fields''.
As it will follow from our discussion based on a result due to Abate \cite{abate}, the mentioned holonomy map will possess infinite orbits
and, in fact, the basin of attraction of the origin will have non-empty interior. We start with the following vector field
\[
X = x(1 + xyz^2) \frac{\partial }{\partial x} + y(1 + xyz^2) \frac{\partial }{\partial y} - z \frac{\partial }{\partial z} \, .
\]
Compared to Example~5, the reader will note the change of a sign in the second coordinate. The system leading to the holonomy map
associated to the $z$-axis, becomes
\begin{equation}\label{eqholonomy3}
\begin{cases}
\frac{dx}{dt} & = \frac{dx}{dz} \frac{dz}{dt} = -2\pi i x(1 + e^{4\pi it}xy)\\
\frac{dy}{dt} & = \frac{dx}{dz} \frac{dz}{dt} = -2\pi i y(1 + e^{4\pi it}xy)
\end{cases} \, .
\end{equation}
Considering the series expansions
$x(t) = \sum a_{ij} (t) x_0^i y_0^j$ and $y(t) = \sum b_{ij} (t) x_0^i y_0^j$, where
$a_{10} (0) = b_{01} (0)=1$, it can be show that terms $a_{ij}, \, b_{ij}$ with $i+j =2$ vanish identically. As to $i+j=3$, the only
non-identically zero functions are $a_{21}$ and $b_{12}$ which, in turn, satisfy the equations
\begin{equation}
\begin{cases}
a_{21}^{\prime} (t) & =  -2\pi i [ a_{21} (t) +  e^{4\pi i} a_{10}^2 (t) b_{01} (t)] \\
b_{12}^{\prime} (t) & = -2\pi i [ b_{12} (t) + e^{4\pi i} a_{10} (t) b_{01}^2 (t)]
\end{cases} \, .
\end{equation}
yielding $a_{21} (t)= b_{12} (t) = -2\pi i t e^{-2\pi it}$. In particular, $a_{21} (1) = b_{12} (1) = -2\pi i$. Therefore
$$
h (x,y) = (x,y) + xy (-2\pi i x, -2\pi i y) + \cdots \, 
$$
where the dots stand for higher order terms.
The fact that the first non-zero homogeneous component of $h(x,y)-(x,y)$ is a multiple of $(x,y)$ means that $h$ is
{\it dicritical}\, in the terminology of \cite{abate}. This fact is equivalent to saying that the infinitesimal generator
of $h$ is a formal {\it dicritical} vector field, namely its first non-zero homogeneous component is a multiple of the radial
vector field, cf. Section~4. In \cite{abate}, it is proved that these vector fields possess uncountably many ''parabolic domains''
implying, in particular, that the basin of attraction of the origin, with respect to $h$, has non-empty interior.


\section{Solvable and Pseudo-solvable subgroups of $\formdiffn$}

As already mentioned, this section owns a large deal to the paper \cite{ghysBSBM} by E. Ghys and deals with his notion of
``pseudo-solvable groups''. As far as Theorem~C is concerned, the purpose of this section is to show that a ``pseudo-solvable''
subgroup of $\diffCtwo$ is, indeed, solvable. In dimension~$1$ the corresponding result is established in \cite{ghysBSBM}.
The argument presented in this section roughly follows Ghys's strategy in the one-dimensional case although it becomes far more
involved due to the possible existence of non-constant first integrals and to the existence of rank~$2$ abelian groups: these
phenomena have no one-dimensional analogue. In fact, material
involving formal diffeomorphisms, vector fields and solvable groups are widely developed in dimension~$1$, cf. for example \cite{nakai}, \cite{russians}
and \cite{cerveaumoussu}, but not so much in higher dimensions as basic
results such as Lemmas~\ref{commuting1} and~\ref{commuting2} are hardly found in the literature. In the course of the discussion we shall
also provide detailed information on the algebraic structure of solvable subgroups $\diffCtwo$ and the corresponding material may have
interest beyond the use made in this paper.

Let us first recall the definition of pseudo-solvable groups. Suppose that
$G$ is a subgroup of $\diffCtwo$ generated by a finite set $S \subset \diffCtwo$. To the
generating set $S$ it is associated
a sequence of sets $S(j) \subseteq G$ as follows: $S(0) =S$ and $S(j+1)$ is the set whose elements are the
commutators written under the form $[F_1^{\pm 1} ,F_2^{\pm 1}]$ where $F_1 \in S(j)$ and $F_2 \in S(j) \cup S(j-1)$ ($F_2 \in S(0)$ if $j=0$).
The group $G$ is said to be pseudo-solvable if (for some generating set $S$ as above), the sequence $S(j)$
becomes reduced to the identity for $j$ large enough. As mentioned, this section is wholly devoted to proving the following:

\begin{prop}
\label{commuting9}
A pseudo-solvable subgroup $G$ of $\diffCtwo$ is necessarily solvable.
\end{prop}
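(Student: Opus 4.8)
The plan is to transport the problem from the group $\diffCtwo$ to the Lie algebra $\ghatX$ of formal vector fields whose $1$-jet vanishes, and to exploit the fact that for germs tangent to the identity the group commutator is controlled by the Lie bracket, which strictly raises the vanishing order. Concretely, each $F\in\diffCtwo$ is the time-one map $\exp(X_F)$ of a unique infinitesimal generator $X_F\in\ghatX$; writing $\nu(X)$ for the order of a formal vector field (the degree of its lowest nonzero homogeneous component), one has $\nu(X_F)\geq 2$ and the bracket estimate $\nu([X,Y])\geq\nu(X)+\nu(Y)-1$. Through the Campbell--Hausdorff formula, the generator of the group commutator satisfies $X_{[F,G]}=[X_F,X_G]+(\text{iterated brackets of strictly higher order})$, so that $\nu(X_{[F,G]})\geq\nu(X_F)+\nu(X_G)-1$ and the leading homogeneous component of $X_{[F,G]}$ is exactly the Lie bracket of the leading components of $X_F$ and $X_G$ whenever this bracket is nonzero. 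In particular every step of the commutator tree $S(j)$ raises $\nu$ by at least one.

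With this dictionary I would reformulate pseudo-solvability through the filtration of $\ghatX$ by order and its associated graded Lie algebra. Since pseudo-solvability asserts that $S(j)$ is reduced to the identity for $j$ large, the corresponding high-depth iterated commutators have generator equal to zero; read off in the graded algebra, this begins to constrain the iterated Lie brackets of the leading components of the generators, although genuine cancellations at lower order must be tracked carefully rather than being ruled out a priori. The core of the argument is then the structure theory of commuting and solvable families of formal vector fields in dimension $2$ assembled earlier in this section, in particular Lemmas~\ref{commuting1} and~\ref{commuting2}: these describe the possible configurations of commuting pairs in $\ghatX$ and the shape of a solvable family, thereby providing the normal forms into which a pseudo-solvable $G$ must fit.

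The proof then proceeds by induction, mirroring Ghys's one-dimensional scheme in \cite{ghysBSBM} but adapted to the richer setting. The strategy is to argue that a pseudo-solvable $G$ is forced to preserve additional structure --- a common formal first integral, a shared foliation, or a distinguished abelian subgroup governing the leading-order behaviour --- and that in each case $G$ embeds into an explicitly solvable model, so that the termination of the specific tree $S(j)$ propagates to termination of the full derived series $G^{(k)}$. The induction is naturally organized on the depth at which $S(j)$ becomes trivial, with the order filtration used at each stage to peel off the lowest-order contributions.

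I expect the main obstacle to be precisely the two phenomena with no one-dimensional counterpart that are flagged in the Introduction: the possible existence of a non-constant (formal or holomorphic) first integral and the presence of rank-$2$ abelian subgroups. In dimension $1$ commuting fields are automatically proportional and a bare leading-order analysis essentially closes the argument; in dimension $2$ a rank-$2$ abelian configuration can have leading components that appear to bracket nontrivially, so that the interaction between the order filtration and these abelian pieces must be unravelled with care. Establishing that the mere termination of the combinatorial tree $S(j)$ genuinely forces solvability of $G$ --- rather than only of the tree itself --- in the presence of these degeneracies is where the bulk of the technical work in Proposition~\ref{commuting9} will lie.
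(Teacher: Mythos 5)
Your frame---infinitesimal generators, the Campbell--Hausdorff formula, the order estimate that a commutator of elements tangent to the identity has strictly larger contact order (Remark~\ref{obs1.1}), and an adaptation of Ghys's scheme---coincides with the paper's. But what you have written is a roadmap whose central step is left undone, and you say so yourself: the deduction that termination of the tree $S(j)$ forces solvability of all of $G$ is exactly the point you defer to ``where the bulk of the technical work will lie.'' No mechanism is offered for extracting the ``additional preserved structure'' from the hypothesis. The paper's mechanism is a specific downward induction absent from your outline: set $G(j)=\langle S(j)\rangle$ and $\overline{G}(j,j-1)=\langle S(j)\cup S(j-1)\rangle$, let $m$ be the \emph{smallest} index for which $\overline{G}(m,m-1)$ is solvable, and suppose $m\geq 2$. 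Since each $F\in S(m-2)$ satisfies $F^{\pm 1}\circ G(m)\circ F^{\mp 1}\subset \overline{G}(m,m-1)$, and since contact orders are preserved by conjugation, $F$ is forced to fix the distinguished vector field $X$ attached to the solvable group $\overline{G}(m,m-1)$, hence to lie in the centralizer of ${\rm Exp}\,(tX)$ with infinitesimal generator $aX+bY$; a further argument pins $b$ to a constant multiple of one fixed first integral, whence $\overline{G}(m-1,m-2)$ is itself solvable, contradicting minimality of $m$.

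The inputs needed for that descent go well beyond Lemmas~\ref{commuting1} and~\ref{commuting2}, which are the only structural results you invoke. One needs: (i) Proposition~\ref{commuting8}, that a solvable non-abelian subgroup of $\formdiffn$ is metabelian with non-trivial center equal to its derived group and contained in ${\rm Exp}\,(tX)$ for a single $X$; (ii) the normalizer computations (Lemmas~\ref{lastversionLemma3} and~\ref{commuting7nowlemma}), in particular that the normalizer of a rank-$2$ abelian group inside the linear span of two commuting non-parallel fields is itself abelian---this is precisely what neutralizes the rank-$2$ phenomenon you correctly flag as the obstruction with no one-dimensional analogue, and your proposal contains no substitute for it; and (iii) Lemma~\ref{formerclaim2}, the criterion that the commutator of ${\rm Exp}\,(a_1X+b_1Y)$ and ${\rm Exp}\,(a_2X+b_2Y)$ has generator parallel to $X$ if and only if $b_1/b_2$ is constant, which is what converts ``commutators land in ${\rm Exp}\,(tX)$'' into a normal form for the elements of $S(m-2)$. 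Finally, the graded-Lie-algebra reading you propose (pseudo-solvability constraining iterated brackets of \emph{leading} components) is a route the paper deliberately avoids, for the reason you concede: the leading bracket can vanish by cancellation while the group commutator is non-trivial, so nothing about the associated graded algebra follows directly from $S(j)=\{{\rm id}\}$. The paper's centralizer/normalizer arguments work with the exact generators throughout. As it stands your text correctly identifies the setup and the difficulties but does not close the gap between termination of the specific tree $S(j)$ and solvability of $G$, which is the entire content of Proposition~\ref{commuting9}.
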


To begin the approach to Proposition~\ref{commuting9}, let $\formalC$ denote the space of formal series in the variables $x,y$. Similarly
$\fieldC$ will stand for  field of fractions (or field of quotients) of $\formalC$.
Let $\ghatXone$ denote the set of formal vector fields at $(\C^2,0)$. This means that an element (formal vector field) in $\ghatXone$ has the form
$a(x,y) \partial /\partial x + b(x,y) \partial /\partial y$ where $a, \, b \in \formalC$. The space of formal vector fields whose first jet at the origin
vanishes is going to be denoted by $\ghatX$.
Formal vector fields as above act as derivations on $\formalC$ by the formula
$X_{\ast} f = df. X \in \ghatXone$, where $f \in \formalC$ and $X \in \ghatXone$. This action can naturally be iterated so that $(X)^k_{\ast} f$
is inductively defined by $X_{\ast} [ (X)^{k-1}_{\ast} f]$ for $k \in \N$. By means of definition, we also set $(X)^0_{\ast} f =f \in \formalC$.

Next, let $t \in \C$ and $X \in \ghatXone$ be fixed. The {\it exponential of $X$ at time-$t$}, $\exp (tX)$, can be defined as the operator
from $\formalC$ to itself given by
\begin{equation}
\exp (tX) (h) = \sum_{j=0}^{\infty} \frac{t^j}{j!} (X)^j_{\ast} h \, . \label{betterasformula}
\end{equation}
Naturally $\exp (0.X)$ is the identity operator and $\exp (t_1 X) \circ \exp (t_2X) = \exp ((t_1+t_2)X)$.

Recall that the order of a function (or vector field) at the origin is
the degree of its first non-zero homogeneous component. Suppose then
that $X \in \ghatX$ so that $X =a(x,y) \partial /\partial x + b(x,y) \partial /\partial y$ where the orders of both $a, \, b$
at $(0,0) \in \C^2$ are at least~$2$. It then follows that the order of $X_{\ast} h$ is strictly greater
than the order of $h$ itself. In particular, for $h=x$, we conclude that
\begin{equation}
\exp (tX) (x) = x + t.a(x,y) + \cdots \; \; \, {\rm and} \; \; \,  \exp (tX) (y) = y + t.b(x,y) + \cdots \label{previousformula}
\end{equation}
where the dots stand for terms whose degrees in $x,y$ are strictly greater than the order of $a$ (resp. $b$) at the origin.
Therefore, for every $X \in \ghatX$ and every $t \in \C$, the pair of formal
series $(\exp (tX)(x), \exp (tX) (y))$ can be viewed as an element of $\formdiffn$, namely the group of formal diffeomorphisms
of $(\C^2,0)$ that are tangent to the identity at the origin. If the vector field $X$ happens to be holomorphic, as opposed to formal,
then $(\exp (tX)(x), \exp (tX) (y))$
is an actual diffeomorphism tangent to the identity and coinciding with the diffeomorphism induced by the local flow of $X$ at time~$t$.
Next, by letting ${\rm Exp} \, (X) = (\exp (X)(x), \exp (X) (y))$, and more generally, ${\rm Exp} \, (tX) = (\exp (tX)(x), \exp (tX) (y))$,
the following well-known lemma holds:
\begin{lemma}
\label{correspondence}
The map ${\rm Exp}$ settles a bijection between $\ghatX$ and $\formdiffn$.
\end{lemma}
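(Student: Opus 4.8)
The plan is to construct the inverse of ${\rm Exp}$ by a filtration argument carried out degree by degree, which produces injectivity and surjectivity simultaneously. Write a formal vector field $X \in \ghatX$ as $X = X_2 + X_3 + \cdots$, where $X_k = a_k\, \partial/\partial x + b_k\, \partial/\partial y$ has homogeneous coefficients $a_k, b_k$ of degree $k$ (so $X \in \ghatX$ exactly says the expansion starts at $k=2$). Likewise identify a formal diffeomorphism $\phi \in \formdiffn$ with the pair $(P,Q) = (\phi_1 - x, \phi_2 - y)$ of formal series of order at least $2$, decomposed as $(P,Q) = \sum_{k\geq 2}(P_k, Q_k)$ into homogeneous components. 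The goal is then to show that the assignment $X \mapsto ({\rm Exp}(X)(x)-x,\, {\rm Exp}(X)(y)-y)$ is a bijection onto such pairs.

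First I would record the filtration estimate already implicit in~\eqref{previousformula}: since $X \in \ghatX$ raises orders strictly, each iterate $(X)^j_{\ast} x$ has order at least $1+j$, so only finitely many terms of the series~\eqref{betterasformula} contribute to a fixed degree and ${\rm Exp}(X)$ is a well-defined element of $\formdiffn$ (tangent-to-identity formal maps are automatically invertible). The crucial refinement is a triangularity statement: the degree-$k$ homogeneous part of ${\rm Exp}(X)(x)-x$ equals $a_k + A_k(X_2,\ldots,X_{k-1})$, and symmetrically the degree-$k$ part of ${\rm Exp}(X)(y)-y$ equals $b_k + B_k(X_2,\ldots,X_{k-1})$, where $A_k, B_k$ are universal polynomial expressions in the strictly lower-degree components of $X$ arising from the terms $j\geq 2$ of the exponential. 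Granting this, the system is triangular with the identity on the diagonal, so given $\phi \leftrightarrow (P,Q)$ one determines $X$ recursively: set $X_2 = P_2\,\partial/\partial x + Q_2\,\partial/\partial y$, and having found $X_2,\ldots,X_{k-1}$, define $X_k$ by $a_k = P_k - A_k(X_2,\ldots,X_{k-1})$ and $b_k = Q_k - B_k(X_2,\ldots,X_{k-1})$. By construction this $X$ satisfies ${\rm Exp}(X)=\phi$ in every degree, giving surjectivity; and since each $X_k$ is forced by $(P,Q)$ together with the previously determined components, $X$ is unique, giving injectivity. Equivalently, filtering $\ghatX$ by coefficient order ($\mathcal{X}_k$) and $\formdiffn$ by the order of $\phi-\mathrm{id}$ ($G_k$), the map ${\rm Exp}$ is a filtered map inducing the identity on each associated graded piece $\mathcal{X}_k/\mathcal{X}_{k+1} \cong G_k/G_{k+1}$, and a filtered map between complete filtered objects that is a graded isomorphism is bijective.

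The only genuinely technical point, and the one I expect to be the main obstacle to present cleanly, is the triangularity claim, namely that for $j\geq 2$ the degree-$k$ part of $(X)^j_{\ast} x$ depends solely on $X_2,\ldots,X_{k-1}$. This is a bookkeeping of degrees: a monomial of $(X)^j_{\ast} x$ obtained by selecting homogeneous components $X_{i_1},\ldots,X_{i_j}$ has degree $1 + \sum_{l}(i_l - 1)$, since each application of $X_{i,\ast}$ differentiates once (lowering degree by $1$) and multiplies by a coefficient of degree $i$. Requiring this to equal $k$ forces $\sum_l i_l = k + j - 1$; as each $i_l \geq 2$ and there are $j \geq 2$ of them, the largest possible single index is $i_l \leq (k+j-1) - 2(j-1) = k + 1 - j \leq k-1$. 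Thus no component $X_k$ (or higher) can occur for $j\geq 2$, which is precisely what makes the recursion well-posed. This estimate is elementary once the grading is fixed, and together with the convergence remark it reduces the whole lemma to the formal inverse of a unipotent triangular system.
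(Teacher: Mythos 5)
Your proof is correct and takes essentially the same approach as the paper: both decompose $X$ into homogeneous components and observe that the equation ${\rm Exp}\,(X) = F$ is triangular degree by degree, with the degree-$k$ part of ${\rm Exp}\,(X)(x)-x$ equal to $a_k$ plus a universal expression in the strictly lower-degree components, so that $X$ is determined uniquely by recursion. Your explicit degree count for the terms $j \geq 2$ simply makes precise the triangularity that the paper encodes by writing $R_{m+1}$ as the degree-$(m+1)$ component of $\sum_{j=2}^m (Z_m)^j(x)/j!$ with $Z_m$ the truncation of $X$ at degree $m$.
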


\begin{proof}
In the sequel $p_n (x,y), \, q_n (x,y) , \, a_n (x,y), \, b_n (x,y)$ denote homogeneous polynomials of degree~$n$ in the variables
$x,y$. Let $F \in \formdiffn$ be given by $F(x,y) = (x + \sum_{n=2}^{\infty} p_n (x,y) , y + \sum_{n=2}^{\infty} q_n (x,y))$. Similarly
consider a vector field $X \in \ghatX$ given as
$$
X = \sum_{n=2}^{\infty} \left[ a_n (x,y) \frac{\partial}{\partial x} +  b_n (x,y) \frac{\partial}{\partial y} \right] \, .
$$
The equation ${\rm Exp}\, (X) =F$ amounts to $p_{m+1} = a_{m+1} + R_{m+1} (x,y)$ and
$q_{n+1} = b_{n+1} + S_{m+1} (x,y)$ where $R_{m+1} (x,y)$ (resp. $S_{m+1} (x,y)$) stands for the homogeneous component
of degree~$m+1$ of the vector field
$$
\sum_{j=2}^m \frac{1}{j!} (Z_m)^j (x)
$$
(resp. of $\sum_{j=2}^m (Z_m)^j (y) /  j!$), where $Z_m = \sum_{n=2}^{m} [ a_n (x,y) \partial /\partial x +  b_n (x,y) \partial /\partial y ]$.
These equations show that, given $F \in \formdiffn$, there is one unique $X \in \ghatX$ such that
${\rm Exp} \, (X) =F$. The lemma is proved.
\end{proof}

For $F \in \formdiffn$, recall that the formal vector field $X$ satisfying ${\rm Exp} \, (X) =F$ is called the {\it infinitesimal generator of $F$}.
The notation $X = \log \, (F)$ may also be used to state that $X$ is the infinitesimal generator of $F \in \formdiffn$.
Note that, in general,
the series of $X$ is not convergent even when $F$ is an actual holomorphic diffeomorphism. Now, we have:

\begin{lemma}
\label{commuting1}
Two elements $F_1,\, F_2$ in $\formdiffn$ commute if and only if so do their infinitesimal generators $X_1, \, X_2$.
\end{lemma}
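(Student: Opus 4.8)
The plan is to transport the statement into the algebra of operators on $\formalC$ that has just been set up, and there to reduce commutation of the two time-one maps to commutation of their infinitesimal generators by means of the standard adjoint identity $e^{B}Ae^{-B}=e^{{\rm ad}_{B}}A$. First I would record the dictionary between $\formdiffn$ and operators. For $X\in\ghatX$ the operator $\exp(X_{\ast})$ of (\ref{betterasformula}) is a ring automorphism of $\formalC$, being the exponential of the derivation $X_{\ast}$; and since it sends $x$ and $y$ to the two components of ${\rm Exp}(X)$, it is precisely the substitution operator $h\mapsto h\circ{\rm Exp}(X)$. Writing $F_{i}={\rm Exp}(X_{i})$ and $\theta_{F}(h)=h\circ F$, a direct check gives $\theta_{F_{1}\circ F_{2}}=\theta_{F_{2}}\circ\theta_{F_{1}}$, and $\theta$ is injective because $\theta_{F}(x)$ and $\theta_{F}(y)$ recover $F$ (this is the content of Lemma~\ref{correspondence}). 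Consequently $F_{1}\circ F_{2}=F_{2}\circ F_{1}$ is equivalent to the commutation of the two operators $\exp(X_{1\ast})$ and $\exp(X_{2\ast})$ on $\formalC$.

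Next I would remove the exponentials by conjugation. Since $\exp(X_{2\ast})$ is invertible, $\exp(X_{2\ast})\exp(X_{1\ast})\exp(-X_{2\ast})=\exp\!\big(\exp(X_{2\ast})\,X_{1\ast}\,\exp(-X_{2\ast})\big)=\exp\!\big(\exp({\rm ad}_{X_{2\ast}})X_{1\ast}\big)$, the inner operator being again a derivation, namely the one attached to the formal vector field $\exp({\rm ad}_{X_{2}})X_{1}$. Both $X_{1\ast}$ and this conjugate strictly raise the order of formal series (elements of $\ghatX$ have order $\geq 2$, so $X_{2\ast}$, hence ${\rm ad}_{X_{2\ast}}$, raises order by at least one, exactly the filtration that made $\exp(tX)$ meaningful in the excerpt). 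The formal logarithm therefore inverts the formal exponential on such order-raising derivations, so $\exp$ is injective on them. Using this injectivity, the operators $\exp(X_{1\ast})$ and $\exp(X_{2\ast})$ commute if and only if $\exp(X_{2\ast})\exp(X_{1\ast})\exp(-X_{2\ast})=\exp(X_{1\ast})$, which amounts to $\exp({\rm ad}_{X_{2\ast}})X_{1\ast}=X_{1\ast}$.

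Finally I would convert this fixed-point equation into a genuine vanishing of the bracket. Put $\Phi(z)=(e^{z}-1)/z=\sum_{k\geq 0}z^{k}/(k+1)!$, so that $\exp({\rm ad}_{X_{2\ast}})-{\rm Id}={\rm ad}_{X_{2\ast}}\,\Phi({\rm ad}_{X_{2\ast}})$. Because ${\rm ad}_{X_{2\ast}}$ raises order, the series $\Phi({\rm ad}_{X_{2\ast}})$ converges as a formal operator, has leading term ${\rm Id}$, commutes with ${\rm ad}_{X_{2\ast}}$, and is therefore invertible. Hence $\exp({\rm ad}_{X_{2\ast}})X_{1\ast}=X_{1\ast}$ is equivalent to $\Phi({\rm ad}_{X_{2\ast}})\,{\rm ad}_{X_{2\ast}}X_{1\ast}=0$, and invertibility of $\Phi({\rm ad}_{X_{2\ast}})$ forces ${\rm ad}_{X_{2\ast}}X_{1\ast}=0$, i.e. $[X_{2},X_{1}]_{\ast}=0$, i.e. $[X_{1},X_{2}]=0$. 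Chaining the three equivalences proves the lemma. The step I expect to require the most care is the bookkeeping of the last two paragraphs: one must verify that $X_{i\ast}$ and ${\rm ad}_{X_{i\ast}}$ genuinely increase order (so that $\exp$, the formal $\log$, and $\Phi({\rm ad}_{X_{2\ast}})$ are all well-defined convergent operators), and that this same filtration yields both the injectivity of $\exp$ on order-raising derivations and the invertibility of $\Phi({\rm ad}_{X_{2\ast}})$. Everything else is the formal Lie calculus $e^{B}Ae^{-B}=e^{{\rm ad}_{B}}A$ together with the injectivity already furnished by Lemma~\ref{correspondence}.
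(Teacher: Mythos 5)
Your proof is correct, but it takes a genuinely different route from the paper's. The paper proves the nontrivial direction by computing, via the Campbell--Hausdorff formula, the infinitesimal generator $Z$ of the group commutator $F_1\circ F_2\circ F_1^{-1}\circ F_2^{-1}$: it obtains $Z=[X_1,X_2]+\frac{1}{8}[[X_1,X_2],[X_2,X_1]]+\cdots$ and then argues that, since $X_1,X_2\in\ghatX$ have order at least~$2$, all higher iterated brackets have order strictly greater than the order $k$ of the first nonzero homogeneous component $Y_k$ of $[X_1,X_2]$, so $Z=Y_k+{\rm h.o.t.}\neq 0$ and the commutator cannot be the identity. You avoid Campbell--Hausdorff altogether: after transporting the statement to substitution operators on $\formalC$ (correctly handling the anti-homomorphism $\theta_{F_1\circ F_2}=\theta_{F_2}\circ\theta_{F_1}$, which is immaterial for commutation), you use only the elementary adjoint identity $\exp(B)A\exp(-B)=\exp({\rm ad}_B)A$, the injectivity of ${\rm Exp}$ on $\ghatX$ (Lemma~\ref{correspondence}), and the invertibility of $\Phi({\rm ad}_{X_{2\ast}})$ with $\Phi(z)=(e^z-1)/z$. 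The filtration facts you flag as the delicate points do hold: a field in $\ghatX$ has coefficients of order at least~$2$, so $X_\ast$ raises the order of a series by at least one and the bracket of two such fields satisfies ${\rm ord}\,[X,Y]\geq {\rm ord}\,X+{\rm ord}\,Y-1$; consequently $\exp({\rm ad}_{X_{2\ast}})X_{1\ast}$ is again the derivation of a field in $\ghatX$ (namely $X_1$ plus terms of order at least~$3$), so the injectivity of ${\rm Exp}$ applies, and $\Phi({\rm ad}_{X_{2\ast}})={\rm Id}+N$ with $N$ strictly order-raising is invertible by a Neumann series in the complete filtered space of formal vector fields. As for what each approach buys: yours is softer and yields both directions from a single chain of equivalences, replacing the combinatorics of the full Campbell--Hausdorff series by the Hadamard-type expansion of conjugation and a fixed-point argument. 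The paper's computation, on the other hand, produces quantitative byproducts that are reused later: the explicit leading term of the generator of a commutator underlies Remark~\ref{obs1.1} (the order of contact of the group commutator with the identity is at least $r+s-1$) as well as the order bookkeeping in Lemma~\ref{formerclaim2} and Proposition~\ref{commuting8}, so in the context of the paper the Campbell--Hausdorff route is not gratuitous, even though for the lemma itself your argument suffices.
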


\begin{proof}
The non-immediate implication consists of showing that $[X_1, X_2] =0$ provided that $F_1$ and $F_2$ commute.
For this, denote by $Z_+$ (resp. $Z_-$) the infinitesimal generator of $F_1 \circ F_2$ (resp. $F_1^{-1} \circ F_2^{-1}$). The
diffeomorphisms $F_1, F_2$ commute if and only if $F_1 \circ F_2 \circ F_1^{-1} \circ F_2^{-1} = {\rm Exp} \, (Z_+) {\rm Exp} \, (Z_-)
= {\rm id}$. Denoting by $Z$ the infinitesimal generator of $F_1 \circ F_2 \circ F_1^{-1} \circ F_2^{-1}$, we have
\begin{eqnarray*}
Z & = & \log \, ( {\rm Exp} \, (Z_+) {\rm Exp} \, (Z_-) ) = \\
 & = & Z_+ + Z_- + \frac{1}{2} [Z_+, Z_-] + \frac{1}{12} [Z_+ ,[Z_+,Z_-]] - \frac{1}{12} [Z_- ,[Z_+,Z_-]] + {\rm h.o.t.}
\end{eqnarray*}
as it follows from Campbell-Hausdorff formula, see \cite{who??}. In turn,
$$
Z_+  =  \log \, ( F_1 \circ F_2 ) = \log \, ( {\rm Exp} \, (X_1) {\rm Exp} \, (X_2)) =
 X_1 + X_2 + \frac{1}{2} [X_1, X_2] + \cdots \, .
$$
Analogously
$$
Z_- = -X_1 -X_2 + \frac{1}{2} [X_1, X_2] + \cdots \, .
$$
Therefore
\begin{eqnarray}
Z & = & X_1 + X_2 + \frac{1}{2} [X_1, X_2] + \cdots  + ( -X_1 -X_2 + \frac{1}{2} [X_1, X_2] + \cdots ) +  \nonumber \\
 & & + \frac{1}{2} \left[X_1 + X_2 + \frac{1}{2} [X_1, X_2] + \cdots , -X_1 -X_2 + \frac{1}{2} [X_1, X_2] + \cdots \right] + \cdots \nonumber \\
 & = & [X_1,X_2] + \frac{1}{8} [[X_1,X_2], [X_2,X_1]] + \cdots \, . \label{finalcommutators}
\end{eqnarray}
Hence, if $X_1,X_2$ do not commute, then $[X_1, X_2]  = \sum_{j \geq k}^{\infty} Y_j$,
where $Y_j$ are homogeneous vector field and $k$ is the smallest strictly positive integer for which $Y_k$ is not identically {\it zero}. The orders
of the higher iterated commutators appearing in Equation~(\ref{finalcommutators}) are strictly greater than $k$, since
the orders of $X_1,X_2$ at the origin are at least~$2$. In other words, we have $Z = Y_k + {\rm h.o.t.}$. Since
$F_1 \circ F_2 \circ F_1^{-1} \circ F_2^{-1} = {\rm Exp} \, (Z)$, it follows that $F_1, F_2$ do not commute. The lemma is proved.
\end{proof}

\begin{obs}
\label{obs1.1}
{\rm Consider elements $F_1, F_2 \in \formdiffn$ whose orders of contact with the identity at $(0,0)$ are respectively $r, s$.
The preceding argument shows, in particular, that the order of contact with the identity at $(0,0)$ of $F_1\circ F_2 \circ F_1^{-1} \circ F_2^{-1}$
is at least $r+s-1$. Since $F_1,F_2$ are tangent to the identity, we have that $\min \{ r,s\} \geq 2$ so that the order of contact with the identity
of $F_1\circ F_2 \circ F_1^{-1} \circ F_2^{-1}$ is strictly greater than $\max \{ r,s \}$}.
\end{obs}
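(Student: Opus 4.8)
The plan is to read the estimate off the correspondence between formal diffeomorphisms and their infinitesimal generators. The first step is to record that the order of contact with the identity of an element $F \in \formdiffn$ coincides with the order at the origin of its infinitesimal generator $X = \log(F) \in \ghatX$. This is immediate from the proof of Lemma~\ref{correspondence}: writing $F(x,y) = (x + \sum_{n\geq 2} p_n, y + \sum_{n\geq 2} q_n)$ and $X = \sum_{n\geq 2}[a_n \ddx + b_n \ddy]$, the relations $p_{m+1} = a_{m+1} + R_{m+1}$ and $q_{m+1} = b_{m+1} + S_{m+1}$, with $R_{m+1}, S_{m+1}$ of degree strictly larger than the order of $X$, force the first nonzero homogeneous component of $F - \mathrm{id}$ and that of $X$ to agree. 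Accordingly I set $X_1 = \log(F_1)$ and $X_2 = \log(F_2)$, of orders $r$ and $s$ respectively.

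The crux is a degree count for the Lie bracket: if $X_1$ has order $r$ and $X_2$ has order $s$, then $[X_1, X_2]$ has order at least $r + s - 1$. To see this, write $X_i = a^{(i)} \ddx + b^{(i)} \ddy$ and recall that $[X_1, X_2] = (X_1(a^{(2)}) - X_2(a^{(1)}))\ddx + (X_1(b^{(2)}) - X_2(b^{(1)}))\ddy$. Applying the derivation $X_1$, whose coefficients have order $\geq r$, to $a^{(2)}$ (of order $\geq s$) differentiates the latter, lowering its degree by one, and then multiplies by a coefficient of order $\geq r$; hence $X_1(a^{(2)})$ has order $\geq r + s - 1$, and likewise for the three remaining terms. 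Thus every component of $[X_1, X_2]$ has order $\geq r + s - 1$. The leading homogeneous part may well cancel, which is exactly why one claims only ``at least''.

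With this in hand the remark follows directly from the proof of Lemma~\ref{commuting1}. That proof shows, via Equation~(\ref{finalcommutators}) and the Campbell--Hausdorff expansion, that the infinitesimal generator $Z$ of $F_1 \circ F_2 \circ F_1^{-1} \circ F_2^{-1}$ has the form $Z = Y_k + \mathrm{h.o.t.}$, where $Y_k$ is the lowest nonvanishing homogeneous component of $[X_1, X_2]$, the integer $k$ is the order of $[X_1, X_2]$, and the higher iterated commutators appearing there are of order strictly greater than $k$. By the second step $k \geq r + s - 1$, whence $Z$ has order $\geq r + s - 1$; applying the first step once more, the order of contact of $\mathrm{Exp}(Z) = F_1 \circ F_2 \circ F_1^{-1} \circ F_2^{-1}$ with the identity equals the order of $Z$ and is therefore at least $r + s - 1$. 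The final inequality is elementary: since $\min\{r,s\} \geq 2$, writing $s = \max\{r,s\}$ we have $r + s - 1 > s$ because $r \geq 2 > 1$, so the order of contact strictly exceeds $\max\{r,s\}$. The only genuinely new bookkeeping beyond Lemma~\ref{commuting1} is the bracket estimate of the second step; there is no real obstacle, as the strict growth of orders under further bracketing was already established in the course of proving that lemma.
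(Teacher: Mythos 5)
Your proposal is correct and takes essentially the same route as the paper: the remark's phrase ``the preceding argument'' refers precisely to the Campbell--Hausdorff computation in the proof of Lemma~\ref{commuting1}, and you have simply made explicit the two ingredients the paper leaves implicit, namely that the order of contact of $F$ with the identity equals the order of $\log(F)$, and the degree count showing that $[X_1,X_2]$ has order at least $r+s-1$. The only (trivial) case you pass over silently is $[X_1,X_2]=0$, where $F_1$ and $F_2$ commute by Lemma~\ref{commuting1} and the statement holds vacuously since the group commutator is the identity.
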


Armed with Lemma~\ref{commuting1}, it is now easy to describe the set consisting of those elements in $\formdiffn$
commuting with $F$, namely the centralizer of an element $F \in \formdiffn$. First, given
formal vector fields $X, Y$, we shall say that they are {\it not everywhere parallel}\, to mean that $X$ is not a multiple of $Y$ by an element in $\fieldC$.
Also, given a formal vector field $X$, there may or may not exist another vector field $Y$ not everywhere parallel to $X$ and commuting with $X$.
When this vector field $Y$ exists, it is never unique since every linear combination of $X$ and $Y$ has similar properties. Furthermore, if $X$ happens
to admit some non-constant first integral $h$, then $hY$ will also commute with $X$. Although $Y$ and $hY$ belong to $\ghatX$, the possibility of
having $h$ in $\fieldC \setminus \formalC$ cannot be ruled out since $Y$ need not have isolated singularities.

Also, preparing the way for Lemma~\ref{commuting2} below, note that every element of $\formdiffn$ possessing an infinitesimal generator $Z$
of the form $Z=aX + bY$, where $X, Y$ are as above and $a,\, b$ are first integrals of $X$, automatically belongs to the centralizer of $X$, cf. Lemma~\ref{commuting1}.
Clearly the vector field $Y$ (commuting with $X$ and not everywhere parallel to $X$) is never uniquely defined but a representative of them can be chosen.
Once this vector field $Y$ is chosen, we also have the following:

\begin{lemma}
\label{lastversionLemma1}
Let $X, Y$ be as above and consider the set of elements $F \in \formdiffn$ whose infinitesimal generators have the form
 $Z=aX + bY$, where $a,\, b$ are first integrals of $X$. Then the set of these elements $F \in \formdiffn$ form a subgroup of
 $\formdiffn$.
\end{lemma}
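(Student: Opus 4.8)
The plan is to identify the set described in the statement with the centralizer of a single element of $\formdiffn$, after which the subgroup property becomes automatic. The crucial observation is specific to dimension~$2$: the module of formal vector fields over $(\C^2,0)$ is free of rank~$2$, so extending scalars to $\fieldC$ produces a two-dimensional $\fieldC$-vector space. Since $X$ and $Y$ are not everywhere parallel, they are linearly independent over $\fieldC$ and therefore form a basis of this space. Consequently \emph{every} formal vector field $Z$ admits a unique decomposition $Z = aX + bY$ with $a,b \in \fieldC$; in other words, the constraint ``$Z$ lies in the $\fieldC$-span of $X$ and $Y$'' is vacuous here, and the only genuine condition isolating our set is that the coefficients $a,b$ be first integrals of $X$.

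First I would make this condition intrinsic through a bracket computation. Using $[X,X]=0$, $[X,Y]=0$, and the Leibniz rule $[X, fW] = f[X,W] + (X_{\ast} f)\,W$, one obtains
\[
[X,\, aX + bY] = (X_{\ast} a)\, X + (X_{\ast} b)\, Y \, .
\]
Because $X$ and $Y$ are linearly independent over $\fieldC$, the right-hand side vanishes if and only if $X_{\ast} a = X_{\ast} b = 0$, that is, if and only if $a$ and $b$ are first integrals of $X$. Combined with the previous paragraph, this shows that the set of infinitesimal generators in question is exactly the centralizer of $X$ inside $\ghatX$:
\[
\mathfrak{g} := \{\, aX + bY \; ; \; a,b \text{ first integrals of } X \,\} = \{\, Z \in \ghatX \; ; \; [Z,X]=0 \,\} \, .
\]

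It then remains to transport this through the exponential. Since $X \in \ghatX$, the element $F_X := {\rm Exp}\,(X)$ lies in $\formdiffn$, and Lemma~\ref{commuting1} asserts that an element $F \in \formdiffn$ commutes with $F_X$ precisely when its infinitesimal generator $\log F$ commutes with $X$. In view of the bijectivity of ${\rm Exp}$ (Lemma~\ref{correspondence}), this yields ${\rm Exp}\,(\mathfrak{g}) = \{\, F \in \formdiffn \; ; \; F \circ F_X = F_X \circ F \,\}$; that is, the set in the statement coincides with the centralizer of the single element $F_X$ in the group $\formdiffn$. Since the centralizer of an element in any group is a subgroup, the argument concludes.

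The step I expect to require the most care is the identification of our set with the \emph{full} centralizer rather than with a mere subset of it: it is precisely the rank-$2$ dimension count that rules out generators commuting with $X$ yet failing to have the prescribed form, and this is the one place where two-dimensionality is genuinely used. A secondary technical point is that the first integrals $a,b$ may lie in $\fieldC \setminus \formalC$, so one must keep in mind that $aX + bY$ is nonetheless a genuine (denominator-free) element of $\ghatX$, as already noted before the statement. For a reader preferring to avoid the centralizer identification, the same conclusion follows directly: the displayed bracket formula shows that $\mathfrak{g}$ is closed under $[\,\cdot\,,\cdot\,]$; closure under composition then results from the Campbell-Hausdorff series, which converges in the formal topology because iterated brackets of vector fields of order $\geq 2$ have strictly increasing order (as used in the proof of Lemma~\ref{commuting1}); and closure under inversion is immediate from ${\rm Exp}\,(Z)^{-1} = {\rm Exp}\,(-Z)$.
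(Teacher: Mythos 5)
Your proof is correct, but it follows a genuinely different route from the paper's. The paper argues directly: it takes two generators $Z_1 = aX + bY$ and $Z_2 = cX + dY$ with coefficients first integrals of $X$, computes $[Z_1,Z_2]$, observes via Schwarz's theorem (i.e.\ $[X,Y]=0$ implies that $Y$-derivatives of first integrals of $X$ are again first integrals of $X$) that the bracket is again of the form $\tilde{a}X + \tilde{b}Y$ with $\tilde{a},\tilde{b}$ first integrals of $X$, and then invokes the Campbell--Hausdorff formula with an induction over the iterated brackets to conclude that $\log (F_1 \circ F_2)$ has the required form --- in other words, the paper's proof is essentially the ``fallback'' you sketch in your closing paragraph (closure of the Lie-algebra side under bracket, then exponentiate). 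You instead identify the set with the centralizer of the single element ${\rm Exp}\,(X)$: the unique $\fieldC$-decomposition $Z = aX + bY$ (valid because the formal vector fields form a free rank-$2$ module and $X,Y$ are independent over $\fieldC$), combined with $[X, aX+bY] = (X_{\ast}a)X + (X_{\ast}b)Y$, shows the condition on infinitesimal generators is exactly $[Z,X]=0$, and Lemma~\ref{commuting1} transports this to commutation with ${\rm Exp}\,(X)$ in $\formdiffn$, making the subgroup property automatic. This is cleaner and proves more: your argument establishes en route the nontrivial inclusion of Case~2 of Lemma~\ref{commuting2} (which the paper proves separately, by precisely your computation), and it yields immediately that the group does not depend on the choice of $Y$ --- a fact the paper only extracts after Lemma~\ref{commuting2}. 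What the paper's computational route buys is the explicit bracket-closure calculation itself, which is reused later (e.g.\ in Lemma~\ref{formerclaim2} and Proposition~\ref{commuting8}); what yours buys is brevity and a conceptual identification. There is no circularity in your use of Lemmas~\ref{correspondence} and~\ref{commuting1}, both of which precede the statement, and you correctly flag the two delicate points: that $a,b$ may lie in $\fieldC \setminus \formalC$ while $aX+bY$ is still a genuine element of $\ghatX$ (cf.\ Remark~\ref{ageneraldecomposition}), and that the rank-$2$ count is what rules out elements commuting with $X$ whose generators fail to have the prescribed form.
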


\begin{proof}
In what follows it is understood that $a,b,c,d$ are always first integrals of $X$. To show that
the set of elements in $\formdiffn$ having infinitesimal generators of the form $aX +bY$ is a group, consider vector fields $Z_1 = aX +bY$ and $Z_2 = cX + dY$ and set
$F_1 = {\rm Exp}\, (Z_1)$, $F_2 = {\rm Exp}\, (Z_2)$. According to Campbell-Hausdorff formula, the infinitesimal generator $Z$ of $F_1 \circ F_2$
is given by
$$
Z = Z_1  + Z_2 + \frac{1}{2} [Z_1,Z_2] + \frac{1}{12} \left( [Z_1, [Z_1,Z_2]] - [Z_2, [Z_1,Z_2]] \right) + \cdots \, .
$$
However, note that
$$
[Z_1,Z_2] = \left( \frac{\partial a}{\partial Y} - \frac{\partial c}{\partial Y} \right) X + \left( \frac{\partial b}{\partial Y} - \frac{\partial d}{\partial Y} \right) Y \, .
$$
Since $X, Y$ commute, Schwarz theorem implies that the derivative with respect to $Y$ of a first integral for $X$ still is a first integral for $X$.
Thus $[Z_1,Z_2]$ has the form $\tilde{a} X + \tilde{b} Y$, where $\tilde{a}, \tilde{b}$ are first integrals for $X$. Now an induction argument immediately
ensures that the same conclusion is valid for the higher iterated commutators appearing in Campbell-Hausdorff formula. Therefore the
infinitesimal generator of $F_1 \circ F_2$ still has the general form $AX + BY$ where $A,B$ are first integrals of $X$. The lemma is proved.
\end{proof}

The next lemma characterizes the centralizer of $X$ and shows, in particular, that the group described in Lemma~\ref{lastversionLemma1} does not
depend on the choice of the representative vector field $Y$.

\begin{lemma}
\label{commuting2}
Let $F \in \formdiffn$ be given and denote by $X$ its infinitesimal generator. Then the centralizer of $F$ in $\formdiffn$
coincides with one of the following groups.
\begin{description}
\item[{\sc Case 1}] Suppose that every vector field $Y \in \ghatX$ commuting with $X$ is everywhere parallel to $X$. Then the centralizer
of $X$ consists of the subgroup of $\formdiffn$ whose elements have infinitesimal generators of the form $hX$, where $h \in \fieldC$
is a formal first integral of $X$. In particular, if $X$ admits only constants as first integrals, then the centralizer of $F$ is reduced
to the exponential of $X$.

\item[{\sc Case 2}] Suppose there is $Y \in \ghatX$ which is not everywhere parallel to $X$ and still commutes with $X$.
Then the centralizer of $F$ coincides with the subgroup of $\formdiffn$ consisting of those elements $F \in \formdiffn$ whose
infinitesimal generators have the form $aX +bY$, where $a,b \in \fieldC$ are (formal) first integral of $X$.
\end{description}
\end{lemma}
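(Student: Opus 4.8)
The plan is to transfer the entire problem from diffeomorphisms to their infinitesimal generators. By Lemma~\ref{correspondence} the map $\mathrm{Exp}$ is a bijection between $\ghatX$ and $\formdiffn$, and by Lemma~\ref{commuting1} an element $G \in \formdiffn$ commutes with $F$ if and only if its infinitesimal generator commutes with $X$. Hence the centralizer of $F$ corresponds, under $\mathrm{Exp}$, exactly to the set $\mathcal{C}(X) = \{ W \in \ghatX : [X,W]=0\}$ of formal vector fields in $\ghatX$ commuting with $X$. It therefore suffices to describe $\mathcal{C}(X)$, and the dichotomy in the statement is precisely the dichotomy for the structure of this commutant.

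In \textsc{Case 1}, where every vector field commuting with $X$ is everywhere parallel to $X$, I would proceed as follows. Any $W \in \mathcal{C}(X)$ has the form $W = hX$ with $h \in \fieldC$, and the Leibniz rule for the Lie bracket gives $[X, hX] = (X_{\ast} h)\, X$. Since $X$ is not identically zero, the condition $[X, hX]=0$ holds exactly when $X_{\ast} h = 0$, i.e. when $h$ is a formal first integral of $X$; this yields the claimed description of $\mathcal{C}(X)$ and hence of the centralizer. The final assertion is then immediate: if $X$ admits only constant first integrals, $h$ must be constant, so $\mathcal{C}(X) = \{ cX : c \in \mathbb{C}\}$ and the centralizer reduces to the one-parameter flow $\{\mathrm{Exp}(tX)\}$ of $X$.

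For \textsc{Case 2} the key structural observation is that the module of formal vector fields over $\formalC$ is free of rank~$2$, so after extending scalars to the fraction field it becomes a two-dimensional $\fieldC$-vector space. The hypothesis that $X$ and $Y$ are \emph{not everywhere parallel} is precisely the statement that they are linearly independent over $\fieldC$, so $\{X,Y\}$ is a $\fieldC$-basis. Consequently any $W \in \mathcal{C}(X)$ can be written uniquely as $W = aX + bY$ with $a,b \in \fieldC$ (obtained by Cramer's rule, the denominator being the nonvanishing determinant of $X,Y$). Using $[X,X]=0$ and $[X,Y]=0$ I then compute
\[
[X, W] = (X_{\ast} a)\, X + (X_{\ast} b)\, Y .
\]
As $X$ and $Y$ are $\fieldC$-independent, the vanishing $[X,W]=0$ forces $X_{\ast} a = X_{\ast} b = 0$, i.e. $a$ and $b$ are first integrals of $X$. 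This proves the centralizer is contained in the group of Lemma~\ref{lastversionLemma1}; the reverse inclusion is the already-noted fact that every such $aX+bY$ commutes with $X$, combined with Lemma~\ref{commuting1}. Since the centralizer is defined intrinsically, this equality also delivers the independence of the group of Lemma~\ref{lastversionLemma1} from the chosen representative $Y$.

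I expect the only delicate bookkeeping to be the verification that solving $W = aX + bY$ genuinely produces coefficients in $\fieldC$ and that the resulting combinations remain inside $\ghatX$, so that they correspond to honest elements of $\formdiffn$; the latter is guaranteed automatically here since $W$ is the generator of a centralizer element. The bracket computation itself is routine once freeness of the vector-field module is in hand. The conceptual heart of the argument is thus recognizing ``not everywhere parallel'' as $\fieldC$-linear independence, which collapses the whole classification to two-dimensional linear algebra over $\fieldC$.
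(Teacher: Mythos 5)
Your proof is correct and follows essentially the same route as the paper's: reduce via Lemma~\ref{commuting1} to the condition $[X,Z]=0$ on infinitesimal generators, write $Z=hX$ in Case~1 and $Z=aX+bY$ in Case~2 using independence over $\fieldC$, and conclude that the coefficients are first integrals of $X$. Your extra remarks (freeness of the module of vector fields, the coefficients landing in $\fieldC$ while the combination stays in $\ghatX$) merely make explicit what the paper handles in Remark~\ref{ageneraldecomposition}, so no changes are needed.
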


\begin{proof}
Suppose that $H$ is an element of $\formdiffn$ commuting with $F$. Denoting by $Z$ the infinitesimal generator of $H$, it follows
from Lemma~\ref{commuting1} that $[X,Z]=0$. Conversely the $1$-parameter group generated by $Y$ is automatically contained in
the centralizer of $F$.

Next, suppose that the assumption in Case~1 is verified. Then the quotient $h$ between $Z$ and $X$ can be defined as an element
of $\fieldC$ satisfying $Z = hX$. Therefore the condition $[X,Z]=0$ becomes $dh.X=0$, i.e. $h$ is a first integral for $X$.

Consider now the existence of $Y$, not everywhere parallel to $X$, verifying $[X,Y]=0$.  It is clear that the elements of $\formdiffn$ described
in Case~2 belong to the centralizer of $\fol$. Thus only the converse needs to be proved. Since $H$ commutes with $F$, Lemma~\ref{commuting1}
yields again $[X,Z]=0$.
Since $Y$ is not a multiple of $X$, there are functions $a(x,y) , \, b(x,y) \in \fieldC$ such that $Z = a X + b Y$. Now the equation $[X, Z] = 0$
yields
$$
( \partial a /\partial X) .X +  ( \partial b /\partial X) .Y =0 \, .
$$
Thus the fact that $Y$ is not a multiple of $X$ ensures that $( \partial a /\partial X)  = ( \partial b /\partial X) =0$. In other words, both
$a, \, b$ are first integrals of $X$. The lemma follows.
\end{proof}

Concerning the situation described in Case~2 of Lemma~\ref{commuting2}, it is already known that $Y$ is not uniquely defined.
Nonetheless the characterization of the subgroup of $\formdiffn$ mentioned in Lemma~\ref{lastversionLemma1} as the centralizer of
$F ={\rm Exp}\, (X)$ implies that the group in question does not depend on the choice of the vector field $Y$ commuting with $X$ and
not everywhere parallel to $X$.

A consequence of Lemma~\ref{commuting2} is as follows.

\begin{lemma}
\label{commuting3}
An abelian group $G \subset \formdiffn$ is either contained in the group generated by the exponentials of two commuting vector fields $X, Y$ that are not everywhere parallel
or it is contained in the group constituted by the exponentials of vector fields $h.X$ (where, as mentioned, $h$ is a first integral of $X$).
\end{lemma}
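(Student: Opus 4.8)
The plan is to deduce this statement directly from Lemma~\ref{commuting2}, of which it is essentially a restatement at the level of abelian \emph{groups} rather than centralizers of single elements. The first observation is that the two alternatives in the conclusion are precisely the two groups produced by Lemma~\ref{commuting2}: the ``group generated by the exponentials of two commuting vector fields $X,Y$ that are not everywhere parallel'' is, via Lemma~\ref{lastversionLemma1} and the remark following Lemma~\ref{commuting2}, the group of $F \in \formdiffn$ whose infinitesimal generators have the form $aX+bY$ with $a,b$ first integrals of $X$ (this group being independent of the chosen $Y$), while the second alternative is verbatim the centralizer described in Case~1. Hence it suffices to place $G$ inside the centralizer of a single, suitably chosen element.

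So first I would dispose of the trivial case $G = \{ {\rm id}\}$, which is contained in either group. Assuming $G \neq \{ {\rm id}\}$, choose any element $F \in G$ with $F \neq {\rm id}$ and let $X = \log (F)$ be its infinitesimal generator, which is a non-zero element of $\ghatX$ by the bijection of Lemma~\ref{correspondence}. Since $G$ is abelian, every element of $G$ commutes with $F$, so $G$ is contained in the centralizer of $F$ in $\formdiffn$. Now I would apply the dichotomy of Lemma~\ref{commuting2} to $X$: if every $Y \in \ghatX$ commuting with $X$ is everywhere parallel to $X$ (Case~1), the centralizer of $F$ consists of the exponentials of the vector fields $hX$ with $h$ a first integral of $X$, placing $G$ in the second alternative; if instead some $Y \in \ghatX$ not everywhere parallel to $X$ commutes with $X$ (Case~2), the centralizer of $F$ consists of the exponentials of the vector fields $aX+bY$ with $a,b$ first integrals of $X$, placing $G$ in the first alternative. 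In either case the conclusion follows.

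The genuine work has already been carried out in Lemma~\ref{commuting2}, so the main point here is conceptual rather than computational, and two small subtleties deserve attention. First, the two cases of Lemma~\ref{commuting2} are properties of an individual generator $X$, so a priori distinct elements of $G$ could fall in different cases; this causes no difficulty because the conclusion is an ``either/or'' and the containment of $G$ in the centralizer of $F$ holds for whichever nontrivial $F$ we fix, together with the complete description of that centralizer. Second, one must make sure the chosen $F$ is nontrivial so that $X \neq 0$ and Lemma~\ref{commuting2} genuinely applies; this is exactly why the case $G = \{{\rm id}\}$ is treated separately. The only other thing to keep in mind is the identification, noted above, of the first alternative with the centralizer group of Lemma~\ref{lastversionLemma1}, so that ``generated by the exponentials of $X,Y$'' is read as ``having infinitesimal generators of the form $aX+bY$ with $a,b$ first integrals of $X$''.
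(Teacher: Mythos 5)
Your reduction to Lemma~\ref{commuting2} breaks down at the identification made in your first paragraph. The ``group generated by the exponentials of two commuting vector fields $X,Y$'' in the statement of Lemma~\ref{commuting3} is the \emph{linear span} of $X,Y$ in the sense of Remark~\ref{lastversionRem1}: since $[X,Y]=0$, compositions of ${\rm Exp}\,(sX)$ and ${\rm Exp}\,(tY)$ are precisely the maps ${\rm Exp}\,(c_1X+c_2Y)$ with \emph{constant} $c_1,c_2\in\C$. This is strictly smaller than the Case~2 centralizer of Lemma~\ref{commuting2} (equivalently, the group of Lemma~\ref{lastversionLemma1}), whose elements have infinitesimal generators $aX+bY$ with $a,b$ arbitrary first integrals of $X$; that larger group is in general not even abelian, since $[a_1X+b_1Y,\,a_2X+b_2Y]$ need not vanish when the coefficients are non-constant (cf.\ Lemma~\ref{formerclaim2}). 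So in your Case~2 you have only shown that $G$ lies in the centralizer of a single element $F$, which does not place $G$ in either alternative of the lemma. The distinction is not cosmetic: the later applications (Lemma~\ref{lastversionLemma3}, Proposition~\ref{commuting8}) use exactly the fact that in the first alternative $G$ sits inside the two-dimensional space $E\simeq\C^2$ spanned by $X,Y$, on which normalizing elements act linearly; with your reading of the first alternative those arguments would not go through.

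The missing idea is that one must exploit commutation with \emph{two} elements of $G$, and this is what the paper's proof does. Either every element of $G$ has generator of the form $hX$ ($h$ a first integral of $X$), which is the second alternative, or $G$ contains elements $F_1,F_2$ whose generators $X,Y$ are not everywhere parallel (and commute, by Lemma~\ref{commuting1}, because $G$ is abelian). In the latter case one writes the generator of an arbitrary $\varphi\in G$ as $Z=f_1X+f_2Y$ with $f_1,f_2\in\fieldC$; commutation with $F_1$ forces $[X,Z]=0$, so $f_1,f_2$ are first integrals of $X$, while commutation with $F_2$ forces $[Y,Z]=0$, so they are first integrals of $Y$ as well, and since $X,Y$ are not everywhere parallel the functions $f_1,f_2$ must be constants. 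This is what places $G$ in the linear span. Note also that the pair $X,Y$ has to be taken among generators of elements of $G$ itself, not an abstract $Y$ merely commuting with your chosen $X$: for instance, the abelian group generated by ${\rm Exp}\,(X)$ and ${\rm Exp}\,(aX+bY)$, with $a,b$ non-constant first integrals of $X$, is not contained in the linear span of $X$ and $Y$, but it is contained in the linear span of the commuting non-parallel pair $X$ and $aX+bY$ extracted from $G$.
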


\begin{proof}
If all elements in $G$ have infinitesimal generators of the form $hX$, where $h$ is a first integral for $X$, then the statement is clear. Thus
suppose there are elements $F_1, F_2$ in $G$ whose respective infinitesimal generators $X, Y$ are not everywhere parallel.
Let $\varphi$ be an element of $G$ whose infinitesimal generator $Z$ is not everywhere parallel to $X$. Clearly we have
$Z = f_1(x,y) X + f_2(x,y) Y$ where $f_1,f_2 \in \fieldC$. Since $\varphi$ must commute with $F_1$, it follows that $[X,Z]=0$ what,
in turn, implies that both $f_1, \, f_2$ are first integrals for $X$ (cf. proof of Lemma~\ref{commuting2}). Analogously $\varphi$ also commutes
with $F_2$ so that $[Y,Z]=0$ and hence $f_1, f_2$ are first integrals for $Y$ as well. Since $X, \, Y$ are not everywhere parallel, it follows
that both $f_1, \, f_2$ must be constant. The lemma is proved.
\end{proof}

\begin{obs}
\label{lastversionRem1}
{\rm Given two commuting non-everywhere parallel vector fields $X, Y$, the subgroup of $\formdiffn$ generated by the exponentials
${\rm Exp}\, (tX), \, {\rm Exp}\, (tY)$, $t\in \C$, of $X, Y$ is going to be referred to as the {\it linear span of $X, \, Y$}.
A consequence of the preceding proof is that every vector field $Z$ commuting with both $X, Y$ must be contained in the linear span of $X,Y$
provided that $X,Y$ are non-everywhere parallel commuting vector fields as above.
In particular, if $F$ is an element of $\formdiffn$ commuting with both
${\rm Exp}\, (X), \, {\rm Exp}\, (Y)$ then the infinitesimal generator $Z$ of $F$ has the form $c_1X + c_2Y$ where $c_1,c_2$ are {\it constants}.}
\end{obs}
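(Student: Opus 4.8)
The plan is to obtain this remark as a direct corollary of the bracket computation already carried out in the proof of Lemma~\ref{commuting2}, the whole argument taking place inside the rank-two module of formal vector fields over the field $\fieldC$.

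First I would reduce the second (``in particular'') assertion to the first. If $F \in \formdiffn$ commutes with both ${\rm Exp}\, (X)$ and ${\rm Exp}\, (Y)$, then, letting $Z = \log (F)$ be its infinitesimal generator, Lemma~\ref{commuting1} yields $[Z,X] = [Z,Y] = 0$. Hence it is enough to prove the first claim, namely that every formal vector field $Z$ commuting simultaneously with $X$ and $Y$ has the form $c_1 X + c_2 Y$ with $c_1, c_2 \in \C$.

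Next I would pass to the basis $\{X,Y\}$. Write $X = a\,\partx + b\,\party$ and $Y = c\,\partx + d\,\party$. The hypothesis that $X$ and $Y$ are not everywhere parallel means precisely that
\[
W := ad - bc
\]
is a nonzero element of $\fieldC$, so $\{X,Y\}$ is a basis of the module of formal vector fields over $\fieldC$; I may therefore write $Z = f_1 X + f_2 Y$ with $f_1, f_2 \in \fieldC$. Using the Leibniz rule for the Lie bracket together with $[X,X] = [X,Y] = 0$, one computes
\[
[X,Z] = (X_{\ast} f_1)\, X + (X_{\ast} f_2)\, Y \, .
\]
Since $X_{\ast} f_1, X_{\ast} f_2 \in \fieldC$ while $X, Y$ are linearly independent over $\fieldC$, the vanishing of $[X,Z]$ forces $X_{\ast} f_1 = X_{\ast} f_2 = 0$; that is, $f_1, f_2$ are first integrals of $X$. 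Repeating the same computation with $Y$ in place of $X$ shows that $f_1, f_2$ are first integrals of $Y$ as well.

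It then remains to see that a common first integral $f \in \fieldC$ of $X$ and $Y$ is necessarily constant. Writing $df = g_1\, dx + g_2\, dy$, the relations $df.X = df.Y = 0$ form a homogeneous linear system for $(g_1, g_2)$ whose determinant is exactly $W \neq 0$; hence $df = 0$ and $f \in \C$. Applying this to $f_1$ and $f_2$ yields $Z = c_1 X + c_2 Y$ with constant coefficients, proving both assertions. The only genuinely substantive step is the translation of the hypothesis ``not everywhere parallel'' into the invertibility of $W$ in $\fieldC$, which reduces the problem to two-dimensional linear algebra over that field; after this reduction the bracket computation and the identification of the constants are routine, which is exactly why the author records the statement merely as a consequence of the proof of Lemma~\ref{commuting2}.
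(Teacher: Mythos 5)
Your proof is correct and takes essentially the same route as the paper: the remark is recorded there as a consequence of the proof of Lemma~\ref{commuting3}, which likewise writes $Z = f_1 X + f_2 Y$ with $f_1, f_2 \in \fieldC$, invokes the bracket computation of Lemma~\ref{commuting2} (together with Lemma~\ref{commuting1} for the ``in particular'' part) to conclude that $f_1, f_2$ are common first integrals of $X$ and $Y$, and then deduces constancy from non-parallelism. Your only addition is to spell out that last step via the determinant $W = ad - bc \neq 0$ in $\fieldC$, which the paper leaves implicit.
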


Here is another easy consequence of Lemmas~\ref{commuting2}.
\begin{lemma}
\label{lastversionLemma2}
Suppose that $h$ is a non-constant first integral of $X$ and
let $F_1 = {\rm Exp} \, (X)$ and $F_2 = {\rm Exp} \, (hX)$ be elements in $\formdiffn$ (in particular the first jet of $X$ at the origin vanishes).
The intersection of the centralizers of $F_1$ and $F_2$, i.e. the set of elements in $\formdiffn$ commuting with both $F_1, F_2$ is the subgroup of
$\formdiffn$ constituted by those elements whose infinitesimal generators have the form $aX$, where $a$ is a first integral of $X$. In particular,
this group is abelian.
\end{lemma}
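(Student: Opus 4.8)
The plan is to reduce the two commuting conditions to Lie bracket identities via Lemma~\ref{commuting1}, and then to exploit the explicit description of the centralizer of $F_1$ furnished by Lemma~\ref{commuting2}. Concretely, I would let $F \in \formdiffn$ commute with both $F_1$ and $F_2$ and denote by $Z \in \ghatX$ its infinitesimal generator. By Lemma~\ref{commuting1}, $F$ commutes with $F_1$ if and only if $[X,Z]=0$, and with $F_2$ if and only if $[hX,Z]=0$. The first identity says precisely that $Z$ lies in the centralizer of $F_1$, which Lemma~\ref{commuting2} describes according to two cases; I would organize the whole argument around this dichotomy, imposing the second identity on top of each description.

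In Case~1 of Lemma~\ref{commuting2} the centralizer of $F_1$ already consists of the elements ${\rm Exp}\,(gX)$ with $g$ a first integral of $X$, so there is nothing to add: a one-line bracket computation $[hX,gX] = h(X_{\ast} g)X - g(X_{\ast} h)X = 0$, where both $X_{\ast} g$ and $X_{\ast} h$ vanish, shows that every such element commutes automatically with $F_2$, so the intersection is exactly the asserted group. The substantive situation is Case~2, where there exists $Y \in \ghatX$ not everywhere parallel to $X$ with $[X,Y]=0$, and $Z = aX + bY$ with $a,b \in \fieldC$ first integrals of $X$. Expanding with the Leibniz rule $[fU,gV]=fg[U,V]+f(U_{\ast} g)V - g(V_{\ast} f)U$ and using $X_{\ast} a = X_{\ast} b = X_{\ast} h = 0$ together with $[X,Y]=0$, all contributions cancel except a single surviving term, giving $[hX,Z] = -\,b\,(Y_{\ast} h)\,X$. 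Thus $[hX,Z]=0$ reduces to the scalar equation $b\,(Y_{\ast} h)=0$ in $\fieldC$.

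The crux — and the step I expect to be the main obstacle — is then to rule out $Y_{\ast} h = 0$, after which $b=0$ follows at once since $\fieldC$ is a field. I would settle this by a transversality argument. If $Y_{\ast} h$ vanished, then $h$ would be a common first integral of $X$ and $Y$. Writing $X = X_1\,\partial/\partial x + X_2\,\partial/\partial y$ and $Y = Y_1\,\partial/\partial x + Y_2\,\partial/\partial y$, the two equations $X_{\ast} h = Y_{\ast} h = 0$ form a homogeneous linear system in $(\partial h/\partial x, \partial h/\partial y)$ whose determinant is the wedge $X_1 Y_2 - X_2 Y_1$. Since $X$ and $Y$ are not everywhere parallel, this determinant is nonzero in $\fieldC$ (indeed nonvanishing of the wedge is exactly the condition that $X$ be no $\fieldC$-multiple of $Y$), so the system forces $\partial h/\partial x = \partial h/\partial y = 0$, i.e. $h$ constant, contradicting the hypothesis that $h$ is non-constant. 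Hence $Y_{\ast} h \neq 0$, whence $b=0$ and $Z = aX$ with $a$ a first integral of $X$, which is the claimed description in both cases.

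Finally I would record that the resulting set is genuinely an abelian subgroup of $\formdiffn$. For first integrals $a,a'$ of $X$ one has $[aX,a'X] = a(X_{\ast} a')X - a'(X_{\ast} a)X = 0$, so by Lemma~\ref{commuting1} the exponentials ${\rm Exp}\,(aX)$ and ${\rm Exp}\,(a'X)$ commute, giving abelianity; closure under composition and inverses then follows either from Lemma~\ref{lastversionLemma1} or directly from ${\rm Exp}\,(aX)\,{\rm Exp}\,(a'X) = {\rm Exp}\,((a+a')X)$, valid because $aX$ and $a'X$ commute and $a+a'$ is again a first integral of $X$. This simultaneously identifies the intersection of the two centralizers and establishes that it is abelian, completing the proof.
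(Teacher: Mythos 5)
Your proposal is correct and follows essentially the same route as the paper: reduce both commutation conditions to Lie brackets via Lemma~\ref{commuting1}, split along the two cases of Lemma~\ref{commuting2}, compute $[hX, aX+bY]$ to get the single surviving term $b\,(\partial h/\partial Y)\,X$, and conclude $b=0$. Your only addition is welcome detail: where the paper asserts $\partial h/\partial Y \not\equiv 0$ tersely (attributing it to $X,Y$ being non-parallel), your wedge-determinant argument makes explicit that the non-constancy of $h$ is also needed, since a common first integral of two non-parallel vector fields must be constant.
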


\begin{proof}
If every vector field $Y$ commuting with $X$ is everywhere parallel to $X$, then the statement follows at once from Lemma~\ref{commuting2},
Case~1. Suppose then the existence of $Y$ not everywhere parallel to $X$ satisfying $[X,Y]=0$. Again it follows from Lemma~\ref{commuting2} that
the centralizer of $F_1 = {\rm Exp} \, (X)$ consists of those elements whose infinitesimal generators
have the form $aX + bY$. Nonetheless, the elements in the intersection of the centralizers of $F_1, \, F_2$
must commute with $F_2$ as well. According to Lemma~\ref{commuting1}, this happens if and only
if the infinitesimal generator $aX + bY$ commutes with $hX$. However
$$
[hX, aX + bY] = b\left( \frac{\partial h}{\partial Y} \right) X = 0 \, .
$$
Nonetheless $\partial h /\partial Y$ is not identically zero since $Y$ is not everywhere parallel to $X$. It then follows that $b$ must vanish identically. The lemma is proved.
\end{proof}

Another central ingredient in the proof of Proposition~\ref{commuting9} is the algebraic description of solvable subgroups of $\formdiffn$ that
will be undertaken below. As mentioned the corresponding material is interesting in itself.

Recall that the {\it normalizer}\, of a group
$G \subset \formdiffn$ is the maximal subgroup $N_G$ of $\formdiffn$ containing $G$ and such that $G$ is a normal subgroup of $N_G$.
Similarly the {\it centralizer} of an abelian group $G$ is the maximal abelian subgroup of $\formdiffn$ containing $G$ (in particular the
centralizer of an element is nothing but the centralizer of the cyclic group generated by this element).

\begin{lemma}
\label{lastversionLemma3}
Let $G \subset \formdiffn$ be an abelian group contained in the linear span of two non-everywhere parallel commuting vector fields
$X, Y$ (by assumption this also means that $G$ is not contained in the exponential of a single vector field). Then the normalizer $N_G$ of $G$
coincides with the group of diffeomorphisms induced by the linear span of $X, Y$. In particular, $N_G$ is abelian.
\end{lemma}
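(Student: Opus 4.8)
The plan is to prove the two inclusions $L\subseteq N_G$ and $N_G\subseteq L$, where $L$ denotes the group of diffeomorphisms induced by the linear span of $X,Y$, i.e. $L=\{{\rm Exp}(c_1X+c_2Y)\,:\,c_1,c_2\in\C\}$, the key being to identify $L$ with the centralizer $C(G)$ of $G$. First I would record that $L$ is abelian: any two of its elements have infinitesimal generators that are constant combinations of the commuting fields $X,Y$, hence commute, so their images under ${\rm Exp}$ commute by Lemma~\ref{commuting1}. Since $G\subseteq L$ and $L$ is abelian, $G$ is normal in $L$, so $L\subseteq N_G$. I would then check $C(G)=L$: because $G$ is abelian but not contained in the exponential of a single vector field, it contains $F_1={\rm Exp}(Z_1)$, $F_2={\rm Exp}(Z_2)$ with $Z_1,Z_2$ spanning $V:=\langle X,Y\rangle_\C$ over $\C$; an element commutes with all of $G$ iff it commutes with $F_1,F_2$, which by Lemma~\ref{commuting1} means its generator commutes with $X$ and $Y$, and by Remark~\ref{lastversionRem1} this forces the generator into $V$. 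Thus $L=C(G)$.

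For the reverse inclusion I would take $\phi={\rm Exp}(Z)\in N_G$ and use that conjugation acts on generators by push-forward: $\phi\,{\rm Exp}(W)\,\phi^{-1}={\rm Exp}(\phi_*W)$ with $\phi_*=e^{{\rm ad}_Z}$ on vector fields. As $\phi$ normalizes $G$, the images $\phi_*Z_1,\phi_*Z_2$ lie in $V$; since $Z_1,Z_2$ span $V$ and $\phi_*$ is an invertible $\C$-linear map, $\phi_*$ preserves $V$. Writing $M:=\phi_*|_V\in GL(V)$, the goal becomes to show $M=\mathrm{id}$: indeed $M=\mathrm{id}$ says $[Z,X]=[Z,Y]=0$, whence $Z\in V$ by Remark~\ref{lastversionRem1}, i.e. $\phi\in L$. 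That $M$ is at least unipotent follows from a filtration argument: because $X,Y,Z$ all have order~$\geq2$, the operator ${\rm ad}_Z$ strictly raises the order of a vector field, so $\phi_*-\mathrm{id}$ strictly raises the order-filtration on the two-dimensional space $V$; as nonzero elements of $V$ have finite order, $M-\mathrm{id}$ is nilpotent.

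It remains to exclude a nontrivial unipotent $M$, and this is the step I expect to be the main obstacle. If $M\neq\mathrm{id}$, a Jordan basis furnishes $X',Y'\in V$ with $(M-\mathrm{id})X'=Y'\neq0$ and $(M-\mathrm{id})Y'=0$; these are $\C$-independent, hence not everywhere parallel, and $o(Y')>o(X')$. Unwinding $e^{{\rm ad}_Z}Y'=Y'$ gives $[Z,Y']=0$, and unwinding $e^{{\rm ad}_Z}X'=X'+Y'$ together with $[Z,Y']=0$ gives, by a short order computation, $[Z,X']=Y'$; since also $[X',Y']=0$, the fields $X',Y',Z$ form a Heisenberg triple. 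To contradict this I would apply Lemma~\ref{commuting2} (Case~2) to $Y'$, using the transverse commuting field $X'$: every field commuting with $Y'$ has the form $aX'+bY'$ with $a,b$ first integrals of $Y'$. Imposing $[Z,X']=Y'$ on $Z=aX'+bY'$ yields $X'a=0$ and $X'b=-1$; the first makes $a$ a common first integral of the independent pair $X',Y'$, hence constant, so that $bY'=Z-aX'$ is holomorphic of order~$\geq2$, while the second makes $b$ a non-constant first integral of $Y'$ with unit derivative along $X'$.

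The contradiction should then come from analysing this meromorphic $b$. Its polar divisor $D$ is invariant under $Y'$ (as $b$ is a first integral of $Y'$), and it must also be invariant under $X'$ — otherwise $b$ could not blow up along a curve transverse to $X'$ while keeping $X'b\equiv-1$ finite. Holomorphy of $bY'$ forces $Y'$ to vanish along $D$ to the order of the pole, and the resulting first-order equation for $b$ near $D$ (whose solutions involve logarithmic terms unless the two fields are proportional) is incompatible with $b$ being meromorphic, unless $X'$ and $Y'$ are everywhere parallel — contradicting their $\C$-independence in $V$. Hence $M=\mathrm{id}$, so $\phi\in L$ and $N_G=L$; since $L$ is abelian, so is $N_G$, which gives the final assertion. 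The only genuinely delicate point is this last incompatibility, namely showing that a Heisenberg triple of not-everywhere-parallel holomorphic vector fields of order~$\geq2$ cannot exist; everything else is a formal manipulation of infinitesimal generators via Lemmas~\ref{commuting1}--\ref{commuting2} and Remark~\ref{lastversionRem1}.
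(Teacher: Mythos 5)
Up to its last step, your argument is correct and runs parallel to the paper's own proof: the paper likewise lets a normalizing element $F$ act adjointly on the two-dimensional space $E=\C X\oplus \C Y$ (well defined because $G$ is not contained in a single exponential), observes that this action is unipotent because $F$ is tangent to the identity (your order-filtration argument is a clean justification of that assertion), and in the non-diagonalizable case normalizes $F^{\ast}X=X$, $F^{\ast}Y=Y+X$, concluding via Lemma~\ref{commuting2} and Hadamard's formula that the infinitesimal generator of $F$ is $hX$ with $h$ a first integral of $X$ and $\partial h/\partial Y$ a nonzero constant. That is exactly your normal form $Z=aX'+bY'$ with $a$ constant, $Y'b=0$, $X'b=-1$ (your pair $(Y',X')$ playing the role of the paper's $(X,Y)$); so your Jordan-basis reduction and your identification of the ``Heisenberg triple'' as the only remaining obstruction agree with the paper. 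The divergence is only in the endgame: the paper projects to the space of level curves of $h$ and argues that $F$ decomposes into flows of $X$ and $Y$, while you attempt a polar-divisor analysis of $b$.

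The endgame is where the genuine gap lies, and it cannot be repaired, because the configuration you must exclude actually exists. Take
$$
X'=x^2\frac{\partial}{\partial x}+3xy\frac{\partial}{\partial y}\, ,\qquad Y'=x^3\frac{\partial}{\partial y}\, ,\qquad b=\frac{1}{x}\, ,\qquad Z=bY'=x^2\frac{\partial}{\partial y}\, .
$$
These are holomorphic of order at least~$2$, $X',Y'$ are not everywhere parallel, and a direct computation gives $[X',Y']=0$, $Y'b=0$, $X'b=-1$, hence $[Z,Y']=0$ and $[Z,X']=Y'$: a Heisenberg triple. Every necessary condition your sketch extracts is met (the polar divisor $\{x=0\}$ is invariant by both fields, and $Y'$ vanishes on it to order~$3$, which absorbs the pole of $b$), and no logarithmic terms arise, so the incompatibility you hoped for is false. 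Concretely, ${\rm Exp}\,(tX')(x,y)=(x/(1-tx),\, y/(1-tx)^3)$ and $\phi={\rm Exp}\,(Z):(x,y)\mapsto (x,y+x^2)$ satisfy $\phi\circ{\rm Exp}\,(X')\circ\phi^{-1}={\rm Exp}\,(X'-Y')$ and $\phi\circ{\rm Exp}\,(Y')\circ\phi^{-1}={\rm Exp}\,(Y')$, so $\phi$ normalizes $G=\langle {\rm Exp}\,(X'),{\rm Exp}\,(Y')\rangle$ (and indeed the full linear span), yet its generator $x^2\,\partial/\partial y$ does not lie in $\C X'\oplus\C Y'$, so $\phi\notin L$; moreover $\phi$ does not commute with ${\rm Exp}\,(X')$, so $N_G$ is not even abelian. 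Thus the step you flagged as the delicate point is not merely unproven but false, and the same example pinpoints the flaw in the paper's own concluding argument: the leaf-space decomposition there tacitly assumes that the leafwise time conjugating the leaf-preserving part of $F$ into the flow of $X$ is constant, whereas here that time equals $1/c$ on the leaf $\{x=c\}$ (note also that $h=1/x$ is meromorphic, so the paper's truncation remark for formal $h$ does not rescue it). What does survive is your centralizer computation $C(G)=L$ and the inclusion $L\subseteq N_G$; the reverse inclusion, which is the substance of the lemma, fails as stated.
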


\begin{proof}
Consider an element $F$ in $N_G$ along with its adjoint action on vector fields belonging to
the linear space $E$ spanned by $X, Y$ which is naturally isomorphic to $\C^2$. This action is clearly well-defined since $G$ is not contained
in the exponential of a single vector field. Next, note that the eigenvalues
of the automorphism of $E$ induced by $F$ are equal to~$1$ since $F$ is tangent to the identity. Then either the action induced by $F$ is the
identity or it is non-diagonalizable. In the former case, $F$ clearly belongs to the linear span of $X\, Y$ and there is nothing else to be proved.
Thus only the non-diagonalizable case remains to be considered. Modulo a change of basis, we can assume that $F^{\ast} X =X$.
Therefore $F$ preserves $X$ and, hence, it is contained in the centralizer of ${\rm Exp}\, (tX)$. According
to Lemma~\ref{commuting2}, the centralizer of ${\rm Exp}\, (tX)$ consists of elements having the form ${\rm Exp}\, (thX)$ or the form
${\rm Exp}\, (thZ)$, where $Z$ is some vector field commuting with $X$ but not everywhere parallel to $X$. In both cases $h$ is a first
integral for $X$. To obtain a contradiction with the assumption that the action of $F$ on $E$ is not diagonalizable,
let us first consider the action of an element $F$ of $\formdiffn$ having the form
$F={\rm Exp}\, (hZ)$ (i.e. $F={\rm Exp}\, (thZ)$ with $t=1$) on the vector field $Z$ itself. According to Hadarmard's lemma, see \cite{who??}, we have
$$
F^{\ast} Z = Z + [hZ, Z] + \frac{1}{2!} [hZ, [hZ,Z]] + \frac{1}{3!} [hZ,[hZ, [hZ,Z]]] + \cdots \, .
$$
By assumption $hZ$ lies in $\ghatX$ so that $F^{\ast} Z-Z$ is a vector field in $\ghatX$ as well. Besides, this vector field is clearly parallel to $Z$.
Thus we have $F^{\ast} Z = Z + Z'$ where $Z' \in \ghatX$ is everywhere parallel to $Z$.
However  $F^{\ast} Z$ is still contained in the vector space $E$ and, furthermore, the action of $F$ on $E$ has eigenvalues equal to~$1$.
Therefore we must have $F^{\ast} Z =Z$ so that $F$ preserves $Z$ contradicting the fact that the action of $F$ on $E$ is not the identity.
Hence we are led to the conclusion that $F$ must have the form ${\rm Exp}\, (thX)$ in order to have a non-diagonalizable action.
Still considering the action of $F = {\rm Exp}\, (hX)$ (i.e. $t$ was again
set to be~$1$) on $Y$, the same Hadamard's lemma yields
$$
F^{\ast} Y = Y + [hX, Y] + \frac{1}{2!} [hX, [hX,Y]] + \frac{1}{3!} [hX,[hX,[hX,Y]]] + \cdots \, .
$$
However $[hX, Y] = (\partial h/\partial Y) X$ and $\partial h/\partial Y$ still is a first integral of $X$ thanks to Schwarz theorem ($X,Y$ commute). In particular
$[hX, [hX,Y]]$ vanishes identically and so do all higher orders commutators. It then follows that
$F^{\ast} Y = Y + ( \partial h/\partial Y)  X$. Besides $\partial h/\partial Y$ must be a constant in $\C$ since $F$ preserves $E$.
By the standard Jordan form, the constant in question can be chosen equal to~$1$ (recall that the action of $F$ on $E$ is not diagonalizable).
Summarizing the preceding discussion, we have $X$ and $Y$ so that
\begin{equation}
F^{\ast} X = X \; \; \, {\rm and} \; \; \, F^{\ast} Y = Y + ( \partial h/\partial Y)  X = Y +X \, . \label{thetwoequations}
\end{equation}
The proof of the lemma is now reduced to showing that every $F$ satisfying the preceding equations must belong to the linear span of $X, \, Y$.
For this suppose that $h$ is holomorphic so that the space of its level curves can be considered. The equation $\partial h/\partial Y =1$ implies that
$Y$ can be projected in the space of level curves of $h$ and, in fact, under this projection $Y$ is mapped to the constant vector field $Z$ (equal to~$1$).
Thus $Y$ decomposes as a multiple of $X$ plus a ``constant transverse'' vector field represented by $Z$. Since $F$ preserves $X$, it must also define an automorphism
on the space of level curves of $h$. Furthermore the condition $F^{\ast} Y = Y +X$ ensures that this induced automorphism must preserve $Z$. Since this
``leaf space'' has dimension~$1$, we conclude that the mentioned induced automorphism must be
embedded in the flow of $Z$. Hence $F$ can itself be decomposed as an automorphism preserving each level curve of $h$ composed with a transverse
automorphism embedded in the flow of $Z$. Since $Y$ commutes with $X$, the leaf-preserving component of $F$ must preserve the vector field $X$
(recall that $F^{\ast} X = X$) and hence it is embedded in the flow of $X$. Summarizing, we concluded that $F$ is obtained by composing a local diffeomorphism
embedded in the flow of $X$ with another local diffeomorphism embedded in the flow of $Y$. The statement is then proved.

The only point in the above discussion where $h$ was required to be analytic was to make sense of its level curves. The general algebraic statement however
also holds in the formal category. For example, by truncating $h$ at some high order, the equation~$F^{\ast} Y = Y + ( \partial h/\partial Y)  X = Y +X$ will still hold
for terms of lower order. Thus every element $F$ in $\formdiffn$ verifying Equation~(\ref{thetwoequations}) must coincide with elements in the linear
span of $X, Y$ to arbitrarily large orders. From this it is straightforward to conclude that $F$ must be contained in the linear span in question.
The proof of the lemma is over.
\end{proof}

The next lemma completes the description of the normalizers of non-trivial abelian groups, cf. Lemma~\ref{commuting3}.

\begin{lemma}
\label{commuting7nowlemma}
Let $G \subset \formdiffn$ be a finitely generated non-trivial abelian group all of whose elements have infinitesimal generators parallel
to a certain formal vector field $X$. Then the normalizer $N_G$ of $G$ in $\formdiffn$ satisfies the following:
\begin{itemize}

\item Suppose that $G$ is contained in ${\rm Exp}\, (tX)$. Then $N_G$ coincides with the centralizer of its elements. Namely, it consists
of those elements of $\formdiffn$ whose infinitesimal generators have the form $aX +  bY$, with $a,b$ first integrals of $X$ and where $Y$
is a vector field commuting with $X$ and not everywhere parallel to $X$ (if $Y$ does not exist, then $N_G$ is reduced to the group formed
by those elements whose infinitesimal generators have the form $aX$).

\item If $G \subset \formdiffn$ is not contained in the exponential of a single vector field $X$, then $N_G$ coincides with the subgroup
of $\formdiffn$ consisting of those elements whose infinitesimal generators have the form $hX$ ($h$ first integral for $X$). In particular,
$N_G$ is abelian.
\end{itemize}
\end{lemma}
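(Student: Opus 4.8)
The plan is to treat the two bullets separately, throughout identifying an element of $\formdiffn$ with its infinitesimal generator via Lemma~\ref{commuting1}. First I set up the ambient object. Let $\mathcal{K}\subset\fieldC$ be the field of formal first integrals of $X$, and let $\mathcal{H}$ denote the set of $F\in\formdiffn$ whose generator has the form $hX$ with $h\in\mathcal{K}$. Since $[aX,bX]=\bigl(a\,(X\!\cdot\!b)-b\,(X\!\cdot\!a)\bigr)X=0$ for $a,b\in\mathcal{K}$, the Campbell--Hausdorff formula gives ${\rm Exp}(aX)\,{\rm Exp}(bX)={\rm Exp}((a+b)X)$, so $\mathcal{H}$ is an abelian subgroup and $h\mapsto{\rm Exp}(hX)$ identifies it with $(\mathcal{K},+)$. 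By Lemma~\ref{commuting3} the hypothesis forces $G\subseteq\mathcal{H}$, so $G\cong\Lambda$ for a finitely generated $\Lambda\subseteq(\mathcal{K},+)$, and $\mathcal{H}\subseteq C_G\subseteq N_G$ automatically, where $C_G$ is the centralizer. The recurring computation is that, for $F\in N_G$, conjugation sends ${\rm Exp}(hX)$ to ${\rm Exp}\bigl((h\circ F^{-1})\,F_{*}X\bigr)$; as the image lies in $G\subseteq\mathcal{H}$ its generator is parallel to $X$, whence $F_{*}X=\phi X$ with $\phi=g/(h\circ F^{-1})$ for some $g\in\Lambda$. Both $g$ and $h\circ F^{-1}$ are first integrals of $\phi X$ (hence of $X$), so $\phi\in\mathcal{K}$; and since tangency of $F$ to the identity leaves the leading homogeneous part of $X$ unchanged, $\phi(0)=1$.

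For the first bullet the generators of $G$ are ${\rm Exp}(t_iX)$ with $t_i\in\C$, so the conjugate ${\rm Exp}(t_i\phi X)$ lies in $G\subseteq\{{\rm Exp}(tX)\}$ only if $t_i\phi$ is constant; since some $t_i\neq0$ this forces $\phi$ constant, and with $\phi(0)=1$ we get $\phi\equiv1$, i.e. $F_{*}X=X$. Thus $F$ commutes with the whole one-parameter group ${\rm Exp}(tX)$, giving $N_G\subseteq C_G$; as $C_G\subseteq N_G$ trivially we obtain $N_G=C_G$. Finally $C_G$ is exactly the centralizer of ${\rm Exp}(X)$, which is the group described in Lemma~\ref{commuting2} (Case~2 if a transverse commuting $Y$ exists, Case~1 otherwise), and this is the assertion of the first bullet.

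For the second bullet, ``$G$ not contained in a single one-parameter group'' means the first-integral coefficients of the generators span a $\C$-space of dimension at least two; choose $F_1={\rm Exp}(h_1X),\,F_2={\rm Exp}(h_2X)\in G$ with $h_1,h_2$ linearly independent over $\C$, so that $u:=h_1/h_2$ is a non-constant first integral. I first compute $C_G$. If $Z$ is the generator of some $H$ commuting with $F_1,F_2$, then $[Z,h_iX]=Z(h_i)X+h_i[Z,X]=0$ by Lemma~\ref{commuting1}; eliminating $[Z,X]$ yields $h_2Z(h_1)=h_1Z(h_2)$, i.e. $Z(u)=0$, while also $X(u)=0$. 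In dimension two $\ker du$ is one-dimensional at a generic point and contains both $X$ and $Z$, so $Z=\psi X$ for some $\psi\in\fieldC$; substituting back gives $X(\psi)=0$, so $\psi\in\mathcal{K}$ and $H\in\mathcal{H}$. Hence $C_G=\mathcal{H}$, which is abelian. It remains to prove $N_G=C_G$.

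This last equality is the heart of the matter. Conjugation by $F\in N_G$ restricts to an automorphism $\Theta$ of $G\cong\Lambda$, namely $\Theta(h)=(h\circ F^{-1})\,\phi$ with $\phi\in\mathcal{K}$, $\phi(0)=1$ as above. Because $F^{-1}$ and $\phi$ are tangent to the identity, $\Theta(h)-h$ has order strictly greater than that of $h$; since $\Lambda$ is finitely generated and $\bigcap_d\{h:\mathrm{ord}(h)\ge d\}=\{0\}$, the map $\Theta-\mathrm{id}$ is nilpotent, so $\Theta$ is unipotent. The plan is then to upgrade unipotence to triviality: the field automorphism $\alpha\colon h\mapsto h\circ F^{-1}$ and the factor $\phi$ are not independent, and a careful order analysis together with the fact that $\mathcal{K}$ has transcendence degree one over $\C$ should force both $\phi\equiv1$ and $\alpha|_{\Lambda}=\mathrm{id}$; the clean ingredient is that a tangent-to-identity automorphism of $\mathcal{K}$ fixing a non-constant first integral must be the identity, since otherwise its iterates would have infinite orbits on that integral. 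Once $\Theta=\mathrm{id}$ we conclude $F\in C_G$, hence $N_G=C_G=\mathcal{H}$, whose abelianity is then immediate. I expect precisely this last step --- passing from ``$\Theta$ unipotent'' to ``$\Theta$ trivial'', equivalently ruling out that $F$ permutes the generators of $G$ or rescales $X$ non-trivially --- to be the main obstacle, since unipotence alone is consistent with non-trivial integral shears and one must genuinely exploit both the multiplicative nature of $h\mapsto h\circ F^{-1}$ and the rigidity of first integrals of a planar formal vector field with vanishing linear part; everything else reduces to commutator computations and Lemmas~\ref{commuting1}--\ref{commuting3}.
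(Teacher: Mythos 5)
Your first bullet is essentially complete and correct: the pushforward computation $F_*(t_iX)=t_i\phi X$ together with $\phi(0)=1$ does force $\phi\equiv 1$, and this is a legitimate variant of the paper's argument, which instead compares orders of contact with the identity of $\varphi$ and of $F\circ\varphi\circ F^{-1}\circ\varphi^{-1}$ via Remark~\ref{obs1.1}. Your identification $C_G=\mathcal{H}$ is likewise a correct re-derivation of Lemma~\ref{lastversionLemma2} (modulo the small caveat that $\mathcal{H}$ is identified with the subgroup of those $h\in\mathcal{K}$ for which $hX$ lies in $\ghatX$, not with all of $(\mathcal{K},+)$). The genuine gap is exactly where you flag it: in the second bullet you never prove $N_G\subseteq C_G$. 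You establish only that the conjugation map $\Theta(h)=(h\circ F^{-1})\phi$ is a unipotent automorphism of $\Lambda$, and then offer a plan (``a careful order analysis \dots should force $\phi\equiv1$ and $\alpha|_{\Lambda}=\mathrm{id}$''). Unipotence alone is compatible with a non-trivial shear such as $\Theta(1)=1$, $\Theta(u)=u+1$ on $\Lambda=\Z\oplus\Z u$, so nothing you have proved excludes a normalizing $F$ outside $\mathcal{H}$. Moreover the ``clean ingredient'' you invoke --- that a tangent-to-identity automorphism of $\mathcal{K}$ fixing a non-constant first integral must be the identity ``since otherwise its iterates would have infinite orbits'' --- has no meaning in the formal category: elements of $\fieldC$ are not points of a space carrying a dynamics, and in any case what is needed is that $F$ centralizes $G$, not that it fixes $\mathcal{K}$ pointwise.

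The paper closes precisely this hole by the order-of-contact bookkeeping your sketch gestures at but does not carry out. Since inner automorphisms of $\formdiffn$ preserve the order of contact with the identity, and since $G$ is finitely generated abelian, only finitely many vector fields $X,h_1X,\ldots,h_lX$ of the maximal order $r$ arise as infinitesimal generators of elements of $G$; hence any $F\in N_G$ permutes this finite set, and a suitable power $F^k$ preserves both ${\rm Exp}\,(tX)$ and a second one-parameter group --- either ${\rm Exp}\,(th_1X)$ if $l\geq 1$, or ${\rm Exp}\,(t\tilde{h}_0X)$ at the second-highest order $s<r$, which exists because $G$ is not contained in a single exponential. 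Writing $F={\rm Exp}\,(W)$, so that $F^k={\rm Exp}\,(kW)$, the commutation condition on $kW$ descends to $W$ (``roots of elements of a centralizer lie in the centralizer''), so $F$ itself lies in the intersection of the two centralizers, which Lemma~\ref{lastversionLemma2} identifies with your $\mathcal{H}$. Note how this kills the shear above: if ${\rm ord}(u)>0$, then ${\rm Exp}\,(uX)$ and its would-be conjugate ${\rm Exp}\,((u+1)X)$ have different orders of contact with the identity, which conjugation cannot alter. Until you supply an argument of this kind, the second bullet --- the substantive half of the lemma, and the one Proposition~\ref{commuting8} actually relies on --- remains unproven.
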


\begin{proof}
Suppose first that $G$ is contained in ${\rm Exp} \, (tX)$, for some $X \in \ghatX$ and let $\varphi$ be a non-trivial element of $G$.
Next, let $F \in \formdiffn$ satisfy $F \circ \varphi \circ F^{-1}
= \widetilde{\varphi} \in G$. Then $F \circ \varphi \circ F^{-1} \circ \varphi^{-1} = (\widetilde{\varphi} \circ \varphi^{-1}) \in G$.
Suppose now that $\varphi \neq \widetilde{\varphi}$.
Since $\widetilde{\varphi}, \, \varphi$ are both embedded in ${\rm Exp} \, (tX)$ and satisfy $\widetilde{\varphi} \circ \varphi^{-1} \neq {\rm id}$,
the order of contact between $\widetilde{\varphi} \circ \varphi^{-1}$ and the identity
is equal to the order of contact between $\varphi$ and the identity. On the other hand, the order of contact of $F \circ \varphi \circ F^{-1} \circ \varphi^{-1}
\neq {\rm id}$ with the identity must be strictly larger than the corresponding order of $\varphi$, cf. Remark~\ref{obs1.1}.
The resulting contradiction ensures that $F$ must commute with $\varphi$. Therefore $N_G$ coincides with the centralizer
of $\varphi$ and the statement results from Lemma~\ref{commuting2}.

Suppose now that $G$ is contained in the group generated by a number of exponentials ${\rm Exp}\, (thX)$, where $h$ is a first integral for $X$,
but not on the exponential of a single vector field.
Since $G$ is finitely generated and abelian, it contains an element $\varphi \neq {\rm id}$ having maximal
order $r$ of contact with the identity. Without loss of generality, let $X$ denote the infinitesimal generator of $\varphi$.
Note that $G$ may contain other elements having order of contact
with the identity equal to~$r$ but not contained in ${\rm Exp}\, (tX)$ since $X$ might admit a first integral in $\fieldC$ whose order of the ``numerator'' equals the
order of the ``denominator''. Yet, again the fact that $G$ is finitely generated implies that only finitely many vector fields $X, h_1 X,  \ldots , h_l X$ may give rise to
elements in $G$ having order of contact with the identity equal to~$r$.
Now, recall that inner automorphisms of $\formdiffn$
preserves the order of contact with the identity so that the normalizer of $G$ must left invariant the set of elements of $G$ contained
in one of the following exponentials: ${\rm Exp}\, (tX), \ldots , {\rm Exp}\, (th_lX)$.
Therefore the natural action of $F \in N_G$ on $\ghatX$ must induce a permutation of the set $X, h_1 X,  \ldots h_l X$. Modulo passing to a finite power $F^k$ of $F$,
it follows that $F^k$ preserves ${\rm Exp}\, (tX)$ and hence it must commute with $\varphi$. In other words, there is a finite power $F^k$ of $F$ that lies in the centralizer
of $\varphi$. However, it follows from the description of centralizers
presented in Lemma~\ref{commuting2} that the ``$k^{\rm th}$-root'' of an element in the centralizer still belongs to the centralizer.
Therefore $F$ itself belongs to the centralizer of $\varphi$. However, if $l \geq 1$, then the power $F^k$ may be chosen so that $F^k$ preserves also
the vector field $h_1X$. Thus the same argument will imply that $F$ belongs to the centralizer of some element in ${\rm Exp}\, (th_1X)$. According
to Lemma~\ref{lastversionLemma2}, the intersection of these two centralizers consist of elements in $\formdiffn$ whose infinitesimal generators
have the form $hX$ where $h$ is a first integral for $X$. The lemma is proved in this case.

Finally suppose that $l=0$ and let $s$ be the ``second greatest'' order of an infinitesimal element $\tilde{h}_0X$ of an element in $G$. In  particular
$2 \leq s < r$. By assumption, this element exists since $l=0$ and $G$ is not contained in the exponential  of a single vector field.
Again, $G$ being finitely generated and abelian, it follows that there are only finitely many formal vector fields
$\tilde{h}_0 X, \ldots ,\tilde{h}_mX$ having order~$s$ at the origin and corresponding to infinitesimal generators of elements in $G$. Therefore
the action of $F$ on the set formed by these vector fields must preserve the set itself. Thus $F$ will finally belong to the centralizer of some element
whose infinitesimal generator is $\tilde{h}_0X$. Lemma~\ref{lastversionLemma2} will then complete our proof.
\end{proof}

\begin{obs}
\label{ageneraldecomposition}
{\rm Assume we are given vector fields $X, Y \in \ghatX$ which are not everywhere parallel. Consider also a third vector field $Z \in \ghatX$.
Since $X,Y$ are not everywhere parallel, there exists a unique decomposition $Z = aX +bY$ with $a, b$ belonging to $\fieldC$. Our purpose
here is to remind the reader of the elementary fact that, whereas $a,b$ may lie in $\fieldC \setminus \formalC$, the corresponding
vector fields $aX, \, bY$ have coefficients in $\formalC$ and, in fact, the vector fields $aX, \, bY$ belong to $\ghatX$,
as it can immediately be checked by directly computing the corresponding functions $a,b$. The contents of this remark will implicitly be used
in the rest of the section.}
\end{obs}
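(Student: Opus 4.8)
The plan is to exhibit $a$ and $b$ explicitly by Cramer's rule and then to verify, by a direct divisibility argument in the ring $\formalC$, that the products $aX$ and $bY$ carry no poles. Writing $X = X_1\,\partial_x + X_2\,\partial_y$, $Y = Y_1\,\partial_x + Y_2\,\partial_y$ and $Z = Z_1\,\partial_x + Z_2\,\partial_y$ with all coefficients in $\formalC$, the hypothesis that $X,Y$ be not everywhere parallel is equivalent to $\Delta := X_1 Y_2 - X_2 Y_1 \neq 0$ in $\formalC$ (indeed $X$ is a multiple of $Y$ in $\fieldC$ precisely when $\Delta=0$). Solving $Z = aX + bY$ gives the unique pair $a = N_a/\Delta$, $b = N_b/\Delta$, where $N_a = Z_1 Y_2 - Z_2 Y_1$ and $N_b = X_1 Z_2 - X_2 Z_1$ lie in $\formalC$; clearing denominators produces the formal identity $\Delta\, Z = N_a X + N_b Y$. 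Thus the entire assertion reduces to the claim that $\Delta$ divides each of $N_a X_1,\,N_a X_2$ and, symmetrically, $N_b Y_1,\,N_b Y_2$ in the unique factorization domain $\formalC$: once this is known, $aX$ and $bY$ have coefficients in $\formalC$, and a comparison of $1$-jets in $Z = aX + bY$ (recall $Z$ has vanishing $1$-jet) shows $aX, bY \in \ghatX$.

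I would establish this divisibility one irreducible factor at a time, and here the commutation relation enjoyed by the vector fields $X,Y$ under consideration in this section (cf. Lemmas~\ref{commuting1}--\ref{commuting7nowlemma}, where $Y$ is taken commuting with $X$) enters decisively. A short computation with the area form $dx\wedge dy$ shows that $[X,Y]=0$ forces $X.\Delta = (\mathrm{div}\,X)\,\Delta$ and $Y.\Delta = (\mathrm{div}\,Y)\,\Delta$, where $X.\Delta$ denotes the derivative of $\Delta$ along $X$. Consequently, if $\pi \in \formalC$ is an irreducible factor of $\Delta$ and $\pi^{k}$ is the exact power of $\pi$ dividing $\Delta$, then dividing the first identity by $\pi^{k-1}$ and reducing modulo $\pi$ gives $\pi \mid X.\pi$, and likewise $\pi \mid Y.\pi$. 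In other words, every irreducible factor of $\Delta$ cuts out a curve $\{\pi=0\}$ that is simultaneously invariant under the formal flows of $X$ and of $Y$.

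With this in hand I would pass to coordinates adapted to $\{\pi=0\}$, writing $w$ for a local equation of the curve, so that invariance reads: the transverse components of both $X$ and $Y$ lie in the ideal $(w)$. In such coordinates $\Delta$ factors as $w$ times the determinant of the reduced frame, and the key point is that the order to which $\Delta$ vanishes along $w$ is produced entirely by the transverse vanishing of $X$ and $Y$ — this is precisely the content of $X.\Delta = (\mathrm{div}\,X)\,\Delta$. The apparent pole of $a = N_a/\Delta$ along $\{\pi=0\}$ is therefore of order no larger than the order to which the coefficients of $X$ vanish there, so that $aX$ stays regular along $\{\pi=0\}$; the same applies to $bY$. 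As $\pi$ ranges over the irreducible factors of $\Delta$, this yields $\Delta \mid N_a X_j$ and $\Delta \mid N_b Y_j$ for $j=1,2$, which is what was needed.

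The main obstacle is exactly this last bookkeeping step: matching, factor by factor, the multiplicity $v_\pi(\Delta)$ against the order of vanishing along $\{\pi=0\}$ of the transverse components of $X$ and of $Y$. This is where the commutation relation between $X$ and $Y$ is genuinely used, the identity $X.\Delta = (\mathrm{div}\,X)\,\Delta$ being the device that aligns the two orders and makes the cancellation automatic — thereby reducing the verification to the direct computation of $a$ and $b$ alluded to in the statement.
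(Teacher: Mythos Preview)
You correctly sensed that the Remark, as stated, needs the hypothesis $[X,Y]=0$: without it a short computation (e.g.\ $X=x^2\partial_x+x^3\partial_y$, $Y=y^2\partial_x+y^3\partial_y$, $Z=x^2\partial_x$) already produces $aX$ with non-formal coefficients. Unfortunately the assertion remains false \emph{even with} $[X,Y]=0$, so the ``bookkeeping step'' you flagged as the main obstacle cannot be completed.

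Take
\[
X=x^{2}\,\partial_x+x^{2}\,\partial_y,\qquad Y=x^{2}\,\partial_x+(2xy-y^{2})\,\partial_y,\qquad Z=x^{2}\,\partial_x.
\]
All three lie in $\ghatX$; a direct check gives $[X,Y]=0$ and $\Delta=X_1Y_2-X_2Y_1=-x^{2}(x-y)^{2}\neq0$. Then
\[
a=\frac{Z_1Y_2-Z_2Y_1}{\Delta}=1-\frac{x^{2}}{(x-y)^{2}},
\]
so the first component of $aX$ equals $x^{2}-x^{4}/(x-y)^{2}$, which is not in $\formalC$ since $(x-y)^{2}\nmid x^{4}$. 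Hence $aX\notin\ghatX$.

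The place where your argument slips is precisely the multiplicity matching. The identity $X.\Delta=(\mathrm{div}\,X)\,\Delta$ is correct and does give $\pi\mid X.\pi$ for each irreducible factor $\pi$ of $\Delta$; in the example $\{x=y\}$ is indeed invariant under both $X$ and $Y$. But invariance is only a multiplicity-one statement: it does \emph{not} bound $v_\pi(\Delta)$ by the order of vanishing of the $X_i$ along $\{\pi=0\}$. Here $v_{x-y}(\Delta)=2$ while $v_{x-y}(X_1)=v_{x-y}(X_2)=0$; the double zero of $\Delta$ along $\{x=y\}$ arises because $X$ and $Y$ agree \emph{tangentially} there (both restrict to $x^{2}(\partial_x+\partial_y)$ on that line), not from any vanishing of the coefficients of $X$. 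Consequently the pole of $a$ is not absorbed by $X$.

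The paper supplies no argument beyond ``as it can immediately be checked'', so there is nothing to compare your approach to; the Remark as written is simply not correct. What the later proofs in Section~4 actually exploit is that the \emph{combined} vector field $aX+bY$ lies in $\ghatX$, together with the extra hypothesis (coming from Lemma~\ref{commuting2}) that $a,b$ be first integrals of $X$; whether the separate conclusion $aX,bY\in\ghatX$ holds under that stronger hypothesis is a different question from the one the Remark states.
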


Let us now state a technical lemma that will repeatedly be used in the proofs of the two main results of this
section, namely Propositions~\ref{commuting8} and~\ref{commuting9}
below. For this suppose that $X,Y$ are formal vector fields in $\ghatX$ that are not everywhere parallel.

\begin{lemma}
\label{formerclaim2}
Suppose that $X, Y$ are non-everywhere parallel commuting vector fields.
Consider elements $F_1, F_2 \in \formdiffn$ whose infinitesimal generators are respectively given by
$a_1 X + b_1 Y$ and by $a_2 X + b_2 Y$. The coefficients $a_i,b_i \in \formalC$ are supposed to be first integrals for $X$ and $b_1 . b_2$
is supposed not to vanish identically. Then the infinitesimal generator of $F_1 \circ F_2 \circ F_1^{-1} \circ F_2^{-1}$ is everywhere parallel
to $X$ if and only if the quotient $b_1/b_2$ is constant.
\end{lemma}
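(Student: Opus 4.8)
The plan is to reduce everything to the single bracket $[Z_1,Z_2]$, where $Z_i=a_iX+b_iY$ is the infinitesimal generator of $F_i$, and then to control the passage from this leading bracket to the full infinitesimal generator of the commutator. First I would record that, since the $a_i,b_i$ are first integrals of $X$, Lemma~\ref{lastversionLemma1} places both $F_1$ and $F_2$ in the subgroup of $\formdiffn$ whose generators have the form $aX+bY$ with $a,b$ first integrals of $X$; hence the commutator $F_1\circ F_2\circ F_1^{-1}\circ F_2^{-1}$ lies in the same subgroup, and its generator may be written $Z=AX+BY$ with $A,B$ again first integrals of $X$. Because $X,Y$ are not everywhere parallel, $Z$ is everywhere parallel to $X$ precisely when $B\equiv 0$, so the entire statement reduces to deciding when $B$ vanishes.

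Next I would compute $[Z_1,Z_2]$ directly from bilinearity of the bracket, the hypothesis $[X,Y]=0$, and the relations $X(a_i)=X(b_i)=0$. The four terms collapse to
\[
[Z_1,Z_2]=\bigl(b_1\,Y(a_2)-b_2\,Y(a_1)\bigr)X+\bigl(b_1\,Y(b_2)-b_2\,Y(b_1)\bigr)Y,
\]
so the $Y$-component equals $b_1\,Y(b_2)-b_2\,Y(b_1)=-b_2^{2}\,Y(b_1/b_2)$, using $b_2\not\equiv 0$. Since $b_1/b_2\in\fieldC$ is automatically a first integral of $X$ (being a quotient of such), it is constant if and only if it is also annihilated by $Y$; and a common first integral of the two non-everywhere-parallel fields $X,Y$ must be constant, because the linear system $df(X)=df(Y)=0$ has invertible coefficient matrix over $\fieldC$. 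Thus the $Y$-component of $[Z_1,Z_2]$ vanishes exactly when $b_1/b_2$ is constant.

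It then remains to transfer this dichotomy from the single bracket $[Z_1,Z_2]$ to the full generator $Z$, which by the Campbell--Hausdorff expansion of the commutator is an (order-wise convergent) sum of iterated brackets of $Z_1,Z_2$ of length at least two, with $[Z_1,Z_2]$ its unique lowest-order term. For the direction assuming $b_1/b_2=c\in\C$, I would set $W:=Z_1-cZ_2$; since $b_1=cb_2$ this equals $(a_1-ca_2)X$ and so lies in the set $\mathcal{R}X$ of vector fields parallel to $X$ with first-integral coefficients, where $\mathcal{R}$ denotes the ring of first integrals of $X$. A short computation $[hX,aX+bY]=-b\,Y(h)\,X$ shows that $\mathcal{R}X$ is an ideal of the Lie algebra in question. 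As $\langle Z_1,Z_2\rangle=\langle W,Z_2\rangle$ and every iterated bracket of $W,Z_2$ of length at least two meets $\mathcal{R}X$ (induction on length, using that $\mathcal{R}X$ is an ideal and that pure brackets of $Z_2$ vanish), the whole Campbell--Hausdorff sum lands in $\mathcal{R}X$, whence $Z$ is parallel to $X$. For the converse I would argue by contraposition: if $b_1/b_2$ is non-constant then the $Y$-component of $[Z_1,Z_2]$ is non-zero, and since $[Z_1,Z_2]$ sits at strictly lower order than every other term of the expansion (the order-of-contact growth already exploited in Lemma~\ref{commuting1} and Remark~\ref{obs1.1}), this non-zero $Y$-component cannot be cancelled, forcing $B\not\equiv 0$.

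The genuine work lies in the two order-theoretic passages of the previous paragraph: that extracting the $Y$-component of $Z$ commutes with the infinite Campbell--Hausdorff summation and respects the order filtration, so that the lowest-order contribution to $B$ is exactly the $Y$-component of $[Z_1,Z_2]$; and, dually, that $\mathcal{R}X$ is closed under that same formal summation. Both are consequences of the strict increase of the order of contact with the identity under iterated commutators, together with Remark~\ref{ageneraldecomposition}, which guarantees that the components $AX$ and $BY$ of any element of $\ghatX$ along the frame $X,Y$ are themselves in $\ghatX$. I expect these bookkeeping points, rather than the algebra, to be the only real obstacle.
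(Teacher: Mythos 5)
Your argument is correct in substance, and on the hard implication (generator of the commutator parallel to $X$ $\Rightarrow$ $b_1/b_2$ constant) it coincides with the paper's proof: both compute the single bracket, identify its $Y$-component as $b_1\,Y(b_2)-b_2\,Y(b_1)$, and argue that this lowest-order $Y$-contribution survives the Campbell--Hausdorff summation. Where you genuinely depart from the paper is in the converse. The paper merely asserts that when $b_2=cb_1$ all higher iterated commutators are ``immediate to check'' to be parallel to $X$; your argument is more structural and actually proves this: you observe that $\mathcal{R}X$ (fields $hX$ with $h$ a first integral of $X$) is a Lie ideal of the algebra $\{aX+bY:\, a,b \text{ first integrals of } X\}$ via $[hX,aX+bY]=-b\,Y(h)\,X$, replace the generators by $W=Z_1-cZ_2\in\mathcal{R}X$ and $Z_2$, and note that every bracket of length at least two either is a pure (hence vanishing) bracket of $Z_2$ or contains $W$ and so lies in the ideal. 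This is a cleaner and more reusable mechanism than the paper's term-by-term inspection, and it localizes exactly why constancy of the ratio kills all $Y$-components at once.

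One caution on the contrapositive direction as you phrased it: the fact that $[Z_1,Z_2]$ has strictly smaller order than every other term of the expansion does \emph{not} by itself prevent cancellation in the $Y$-component, because for a term $T=A_TX+B_TY$ the order of $B_TY$ is not bounded below by the order of $T$ (the leading parts of $A_TX$ and $B_TY$ may cancel in the sum, so $T$ can have order strictly larger than its $Y$-part). The correct component-wise statement --- which the paper makes explicit --- is that the $Y$-component of any iterated bracket of fields $\alpha X+\beta Y$ with $\alpha,\beta$ first integrals of $X$ is itself a bracket of the form $[\overline{b}Y,\overline{d}Y]$ of the $Y$-parts, and since these lie in $\ghatX$ (your appeal to Remark~\ref{ageneraldecomposition} is exactly right), their brackets have order strictly exceeding the order $m$ of $C_m$; hence the lowest-order contribution to $B$ is exactly $C_m$ and cannot be cancelled. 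Your own bracket formula already supplies this finer fact, so the gap is one of execution rather than of ideas, but it should be stated in this component-wise form rather than deduced from the order growth of the full brackets.
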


\begin{proof}
To begin with, note that the commutator of $a_1X + b_1 Y$ and $a_2 X + b_2 Y$ is given by
$$
[a_1X + b_1 Y, a_2X + b_2 Y] = \left( b_1 \frac{\partial a_2}{\partial Y} - b_2 \frac{\partial a_1}{\partial Y}\right) X +
\left( b_1 \frac{\partial b_2}{\partial Y} - b_2 \frac{\partial b_1}{\partial Y}\right) Y \, .
$$
Assume that the infinitesimal generator of $F_1 \circ F_2 \circ F_1^{-1} \circ F_2^{-1}$ is everywhere parallel
to $X$. To conclude that the quotient $b_1/b_2$ is a constant, it suffices to check that the coefficient
of $Y$ in the right-hand side of the above equation vanishes identically. Indeed, this would mean that
$b_1/b_2$ is a first integral for $Y$ and hence must be constant since it is
also a first integral for $X$ (and $X,Y$ are not everywhere parallel by assumption).
Therefore, it remains to check that the function $b_1 (\partial b_2/\partial Y) - b_2 (\partial b_1/\partial Y)$ is identically zero.
For this let us recall that, whether or not the functions $a_i,b_i$ belong to $\fieldC \setminus \formalC$, $i=1,2$, the vector fields $a_1X + b_1 Y,
\, a_2X + b_2 Y$ are supposed to belong to $\ghatX$. Therefore their commutators have order strictly higher than the maximum between the orders
of $a_1X + b_1 Y$ and of $a_2X + b_2 Y$. Having recalled this fact, the argument to prove that $b_1 (\partial b_2/\partial Y) - b_2 (\partial b_1/\partial Y)$ must
vanish identically is as follows. Suppose for a contradiction that $b_1 (\partial b_2/\partial Y) - b_2 (\partial b_1/\partial Y)$ does not
vanish identically and denote by
$C_m (x,y)$ is first non-zero homogeneous component
(so that $m$ is the order of the function $b_1 (\partial b_2/\partial Y) - b_2 (\partial b_1/\partial Y)$). Whereas $m$ may be negative,
the vector field $[b_1 (\partial b_2/\partial Y) - b_2 (\partial b_1/\partial Y)] Y$ lies in $\ghatX$.

On the other hand, as done in the proof of Lemma~\ref{commuting1}, the Campbell-Hausdorff formula may be used to compute the
infinitesimal generator of $F_1 \circ F_2 \circ F_1^{-1} \circ F_2^{-1}$. It turns out, however, that iterated commutators
higher than $[a_1X + b_1 Y, a_2X + b_2 Y] $ give rise to monomials of degree strictly greater than~$m$ appearing as multiplicative factors
of the vector field $Y$. In fact, all contributions parallel to $Y$ arise from a commutator of the form $[\overline{b}Y,
\overline{d}Y]$ where at least one between $\overline{b}Y, \, \overline{d}Y$ has order greater than or equal to the order of $C_m (x,y).Y$.
It then follows that the component $C_m (x,y).Y$ appearing in the expression for the mentioned infinitesimal generator of 
$F_1 \circ F_2 \circ F_1^{-1} \circ F_2^{-1}$ will not be cancelled out by a contribution arising from higher order commutators.
In other words, the infinitesimal generator of $F_1 \circ F_2 \circ F_1^{-1} \circ F_2^{-1}$ has a non-zero component in $Y$ and thus is not
everywhere parallel to $X$.

The converse is a more direct application of the Campbell-Hausdorff formula. If $b_2 =cb_1$ for some constant $c \in \C$, then 
$[a_1X + b_1 Y, a_2X + b_2 Y] $ is everywhere parallel to~$X$. It is the immediate to check that all higher iterated commutators are everywhere
parallel to~$X$ as well. The proof of the lemma is over.
\end{proof}

Building on the previous material,
the next proposition provides some key information on the structure of solvable subgroups of $\formdiffn$.

\begin{prop}
\label{commuting8}
Suppose that $G \subset \formdiffn$ is a solvable non-abelian group. Then the following holds:
\begin{itemize}
\item $G$ has non-trivial center $Z(G)$.
\item $G$ is metabelian, i.e. its first derived group is abelian.
\item If $G$ is not abelian, then $Z (G)$ coincides with the first derived group $D^1 (G)$ of $G$. Moreover $Z (G) =D^1 (G)$ is fully contained
in ${\rm Exp}\, (tX)$ for a certain $X \in \ghatX$.
\end{itemize}
\end{prop}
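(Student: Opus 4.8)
The engine of the whole argument is the inequality of Remark~\ref{obs1.1}: writing $\nu(F)\ge 2$ for the order of contact with the identity of a nontrivial $F\in\formdiffn$, one has $\nu([F_1,F_2])>\max\{\nu(F_1),\nu(F_2)\}$ whenever $[F_1,F_2]\neq\mathrm{id}$. Thus commutators strictly raise the order of contact, and since the identity is the only element of infinite order of contact, no nontrivial subgroup can ``commute upwards'' indefinitely. I would organize the proof around the last nontrivial term of the derived series. Let $n\ge 2$ be the derived length of $G$ and put $A=D^{n-1}(G)$, which is abelian, nontrivial and characteristic, hence normal in $G$. Since $A$ is normal, $G$ lies in its normalizer $N_A$. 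Applying Lemma~\ref{commuting3} to $A$, either $A$ lies in the linear span of two non-everywhere-parallel commuting fields, or all its infinitesimal generators are parallel to a single $X\in\ghatX$. In the first case Lemma~\ref{lastversionLemma3} makes $N_A$ abelian; in the second, if $A$ is not contained in a single flow $\mathrm{Exp}(tX)$, the second bullet of Lemma~\ref{commuting7nowlemma} again makes $N_A$ abelian (here, as in the intended applications, I assume $G$, and hence $A$, finitely generated). Either way $G\subseteq N_A$ would be abelian, a contradiction. Hence $A\subseteq\mathrm{Exp}(tX)$ for a single $X$, so every nontrivial element of $A$ has one and the same order of contact $r=\mathrm{ord}(X)$. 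Now for $\varphi\in A\setminus\{\mathrm{id}\}$ and any $g\in G$ the element $[g,\varphi]$ lies in $A$ and, if nontrivial, satisfies $\nu([g,\varphi])>\nu(\varphi)=r$; this is impossible since every nontrivial element of $A$ has order exactly $r$. Therefore $[g,\varphi]=\mathrm{id}$, i.e. $A\subseteq Z(G)$, which proves that the center is non-trivial.

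By the first bullet of Lemma~\ref{commuting7nowlemma}, $N_A$ is the centralizer of a nontrivial element of $A$, described by Lemma~\ref{commuting2}: either it consists of the exponentials of the fields $aX$ with $a$ a first integral of $X$, or there is $Y\in\ghatX$ not everywhere parallel to $X$ with $[X,Y]=0$ and $N_A=\{\,F:\log F=aX+bY,\ a,b\ \text{first integrals of }X\,\}$. In the former case every generator has the form $aX$, and since $[aX,a'X]=0$ for first integrals $a,a'$, Lemma~\ref{commuting1} forces $G$ abelian, a contradiction; so $Y$ exists and $G\subseteq N_A$. Writing $\log F=a_FX+b_FY$, I would invoke Lemma~\ref{formerclaim2}: the commutator of two such elements has generator parallel to $X$ precisely when $b_{F_1}/b_{F_2}$ is constant. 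The key claim to establish is that solvability forces all the $b_F$, $F\in G$, to be proportional over $\mathbb{C}$. Granting this, every commutator of $G$ has generator parallel to $X$, so $D^1(G)$ consists of exponentials of fields $aX$; these pairwise commute, whence $D^2(G)=\{\mathrm{id}\}$ and $G$ is metabelian.

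Once all $b_F=\lambda_F b_0$ with $\lambda_F\in\mathbb{C}$, the assignment $\beta:F\mapsto\lambda_F$ is a homomorphism $G\to(\mathbb{C},+)$, because the Campbell--Hausdorff corrections to the $Y$-component are built from expressions $b_{F_1}\,\partial b_{F_2}/\partial Y-b_{F_2}\,\partial b_{F_1}/\partial Y$, which vanish when the $b$'s are proportional. Its kernel is exactly the abelian group of those $F$ with generator parallel to $X$, and this kernel contains $D^1(G)$. A direct bracket computation shows that $F$ with $\log F=a_FX$ is central if and only if $\partial a_F/\partial Y=0$, i.e. (being also a first integral of $X$, with $X,Y$ non-parallel) if and only if $a_F$ is constant; hence the central elements are precisely those in $G\cap\mathrm{Exp}(tX)$. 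To finish I would show that $D^1(G)$ is itself contained in $\mathrm{Exp}(tX)$ — equivalently that the $X$-coefficient of each commutator is constant — and that it exhausts $\ker\beta$, giving $Z(G)=D^1(G)\subseteq\mathrm{Exp}(tX)$.

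The main obstacle is the proportionality claim in the second paragraph together with the constancy of the $X$-coefficient in the third: passing from the controlled last derived group $A$ to the full group $D^1(G)$ requires ruling out an infinite, order-increasing tower of non-proportional $b$-components. I expect to handle this by combining Lemma~\ref{formerclaim2} with the order filtration $G_k=\{F:\nu(F)\ge k\}$, which by Remark~\ref{obs1.1} satisfies $[G_j,G_k]\subseteq G_{j+k-1}$: solvability makes the derived series terminate, while the strict order increase under commutators forces the proportionality to propagate from $A$ downward through the derived series. The Campbell--Hausdorff expansions underlying Lemmas~\ref{commuting1} and~\ref{formerclaim2}, applied degree by degree, are what should make this propagation effective.
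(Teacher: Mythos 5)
Your first paragraph is sound and even improves on the paper at one point: the reduction of $A=D^{n-1}(G)$ to a single flow ${\rm Exp}\,(tX)$ is exactly the paper's Claim~1 (same appeal to Lemmas~\ref{commuting3}, \ref{lastversionLemma3} and~\ref{commuting7nowlemma}), and your order-rigidity argument --- every nontrivial element of $A$ has contact order ${\rm ord}(X)$, normality puts $[g,\varphi]$ in $A$, and Remark~\ref{obs1.1} makes it strictly deeper, hence trivial --- gives $A\subseteq Z(G)$ more directly than the paper, which only obtains the central character of $D^1(G)$ after proving $k=1$. But from there on the proposal is not a proof: the metabelian claim and the identification $Z(G)=D^1(G)\subseteq{\rm Exp}\,(tX)$ are reduced to a ``key claim'' (proportionality of all $b_F$ over $G$, constancy of the $X$-coefficients of commutators) that you explicitly defer, and the mechanism you gesture at cannot deliver it: the filtration inequality $[G_j,G_k]\subseteq G_{j+k-1}$ bounds orders from below but says nothing about proportionality of $Y$-components. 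The paper never proves anything about all of $G$. It assumes $k\geq 2$ and works only at the tail of the derived series: inside $D^{k-1}G$ proportionality is automatic because commutators of $D^{k-1}G$ lie in $D^kG\subseteq{\rm Exp}\,(tX)$, so Lemma~\ref{formerclaim2} applies (the paper's Claim~2); then --- and this is the idea your sketch is missing --- normality is exploited through \emph{conjugation}, not commutation: for $\psi\in D^{k-2}G$ with generator $cX+dY$ and $\varphi\in D^{k-1}G$ with generator $aX+bY$, $b\not\equiv 0$, the conjugate $\psi\circ\varphi\circ\psi^{-1}$ lies again in $D^{k-1}G$, so by Claim~2 its $Y$-component is a constant multiple of $b$; Hadamard's formula then forces $d\,(\partial b/\partial Y)-b\,(\partial d/\partial Y)\equiv 0$, i.e. $d$ is a constant multiple of $b$, and a second application of Lemma~\ref{formerclaim2} makes $D^{k-1}G$ abelian, contradicting $D^kG\neq\{{\rm id}\}$. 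So $k=1$ in a single step; no downward ``propagation'' through the whole group is ever performed, and constraints are imposed on $D^{k-2}G$ only.

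A second, related slip: ``I assume $G$, and hence $A$, finitely generated'' --- the ``hence'' is false, since derived subgroups of finitely generated solvable groups need not be finitely generated, and Lemma~\ref{commuting7nowlemma} genuinely uses finite generation (its proof picks an element of maximal contact order). This is not pedantry: take $X=x^2\,\partial/\partial x$, $Y=y^2\,\partial/\partial y$ and $G=\langle{\rm Exp}\,(yX),\,{\rm Exp}\,(Y)\rangle$. Here $[x^2y\,\partial/\partial x,\,y^2\,\partial/\partial y]=-x^2y^2\,\partial/\partial x$, and repeated commutation with ${\rm Exp}\,(Y)$ produces elements of $D^1G$ whose generators $-x^2y^2\,\partial/\partial x+\cdots$, $2x^2y^3\,\partial/\partial x+\cdots$ have strictly increasing orders; thus $D^1G$ is not finitely generated and is not contained in any single flow, even though all $Y$-components in $G$ are constant multiples of $Y$ (so your proportionality claim holds and $G$ is metabelian). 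In other words, the constancy of the $X$-coefficients that you left ``to finish'' is precisely where the real difficulty sits, and it cannot be established in the generality you state it --- some finite-generation bookkeeping along the derived series is indispensable (a point on which, it must be said, the paper's own invocation of Lemma~\ref{commuting7nowlemma} for $D^kG$ in Claim~1 is also terse). As submitted, then, your proposal proves the non-triviality of the center by a nice independent argument, but the second and third bullets of Proposition~\ref{commuting8} remain unproven.
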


\begin{proof}
Consider the derived series $D^0 G = G$, $D^1G = \langle [ G,G] \rangle$, $D^{i+1} G = \langle [D ^i G, D^i G] \rangle$
of $G$. Let $k \geq 1$ be the largest
integer for which $D^{i} G$ is not trivial. Then $D^k G$ is a non-trivial abelian group which, in addition,
is a normal subgroup of $D^{k-1} G$. Our first purpose is to characterize $D^k G$.

\vspace{0.1cm}

\noindent {\it Claim~1}: $D^k G$ is contained in the exponential ${\rm Exp}\, (tX)$ of a single vector field $X$.

\noindent {\it Proof of Claim~1}. Suppose for a contradiction that the statement is false. Then, being an abelian group, it follows
from Lemma~\ref{commuting3} that either $D^k G$ is contained in the group induced by the linear span of two commuting non-everywhere parallel
vector fields~$X,Y$ or it is as in the second item of Lemma~\ref{commuting7nowlemma}. In both cases, it follows from Lemma~\ref{lastversionLemma3}
and Lemma~\ref{commuting7nowlemma}, that the normalizer $N_{D^k G}$ of $D^k G$ is an abelian group. Since $D^{k-1} G \subset N_{D^k G}$,
we conclude that $D^{k-1} G$ is itself abelian what is impossible since $D^k G$ is not reduced to the identity. The claim is proved.\qed

Since $D^{k-1} G$ is not abelian, we conclude in particular the existence of a vector field $Y$ commuting with $X$ and not everywhere parallel
to $X$. Besides, $D^{k-1} G$ is contained in the centralizer of ${\rm Exp}\, (tX)$ so that, every element in $D^{k-1} G$ has an infinitesimal generator
of the form $aX + bY$, where $a,b$ are first integrals of $X$, cf. Lemma~\ref{commuting7nowlemma}.
Consider again the collection of all infinitesimal generators $a_i X + b_iY$,
$i=1, \ldots ,l$ of non-trivial elements in $D^{k-1} G$.

\vspace{0.1cm}

\noindent {\it Claim~2}: There exists one value of $i$ for which $b_i$ is not identically zero.
Furthermore, if $a_{i_1} X + b_{i_1}Y$
and $a_{i_2} X + b_{i_2}Y$ are such that $b_{i_1} b_{i_2}$ is not identically zero, then the
quotient $b_{i_1} /b_{i_2}$ is a constant.

\noindent {\it Proof of Claim~2}. Clearly there is at least one value of $i$ for which $b_i$ is not identically zero, otherwise $D^{k-1} G$
would be an abelian group.
Next consider, without loss of generality, that $a_1 X + b_1Y$ and $a_2 X + b_2Y$ are such that $b_1 b_2$
does not vanish identically. The commutator subgroup of $D^{k-1} G$ being $D^kG$, all its elements have a (constant)
multiple $X$ as infinitesimal generator. The fact that the quotient $b_{i_1} /b_{i_2}$ must be constant then results at once from
Lemma~\ref{formerclaim2}.\qed

Let then $\overline{f}$ denote a non-identically zero function such that, for every infinitesimal
generator $a_i X +b_i Y$ of an element in $D^{k-1} G$, the coefficient $b_i$ is a constant multiple
(possibly zero) of $\overline{f}$. In the sequel, we are going to
show that $G$ is metabelian, i.e. that $k =1$ (provided that $G$ is not abelian).
Naturally this will complete the proof of our proposition. Indeed, it was just seen that $D^{k-1} G$ is contained in the centralizer of ${\rm Exp}\, (tX)$
and, in turn, $D^k G$ is contained in ${\rm Exp}\, (tX)$ (for a unique vector field $X$). The statement is then established provided that $k=1$.

To prove that $G$ is metabelian, let us suppose for a contradiction that $k \geq 2$. Hence
the group $D^{k-2} G$ can be considered. This group contains $D^{k-1} G$ as a normal
subgroup. Since $D^{k-2} G$ normalizes $D^{k-1} G$, it must also normalize the center
of $D^{k-1} G$, namely the group $D^k G$. Therefore $D^{k-2} G$ is contained in the centralizer
of ${\rm Exp}\, (tX)$ and this ensures that the infinitesimal generator of every element in
$D^{k-2} G$ still has the form $cX +dY$, with $c,d$ being first integrals of $X$. The next
step consists of characterizing these elements so as to show that the commutator
between every two elements in $D^{k-2} G$ possesses an infinitesimal generator that is a multiple of $X$. A contradiction with the fact that $k \geq 2$
then arises since there are elements in $D^{k-1} G$ whose infinitesimal generators have the form
$a_i X + b_i Y$ with $b_i$ non identically zero (Claim~2).

Let then $\psi  \in D^{k-2} G$ be a non-trivial element whose infinitesimal generator
is $cX +dY$ and consider another non-trivial element $\varphi \in D^{k-1} G$ whose
infinitesimal generator $a X + bY$ is such that $b$ does not vanish identically.
According to Hadamard lemma, the infinitesimal
generator $AX + BY$ of $\psi \circ \varphi \circ \psi^{-1} \in D^{k-1} G$ is given by
$aX +bY + [cX+dY, aX +bY] + \cdots$
where the dots stand for terms whose order is strictly greater than the order of
$[cX+dY, aX +bY]$. In turn,
\begin{equation}
[cX+dY, aX +bY] =
\left( d \frac{\partial a}{\partial Y} - b \frac{\partial c}{\partial Y}\right) X +
\left( d \frac{\partial b}{\partial Y} - b \frac{\partial d}{\partial Y}\right) Y \, .
\label{therewego}
\end{equation}
Note that the coefficient $B$ in the infinitesimal generator $AX + BY$ is a constant
multiple of $b$ thanks to Claim~2. It then follows that
the coefficient of $Y$ in the right-hand side of Formula~(\ref{therewego}) must vanish identically.
In fact, if it does not vanish identically, its order is strictly greater than the
order of $bY$ since this coefficient is nothing but $[dY, bY]$. On the other hand, the coefficients
of $Y$ in the remaining terms of Hadamard's formula have order strictly greater than
the order of $[dY, bY]$. From this, we promptly conclude that $B$ cannot be a constant
multiple of $b$ contradicting Claim~2.

On the other hand, from the fact that $d (\partial b/\partial Y) - b (\partial d/\partial Y)$
vanishes identically, it follows that $d$ is a constant multiple of $b$. Therefore, we have
proved that every element in $D^{k-2} G$ has an infinitesimal generator of the form $c_i X + d_iY$
where $d_i$ is a constant multiple of the function $\overline{f}$ (the possibility
of having $d_i$ identically zero being clearly included in the discussion). The desired
contradiction is then obtained by considering the commutator of two elements $\varphi_1, \varphi_2
\in D^{k-2} G$. It follows from Lemma~\ref{formerclaim2} that the
infinitesimal generator of $\varphi_1 \circ \varphi_2 \circ \varphi_1^{-1}
\circ \varphi_2^{-1}$ is a multiple of the vector field $X$. Therefore $D^{k-1} G$ is abelian what contradicts the
assumption that $D^k G$ is not reduced to the identity. The proof of Proposition~\ref{commuting8} is over.
\end{proof}

Let then $G \subset \formdiffn$ be a {\it finitely generated}\, solvable non-abelian group. According to Proposition~\ref{commuting8},
every element in $D^1 G$ is contained in ${\rm Exp}\, (tX)$ for a certain $X \in \ghatX$. In other words, the infinitesimal generator of
every element in $D^1 G$ is a constant multiple of $X$. Consider a finite generating set $\{\psi_1, \ldots , \psi_k\}$ for $G$. The lemma below
complements Proposition~\ref{commuting8} by providing an explicit normal form for the generators $\psi_1, \ldots , \psi_k$.

\begin{lemma}
\label{normalformsolvablegroup}
Let $G \subset \formdiffn$ and $X$ be as above. Then there is a vector field $Y$ commuting with $X$ but not everywhere parallel to $X$ along with
a non-identically zero $\overline{f} \in \fieldC$ such that the following holds:
\begin{enumerate}
\item For every $i \in \{ 1, \ldots , k\}$, the infinitesimal generator $Z_i$ of $\psi_i$ has the form $a_i X + b_i Y$ where $a_i, b_i$ are first integrals
of $X$.

\item For every $i \in \{ 1, \ldots , k\}$, $b_i = \alpha_i \overline{f}$ with $\alpha_i \in \C$ (in particular $\overline{f}$ is itself a first integral of $X$).

\item Given $i,j \in \{ 1, \ldots , k\}$, then
$$
[Z_i, Z_j] = \overline{f}  \left( \frac{\partial (\alpha_i a_j -\alpha_j a_i)}{\partial Y} \right) X \, .
$$

\item Given $i,j \in \{ 1, \ldots , k\}$, then $\psi_i, \, \psi_j$ commute if and only if $[Z_i, Z_j] =0$ what, in turn, is equivalent to saying that
$\alpha_i a_j -\alpha_j a_i$ is a constant.

\item For every $\psi \in G$, the infinitesimal generator $Z$ of $\psi$ has order at $(0,0)$ less than or equal to the order of $X$.

\end{enumerate}
\end{lemma}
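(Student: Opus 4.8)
The plan is to extract each item from the structural facts already in place, chiefly Proposition~\ref{commuting8} together with the centralizer description of Lemma~\ref{commuting2} and the commutator bookkeeping of Lemma~\ref{formerclaim2}; only item~(5) requires a genuinely new observation.

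First I would record the starting data. Since $G$ is solvable and non-abelian, Proposition~\ref{commuting8} yields $Z(G)=D^1G$ and that $D^1G$ is contained in ${\rm Exp}\,(tX)$ for a single $X\in\ghatX$; in particular $D^1G$ is non-trivial, so fix a non-trivial central element $\varphi_0={\rm Exp}\,(c_0X)$. Each generator $\psi_i$ commutes with $\varphi_0$, hence by Lemma~\ref{commuting1} its infinitesimal generator $Z_i$ commutes with $X$. I then invoke the dichotomy of Lemma~\ref{commuting2} for $\varphi_0$: in Case~1 every $Z_i$ would be of the form $h_iX$ with $h_i$ a first integral of $X$, whence $[h_iX,h_jX]=0$ and $G$ abelian, contradicting the hypothesis. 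So Case~2 holds, producing $Y$ commuting with $X$ and not everywhere parallel to it, and forcing $Z_i=a_iX+b_iY$ with $a_i,b_i$ first integrals of $X$; this is item~(1). For item~(2) I use that every commutator $[\psi_i,\psi_j]$ lies in $D^1G\subseteq{\rm Exp}\,(tX)$, so its infinitesimal generator is everywhere parallel to $X$; Lemma~\ref{formerclaim2} then forces $b_i/b_j$ to be constant whenever $b_ib_j\not\equiv0$ (this is exactly Claim~2 in the proof of Proposition~\ref{commuting8}, now read off the generating set). Choosing $i_0$ with $b_{i_0}\not\equiv0$ and setting $\overline{f}:=b_{i_0}$ gives $b_i=\alpha_i\overline{f}$ with $\alpha_i\in\C$, and $\overline{f}$ is a first integral of $X$.

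Items~(3) and~(4) are then formal. Substituting $b_i=\alpha_i\overline{f}$ into the bracket formula of Lemma~\ref{formerclaim2} and using $[X,Y]=0$, the $Y$-component equals $\alpha_i\alpha_j(\overline{f}\,\partial\overline{f}/\partial Y-\overline{f}\,\partial\overline{f}/\partial Y)=0$, while the $X$-component simplifies to $\overline{f}\,\partial(\alpha_ia_j-\alpha_ja_i)/\partial Y$, which is~(3). For~(4), Lemma~\ref{commuting1} says $\psi_i,\psi_j$ commute iff $[Z_i,Z_j]=0$; since $\overline{f}\not\equiv0$ this happens iff $\partial(\alpha_ia_j-\alpha_ja_i)/\partial Y=0$, i.e. iff $\alpha_ia_j-\alpha_ja_i$ is a first integral of $Y$. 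As it is already a first integral of $X$ and $X,Y$ are not everywhere parallel, this is equivalent to its being constant, exactly as in the proof of Lemma~\ref{commuting3}.

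The hard part, and the only step using information beyond the centralizer calculus, is item~(5): I claim $\mathrm{ord}(X)$ is the largest order of contact with the identity attained in $G$. Fix a non-trivial $\psi\in G$ with generator $Z$, recalling that $\mathrm{ord}(Z)$ equals the order of contact of $\psi={\rm Exp}\,(Z)$ with the identity. If $\psi\in Z(G)=D^1G$ then $Z$ is a constant multiple of $X$ and $\mathrm{ord}(Z)=\mathrm{ord}(X)$. Otherwise $\psi$ fails to commute with some $\psi'\in G$, so $[\psi,\psi']\neq{\rm id}$ lies in $D^1G$ and therefore has order of contact exactly $\mathrm{ord}(X)$; but Remark~\ref{obs1.1} forces the order of contact of $[\psi,\psi']$ to exceed strictly that of $\psi$, whence $\mathrm{ord}(Z)<\mathrm{ord}(X)$. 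In either case $\mathrm{ord}(Z)\leq\mathrm{ord}(X)$. The care required here is to split off the central elements (where equality is attained) from the non-central ones, and to notice that every non-trivial element of $D^1G$ realizes the single order $\mathrm{ord}(X)$ because $D^1G\subseteq{\rm Exp}\,(tX)$; this is what lets Remark~\ref{obs1.1} be applied cleanly.
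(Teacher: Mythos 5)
Your proof is correct. For items (1)--(4) you follow essentially the paper's own route: the paper likewise gets item (1) from $D^1G\subseteq Z(G)\cap{\rm Exp}\,(tX)$ together with Lemma~\ref{commuting2}, item (2) from the fact that $[\psi_i,\psi_j]\in D^1G$ has generator parallel to $X$ plus Lemma~\ref{formerclaim2}, and items (3)--(4) by the same direct computation and the same ``first integral of both $X$ and $Y$ implies constant'' argument. You actually supply two details the paper leaves implicit: the existence of $Y$ (which the paper inherits silently from the proof of Proposition~\ref{commuting8}, while you derive it by ruling out Case~1 of Lemma~\ref{commuting2} via non-abelianness) and the existence of some $b_{i_0}\not\equiv 0$ (covered by your citation of Claim~2 of Proposition~\ref{commuting8}). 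Where you genuinely diverge is item (5). The paper argues by contradiction: it adjoins $\psi$ to the generating set, uses the commutation criterion of item (4) to produce a generator not commuting with $\psi$ --- the delicate step being that if $\alpha_1a_j-\alpha_ja_1$ were constant for every $j$ then $\alpha_ia_j-\alpha_ja_i$ would be constant for all pairs, making $G$ abelian --- and only then applies the order jump of Remark~\ref{obs1.1} to the resulting commutator in ${\rm Exp}\,(tX)$. You instead split on centrality: a non-trivial central $\psi$ lies in $Z(G)=D^1G\subseteq{\rm Exp}\,(tX)$ by Proposition~\ref{commuting8} and attains ${\rm ord}(Z)={\rm ord}(X)$ exactly, while a non-central $\psi$ admits some $\psi'\in G$ with $[\psi,\psi']\neq{\rm id}$ in $D^1G$, so Remark~\ref{obs1.1} forces ${\rm ord}(Z)<{\rm ord}(X)$. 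Your version is cleaner --- it needs no generating-set manipulation, bypasses the linear-algebra propagation step (which in the paper requires a little care when $\alpha_1=0$), applies to arbitrary $\psi\in G$ at once, and yields the sharper dichotomy that equality is attained precisely on the center. The price is a heavier dependence on Proposition~\ref{commuting8}: you use the full identification $Z(G)=D^1G$, in particular the inclusion $Z(G)\subseteq{\rm Exp}\,(tX)$, whereas the paper's argument for item (5) only needs $D^1G\subseteq{\rm Exp}\,(tX)$ together with item (4). Since the lemma is stated after, and explicitly on top of, Proposition~\ref{commuting8}, there is no circularity, and both arguments rest on the same underlying mechanism (Remark~\ref{obs1.1} against the fixed order of non-trivial elements of ${\rm Exp}\,(tX)$).
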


\begin{proof}
According to Proposition~\ref{commuting8} all elements in $D^1 G$ are contained in the center of $G$. Since all these elements
have $X$ as infinitesimal generator (up to a multiplicative constant), item~(1) follows from Lemma~\ref{commuting2}. Similarly,
for every $i,j \in \{ 1, \ldots , k\}$ the element $\psi_i \circ \psi_j \circ \psi_i^{-1} \circ \psi_j^{-1}$ lies in $D^1 G$ and hence possesses
an infinitesimal generator parallel to $X$. In view of it, Lemma~\ref{formerclaim2} implies item~(2) above. In turn, item~(3) becomes an
immediate computation. As to item~(4), Lemma~\ref{commuting1} says that $\psi_i, \psi_j$ commute if and only if $[Z_i, Z_j ] =0$. However,
item~(3) shows that $[Z_i,Z_j]=0$ if and only if $\alpha_i a_j -\alpha_j a_i$ is a first integral for $Y$. Since $\alpha_i a_j -\alpha_j a_i$
is also a first integral for $X$, the fact that $X,Y$ are not everywhere parallel ensures that $\alpha_i a_j -\alpha_j a_i$ must be constant
in this case.

It only remains to check item~(5). Suppose for a contradiction that $\psi \in G$ has an infinitesimal generator $Z$ whose
order at $(0,0) \in \C^2$ is strictly greater than the order of~$X$. Modulo adding $\psi$ to the generating set of $G$, we can assume
without loss of generality that $\psi=\psi_1$ so that $Z$ becomes $Z_1$. To prove item~(5), it suffices to find $j \in  \{ 2, \ldots , k\}$ so that
$\psi_j$ does not commute with $\psi_1$. In fact, in this case, the infinitesimal generator of $\psi_1 \circ \psi_j \circ \psi_1^{-1} \circ \psi_j^{-1}$ has
order strictly greater than the order of $X_1$ and, on the other hand, this infinitesimal generator is a constant multiple of $X$ what yields
the desired contradiction. Now, to check the existence of $\psi_j$ as desired, note that $\psi_j$ commutes with $\psi_1$ if and only if
$\alpha_1 a_j -\alpha_j a_1 \in \C$ (item~(4)). Thus, if $\psi_j$ commutes with $\psi_1$ for every $j \in  \{ 1, \ldots , k\}$, the condition
that $\alpha_1 a_j -\alpha_j a_1$ is a constant for every $j \in \{ 2, \ldots ,k\}$ implies that, indeed,
for every $i,j \in \{ 1, \ldots , k\}$, the value of $\alpha_j a_i -\alpha_i a_j$ is a constant as well. In other words $G$ is an abelian group what contradicts
the assumption that $G$ is solvable non-abelian. The lemma is proved.
\end{proof}

We are finally able to prove Proposition~\ref{commuting9}.

\begin{proof}[Proof of Proposition~\ref{commuting9}]
Consider the corresponding sequence of sets $S(j)$ and let $G(j)$ (resp. $\overline{G} (j,j-1)$) be the subgroup generated by
$S(j)$ (resp. $S(j) \cup S(j-1)$). Let $k$ be the largest integer for which $S(k)$ is not trivial.
Then $G(k)$ is abelian while $\overline{G} (k,k-1)$
is solvable. In particular, we can consider the {\it smallest}\, integer $m$ for which
$\overline{G} (m,m-1)$ is solvable. Let us assume for
a contradiction that $m \geq 2$ so that $\overline{G} (m,m-1)$ is strictly contained in $G$. Let $F$ be an element in $S (m-2)$
and note that, by construction, $F$ satisfies
$F^{\pm 1} \circ G (m) \circ F^{\mp 1}
\subset \overline{G} (m,m-1)$. Since $\overline{G} (m, m-1)$, and hence $G (m)$, are both solvable, it follows from
Proposition~\ref{commuting8} that they have non-trivial centers. These centers will respectively be denoted by $Z (\overline{G} (m, m-1))$ and $Z (G (m))$.

Another general remark concerning the groups $G(m)$ and $\overline{G} (m,m-1)$ is as follows. Let $\varphi_0$
be an element (not necessarily unique) of $S (m-1)$ having the smallest order of contact with the identity among all
elements in $S (m-1)$. Then every element in $S (m)$, and hence every element in $G (m)$, has contact order with the
identity strictly larger than the contact order of $\varphi_0$. In other words, there is an element $\varphi_0 \in S (m-1)$ whose order
of contact with the identity is strictly smaller than the orders of contact with the identity of all elements in $G (m)$.

Let us begin the discussion with the case where $\overline{G} (m,m-1)$ is an abelian group

\vspace{0.1cm}

\noindent {\sc Case A}: Suppose that the group $\overline{G} (m,m-1)$ is abelian.

The group $G (m) \subseteq \overline{G} (m,m-1)$ is abelian as well. Suppose that $G (m)$ is contained
in the span of two non everywhere parallel commuting vector fields (without being contained in the exponential
of a single vector field). In this case,
Lemma~\ref{commuting3} ensures the same must hold for $\overline{G} (m,m-1)$. In particular $F$ acts on the linear span $E$ of these vector
fields. Besides the eigenvalues of this action are equal to~$1$ since $F$ is tangent to the identity. The proof of Lemma~\ref{lastversionLemma3}
then shows that $F$ is naturally embedded in $E$. Hence $\overline{G} (m-1,m-2)$ is abelian and the desired
contradiction results at once.

Suppose now that every element in $G (m)$ has an infinitesimal generator of the form $aX$, for
a certain formal vector field $X$ and such that $a$ is a first integral for $X$. Since $G (m) \subseteq \overline{G} (m,m-1)$ and the latter group
is abelian, there are two possibilities for $\overline{G} (m,m-1)$, namely:
\begin{enumerate}
  \item $\overline{G} (m,m-1)$ is contained in the linear span of two non
  everywhere parallel commuting vector fields $Y,Z$.
  \item All elements in $\overline{G} (m,m-1)$ have infinitesimal generators of the form $aX$, where $a$ still is
  a first integral for $X$.
\end{enumerate}
Consider first the situation described in item~(1).
Since there is $\varphi_0 \in \overline{G} (m,m-1)$ whose order of contact with the identity is strictly smaller than the
orders of elements in $G (m)$, the inclusion $G (m) \subset  \overline{G} (m,m-1)$ ensures that $G (m)$ must be
contained in ${\rm Exp}\, (tX)$ for
a certain vector field, still denoted by $X$, belonging to the span in question. By construction, the order of $X$ at $(0,0) \in \C^2$
is strictly greater than the order of the remaining vector fields in the span of $Y,Z$ (apart from constant multiples of $X$).
Thus $F \in S (m-2)$ must take $X$ on $X$. It then follows that $F$ belongs to the centralizer
of ${\rm Exp}\, (tX)$. In other words, the set $S (m-2)$ is contained in the centralizer of ${\rm Exp}\, (tX)$.
Without loss of generality, there is an element $\varphi$ in $S (m-1)$ whose infinitesimal generator is $Y$.
Since $G (m)$ is not trivial, the collection of commutators $[\varphi ,F]$, for every $F \in S (m-2)$ is
contained in ${\rm Exp}\, (tX)$. To obtain the desired contradiction, we proceed as follows: let $F \in
S (m-2)$ be fixed. We can assume that $[\varphi ,F]$ is not the identity otherwise $F$ is contained in the exponential
of the span of $Y,Z$ ($F$ already commutes with ${\rm Exp}\, (tX)$). Hence, according
to Lemma~\ref{lastversionLemma1}, the infinitesimal generator of $F$ has the form $aX + bY$ where
$a,b$ are first integrals for $X$. Since the infinitesimal generator of $[\varphi ,F]$ is $X$, it follows from Lemma~\ref{formerclaim2}
that $b$ is a constant. A contradiction
then arises from observing that this ``normal form'' for
elements in $S (m-2)$ implies that the group generated by $S (m) \cup S (m-1) \cup S (m-2)$ is solvable (cf. again
Lemma~\ref{formerclaim2}). Thus the group $\overline{G} (m-1,m-2)$ is solvable as well and this is clearly impossible.

To complete the proof of Proposition~\ref{commuting9} in the case where $\overline{G} (m,m-1)$ is abelian,
it remains to check the case in which all elements in $\overline{G} (m,m-1)$ have infinitesimal generators
of the form $aX$. Now we have:

\noindent  {\it Claim~1}. $G (m)$ must be contained in ${\rm Exp}\, (tX)$.

\noindent {\it Proof of Claim~1}. Let $F$ be a given element in $S (m-2)$.
Suppose there are $\varphi_1 \in G (m) \cap {\rm Exp}\, (tX)$
and $\varphi_2 \in G (m) \cap {\rm Exp}\, (tAX)$, where $A$ is a non-constant first integral of $X$.
Denote by $r$ (resp. $s$) the order of $X$ (resp. $AX$) at the origin. As already seen, the collection of
vector fields of order $r$ (resp. $s$) inducing a non-trivial element in the abelian group $\overline{G} (m,m-1)$
is finite. Hence, up to passing to a finite power $F^k$ of $F$, it follows that $F^k$ fixed both
$X$ and $AX$, i.e. $F^k$ belongs to the centralizers of both ${\rm Exp}\, (tX)$ and ${\rm Exp}\, (tAX)$.
As previously seen, this implies that $F$ itself belongs to the intersections of the centralizers of
${\rm Exp}\, (tX)$ and ${\rm Exp}\, (tAX)$. Thanks to Lemma~\ref{lastversionLemma2},
the intersections of these centralizers is an abelian group
whose infinitesimal generators have all the form $aX$ ($a$ first integral of $X$), it follows again that
the group generated by $S (m-1) \cup S (m-2)$ is abelian. The resulting contradiction proves the claim.\qed

To conclude the proof, we still know that every $F \in S (m-2)$ belongs to the centralizer of
${\rm Exp}\, (tX)$. Let $S (m-2) = \{ F_1 , \ldots , F_l \}$ and let $c_i X + d_i Y$ denote the infinitesimal
generator of $F_i$, $i=1, \ldots ,l$. To obtain the desired contradiction, it suffices to show the following:

\noindent  {\it Claim~2}. For every pair $i,j$ such that $d_i . d_j$ is not identically zero,
the quotient $d_i /d_j$ is a constant.

Indeed, as already pointed out, Claim~2 implies that $G (m-1, m-2)$ is a solvable group what is impossible.

\noindent {\it Proof of Claim~2}. As already observed, without loss of generality there is
$\varphi$ in $S (m-1)$ whose infinitesimal generator is $Y$. The commutator of $F_i$ an $\varphi$ belongs to $G (m)$ and hence
must admit $X$ as infinitesimal generator (cf. Claim~$1$). The statement of Claim~2 becomes then a direct consequence
of Lemma~\ref{formerclaim2}.\qed

\vspace{0.1cm}

\noindent {\sc Case B}: Suppose that the group $\overline{G} (m,m-1)$ is solvable but not abelian.

According to Proposition~\ref{commuting8} the center of $\overline{G} (m,m-1)$ is non-trivial and all its elements admit a certain
vector field $X$ as infinitesimal generator. In particular $\overline{G} (m,m-1)$ is contained in the centralizer of ${\rm Exp}\, (tX)$ and,
in addition, there is a vector field $Y$ not everywhere parallel to $X$ and commuting with $X$.
Also, by construction, the group $G (m)$ contains the center of $\overline{G} (m,m-1)$. However, we recall that every element in
$\overline{G} (m,m-1)$ has an infinitesimal generator whose order is at most the order of $X$. Since these orders are preserved
by the adjoint action of elements in $\diffCtwo$, the condition $F^{\pm 1} \circ G (m) \circ F^{\mp 1} \subset \overline{G} (m,m-1)$
implies that every element $F\in S(m-2)$ must take $X$ to a constant multiple of $X$ and hence to $X$ itself since $F$ is tangent to
the identity. Thus $F$ belongs to the centralizer of $X$ and hence its infinitesimal generator has the form $aX +bY$ where $a,b$ are
first integrals of $X$.

Let us now consider the group $G (m)$. It was seen that $G (m)$ contains $D^1 \overline{G} (m,m-1)$ and hence elements whose
infinitesimal generator is $X$. On the other hand, recall that $G (m)$ is generated by elements having the form $[\psi, F]$ where
$\psi \in \overline{G} (m,m-1)$ and $F \in \overline{G} (m,m-1) \cup S (m-2)$. In view of the fact that $F$ belongs to the centralizer
of $X$, we conclude that $G (m)$ is also contained in the centralizer of $X$ since so is $\overline{G} (m,m-1)$. There are two cases to
be considered depending on whether or not $G (m)$ contains elements whose infinitesimal generators are not everywhere parallel to
$X$.

Suppose first that $G (m)$ contains an element whose infinitesimal generator $Y$ is not everywhere parallel to $X$. Then $G (m)$ contains
a rank~$2$ abelian group. Modulo passing to a finite power of $F$ this group must be preserved by the adjoint action of
$F$ since $F$ preserves $X$ and it also preserves the order of $Y$: up to multiplicative constants and additive constant multiples of $X$,
in the solvable group $\overline{G} (m,m-1)$ there can exist only finitely
many infinitesimal generators with the same order  since the difference between two of them has order
bounded by the order of~$X$, cf. Proposition~\ref{commuting8}.
Since a power of $F$ preserves $Y$ and $F$ is tangent to the identity,
it follows that $F$ itself preserves $Y$, cf. Lemma~\ref{commuting2}. It then follows from the discussion in Case~A, item~(1), that $F$ is
contained in the linear span of $X,Y$. Therefore $\overline{G} (m,m-1) \cup S (m-2)$ generated a solvable group what yields a
contradiction in the present case.

Suppose now that every element in $G (m)$ has an infinitesimal generator everywhere parallel to $X$. Since the infinitesimal generator
of each element $F \in S (m-2)$ has the form $aX +bY$, where $a,b$ are
first integrals of $X$, the fact that all the commutators $[\psi, F]$, where
$\psi \in \overline{G} (m,m-1)$ and $F \in \overline{G} (m,m-1) \cup S (m-2)$, have infinitesimal generators parallel to $X$ implies that
all the coefficients ``$b$'' differ by a multiplicative constant, cf. Lemma~\ref{formerclaim2}. It then follows that $\overline{G} (m,m-1) \cup S (m-2)$
still generates a solvable group what is impossible. Proposition~\ref{commuting9} is proved.
\end{proof}

\section{Proof of Theorem~C}

Building on the material developed in the previous section, and especially on Proposition~\ref{commuting9},
the proof of Theorem~C will be completed in this last section. Let us first make use of Ghys's observation \cite{ghysBSBM}
concerning convergence of commutators for diffeomorphisms ``close to the identity'' to establish the following proposition:

\begin{prop}
\label{almostthere}
Suppose that $G \subset \diffCtwo$ is a group possessing locally discrete orbits. Then $G$ is solvable.
\end{prop}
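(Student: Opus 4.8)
The plan is to show that $G$ is \emph{pseudo-solvable} and then invoke Proposition~\ref{commuting9} to conclude that it is solvable. It suffices to treat the case in which $G$ is finitely generated: any finitely generated subgroup $H\subseteq G$ inherits locally discrete orbits, since an $H$-orbit is contained in the corresponding $G$-orbit and a subset of a locally discrete set is locally discrete. Once the finitely generated case is settled, Proposition~\ref{commuting9} together with Proposition~\ref{commuting8} shows that every such $H$ is in fact metabelian, i.e. $D^2 H=\{\mathrm{id}\}$. As any single element of $D^2 G$ is a product of iterated commutators involving only finitely many elements of $G$, it lies in $D^2 H$ for a suitable finitely generated $H$, whence $D^2 G=\{\mathrm{id}\}$ and $G$ is solvable. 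So I fix a finite generating set $S\subset\diffCtwo$ and form the associated sequence $S(0)=S$, $S(j+1)=\{[F_1^{\pm 1},F_2^{\pm 1}]:F_1\in S(j),\ F_2\in S(j)\cup S(j-1)\}$ from the definition of pseudo-solvability.

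I would then argue by contradiction: suppose $G$ is not pseudo-solvable, so that $S(j)$ contains a non-trivial element $g_j$ for every $j$. The key point is that these $g_j$ converge to the identity on a fixed neighborhood of the origin. On the formal side this is already forced by Remark~\ref{obs1.1}: passing to a commutator strictly increases the order of contact with the identity, so the order of contact of $g_j$ tends to infinity with $j$. The analytic counterpart --- that the $g_j$ are genuinely defined as elements of the pseudogroup on one common ball $B$ (so that $B\subseteq\mathrm{Dom}_V(g_j)$) and satisfy $\sup_B\lvert g_j-\mathrm{id}\rvert\to 0$ --- is precisely the content of Ghys's observation \cite{ghysBSBM} that diffeomorphisms tangent to the identity are ``close to the identity'' in a sense for which commutators contract quadratically. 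This quantitative contraction is what prevents the (increasingly long) commutator words defining $g_j$ from leaving $B$, and yields uniform convergence $g_j\to\mathrm{id}$ there.

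Granting this convergence, the contradiction with local discreteness is immediate. Each $g_j$ is a non-trivial holomorphic germ, so its fixed-point set is a proper analytic subset of $B$; the union of these sets over $j\in\N$ is meager, hence its complement in $B$ is dense and in particular nonempty. Choose $p$ in this complement. Then $p\in\mathrm{Dom}_B(g_j)$ for all $j$, so $g_j(p)\in\calO_B^G(p)$, while $g_j(p)\neq p$ because $p$ is not fixed by $g_j$; on the other hand $g_j(p)\to p$ since $g_j\to\mathrm{id}$ uniformly on $B$. Thus $p$ is an accumulation point of $\calO_B^G(p)\setminus\{p\}$, contradicting local discreteness of the orbit (take $q=p$ in the definition). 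Hence $G$ is pseudo-solvable, and Proposition~\ref{commuting9} gives that $G$ is solvable.

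The main obstacle is the analytic control asserted in the second paragraph. Formally, the order-of-contact growth of Remark~\ref{obs1.1} says nothing about the convergent representatives, and a priori the words defining $g_j$ grow without bound, so their pseudogroup domains could collapse. Establishing that the $g_j$ stay defined on a common ball and converge uniformly to the identity there is exactly where Ghys's ``close to the identity'' estimate must be adapted to $\diffCtwo$, and it is this step --- rather than the concluding accumulation argument --- that carries the real weight of the proof.
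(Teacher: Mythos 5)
Your proposal follows essentially the same route as the paper: assume the sequence $S(j)$ never dies, use the quadratative contraction of commutators for maps close to the identity to produce non-trivial elements $g_j$ defined on a common ball and converging uniformly to the identity, rule out the fixed-point sets by a countable-union-of-proper-analytic-sets argument, contradict local discreteness, and then conclude via Proposition~\ref{commuting9}. The one step you defer --- that the $g_j$ survive on a common ball and converge uniformly --- is exactly how the paper's proof spends its effort, and it is closed as follows: since all generators are tangent to the identity, conjugating by a homothety $(x,y)\mapsto(\lambda x,\lambda y)$ with $|\lambda|$ small makes them defined on the unit ball and $\delta/4$-close to the identity there for $\delta$ as small as one likes (this, not Remark~\ref{obs1.1}, is where tangency to the identity enters); then the explicit estimate~(\ref{estimateLorayandI}) from \cite{lorayandI} (a variant of Ghys's estimate in \cite{ghysBSBM}) is applied inductively with $\delta_i=\delta_{i-1}/2$, $\tau_i=\tau_{i-1}/2$, so the radii lost form a geometric series and every element of $S(i)$ is defined on $B_{1-12\delta}\supseteq B_{1/2}$ (take $\delta=1/24$) with $\sup\Vert H-{\rm id}\Vert\le\delta/2^{i+3}$. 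So your sketch is correct and completable exactly as you anticipate; no idea is missing, only this bookkeeping. One detail where your write-up is actually more careful than the paper's: the paper's proof silently starts from a finite generating set, whereas your reduction of the general case to the finitely generated one --- using Proposition~\ref{commuting8} to get the uniform bound $D^2H=\{{\rm id}\}$ for all finitely generated $H\subseteq G$, hence $D^2G=\{{\rm id}\}$ --- is a genuine (if small) strengthening of the argument as written. Your fixed-point selection is also marginally simpler than the paper's (you take $p$ outside $\bigcup_j{\rm Fix}(g_j)$, while the paper only needs $p$ outside $\bigcup_N\bigcap_{i\ge N}{\rm Fix}_i$); both work since a countable union of proper analytic subsets has dense complement.
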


\begin{proof}
Consider a finite set $S$ consisting of tangent to the identity local diffeomorphisms of $(\C^2, 0)$. Suppose that the group $G$ generated
by the set $S$ is not solvable (at level of groups of germs of diffeomorphisms). Then consider the pseudogroup generated by $S$ on
a certain (sufficiently small) neighborhood of the identity which will be left implicit in the subsequent discussion for the sake of notation.
The proof of the proposition amounts to showing that the resulting pseudogroup
$G$ is {\it non-discrete}\, in the sense that it contains a sequence of elements $h_i$ satisfying the following conditions:
\begin{itemize}
\item $h_i \neq {\rm id}$ for every $i \in \N$ and, furthermore, as element of the pseudogroup $G$, $h_i$ is defined on a ball $B_{\epsilon}$ of uniform radius $\epsilon > 0$
about $(0,0) \in \C^2$.

\item The sequence of mappings $\{ h_i \}$ converges uniformly to the identity on $B_{\epsilon}$.
\end{itemize}
Assuming the existence of a sequence $h_i$ as indicated above, it follows that each of the sets ${\rm Fix}_i = \{ p \in B_{\epsilon} \, \; ; \; \, h_i (p) = p \}$
is a proper analytic subset of $B_{\epsilon}$. For every $N \geq 1$, pose $A_N = \bigcap_{i=N}^{\infty} {\rm Fix}_i$ so that $A_N$ is also a proper analytic set
of $B_{\epsilon}$. Finally, let $F = \bigcup_{N=1}^{\infty} A_N$. The set $F$ has null Lebesgue measure so that points in $B_{\epsilon} \setminus F$ can be
considered. If $p \in B_{\epsilon} \setminus F$ then, by construction, there is a subsequence of indices $\{ i(j) \}_{j \in \N}$ such that $h_{i(j)} (p) \neq p$ for every
$j$. Since $h_i$ converges to the identity on $B_{\epsilon}$, the sequence $\{ h_{i(j)} (p) \}_{j \in \N}$ converges non-trivially to~$p$. This show that the
orbit of $p$ is not locally discrete and establishes the proposition modulo verifying the existence of mentioned sequence $\{ h_i \}$.

The construction of the sequence $\{ h_i \}$ begins with an estimate concerning commutators of diffeomorphisms that can be found
in \cite{lorayandI}, page~159, which is itself similar to another estimate found in \cite{ghysBSBM}. Let $F_1, F_2$ be local diffeomorphisms (fixing the origin
and) defined on the ball $B_r$ of radius $r > 0$ about the origin of $\C^2$. For small $\delta > 0$,
to be fixed later, suppose that
\begin{equation}
\max \{ \sup_{z \in B_r} \Vert F_1^{\pm 1} (z) -z \Vert \; , \; \sup_{z \in B_r} \Vert F_2^{\pm 1} (z) -z \Vert \} \leq \delta/4 \, . \label{initializing}
\end{equation}
Then, given $0 < \tau \leq 2 \delta$, the commutator $[F_1,F_2]$ is defined on the ball of radius $r -4\delta -\tau$ and, in addition, it verifies the estimate
\begin{equation}
\sup_{z \in B_{r -4\delta -\tau}} \Vert [F_1,F_2] (z) -z \Vert \leq \frac{2}{\tau}  \sup_{z \in B_r} \Vert F_1 (z) -z \Vert \, . \,
 \sup_{z \in B_r} \Vert F_2 (z) -z \Vert \, . \label{estimateLorayandI}
\end{equation}

Let us apply the preceding estimate to elements in $S(i)$. Because $G$ consists of diffeomorphisms tangent to the identity, modulo conjugating
it by a homothety of type $(x,y) \mapsto (\lambda x, \lambda y)$, with $\vert \lambda \vert < 1$, all local diffeomorphisms in $S$ can be supposed
to be defined on the unit ball. Furthermore they can also be supposed to satisfy
Estimate~(\ref{initializing}) for $r=1$ and some arbitrarily small $\delta >0$ to be fixed later. Setting $\tau = 2\delta$, it then follows that
every element $H$ in $S(1)$ is defined on $B_{1- 6 \delta}$ and satisfies
$$
\sup_{z \in B_{1 - 6\delta}} \Vert H (z) -z \Vert \leq \delta /2^4\, .
$$
Next, note that every element in $S(2)$ is the commutator of an element in $S (1)$ and an element in $S \cup S(1)$. Thus, applying again
Estimate~(\ref{estimateLorayandI}) to $r=1-6\delta$, $\delta_1 =\delta/2$ and $\tau_1 = \tau/2 = \delta$, we conclude that every element
$H$ in $S(2)$ is defined on $B_{r_1}$, where $r_1 = 1 -6\delta (1 + 1/2)$. Furthermore these elements $H$ satisfy the estimate
$$
\sup_{z \in B_{r_1}} \Vert H (z) -z \Vert \leq \delta /2^5 \, .
$$
Continuing inductively with $r_i = 1 -6\delta (\sum_{n=0}^i 1/2^n)$, $\delta_i = \delta_{i-1}/2$ and $\tau_i = \tau_{i-1}/2 = \delta_{i-1}$, we conclude that
every element $H^i$ in $S (i)$ is defined on a ball of radius $1 - 12 \delta$ and satisfy $\sup_{z \in B_{1-12\delta}} \Vert H^i (z) - z\Vert \leq \delta / 2^{i+3}$
In particular, if $\delta = 1/24$, all elements in $S(i)$ are defined on the ball of radius~$1/2$
($i \in \N$). Similarly, it is also clear that elements in $S (i)$ converge uniformly to the identity on $B_{1/2}$. Therefore, to obtain the desired sequence $h_i$,
it suffices to pick, for every $i$, one element $h_i \in S(i)$ which is different from the identity. In view of
Proposition~\ref{commuting9}, the sequence
of sets $S (i)$ never degenerate into the identity alone so that the indicated choice of $h_i$ is always possible. The proof of the proposition is over.
\end{proof}

We are finally ready to prove Theorem~C.

\begin{proof}[Proof of Theorem~C]
Let $\Diffgentwo$ denote the group of (germs of) holomorphic diffeomorphisms at $(0,0) \in \C^2$.
Consider a subgroup $G \subset \Diffgentwo$ possessing locally discrete orbits in some neighborhood $U$
of $(0,0) \in \C^2$. Let $\rho$ be the homomorphism from $G$
to ${\rm GL}\, (2,\C)$ assigning to an element $\varphi \in G$ its Jacobian matrix at the origin. Denoting by $\Gamma \subset
{\rm GL}\, (2,\C)$ the image of $\rho$, let us consider the short exact sequence
$$
0 \longrightarrow G_0 = {\rm Ker}\, (\rho) \longrightarrow G \longrightarrow \Gamma \longrightarrow 0 \, .
$$
The kernel $G_0$ of $\rho$ consists of those elements in $G$ that are tangent to the identity. Since $G$, and hence $G_0$, has
locally discrete orbits, it follows from Proposition~\ref{almostthere} that $G_0$ is solvable. Therefore, to conclude that $G$ is solvable,
it suffices to check that the assumption of having locally discrete orbits forces $\Gamma$ to be solvable as well.

While $\Gamma$ is a subgroup of ${\rm GL}\, (2,\C)$, its standard action on $(\C^2, 0)$ has little to do with the action of $G$. In fact,
if $\gamma$ is an element of $\Gamma$, then $\gamma$ is simply the derivative at the origin of an actual element $\varphi \in G$ and it is $\varphi$, rather than $\gamma$,
that acts on $(\C^2, 0)$. Thus, the effect of the non-linear terms in $\varphi$ must be taken into account.

Recall that ${\rm PSL}\, (2, \C)$ is the quotient of the subgroup ${\rm SL}\, (2,\C)$ of ${\rm GL}\, (2,\C)$ consisting of matrices whose determinant equals~$1$
by its center which, in turn, consists of $\{ I , -I \}$ where $I$ stands for the identity matrix. Let us consider the projection of
$\Gamma$ in ${\rm PSL}\, (2, \C)$ and let ${\rm P} G$ denote its image.

\vspace{0.1cm}

\noindent {\it Claim 1}. Without loss of generality, we can suppose that ${\rm P} G$ is not solvable.

\noindent {\it Proof of Claim 1}. Note that ${\rm P} G$ is solvable if and only if its first derived group $D^1 ({\rm P} G)$ is abelian.
Now, denote by $\widetilde{{\rm P} G}$ the projection of $\Gamma$ to ${\rm SL}\, (2,\C)$ as an intermediate step for the projection
of $\Gamma$ onto ${\rm P} G$. The first derived group of $\widetilde{{\rm P} G}$ will be denoted by $D^1( \widetilde{{\rm P} G})$.
Naturally the group $D^1 (\widetilde{{\rm P} G})$ must be abelian provided that
$D^1 ({\rm P} G)$ is abelian. In fact, if two matrices $A, B$ commute, then the same applies to any combination of $\pm A, \, \pm B$.
On the other hand, $D^1 (\widetilde{{\rm P} G})$ coincides with $D^1 \Gamma$ since the determinant of the commutator of two matrices
necessarily equals~$1$. Hence the group $\Gamma$ itself is abelian and the theorem is proved in this case.\qed

\vspace{0.1cm}

Next note that, as a subgroup of ${\rm PSL}\, (2, \C)$, ${\rm P} G$ may or may not be discrete. Suppose ${\rm P} G$ non-discrete.
Being, in addition, non-solvable, it follows that ${\rm P} G$ is dense in ${\rm PSL}\, (2, \C)$. In particular, it contains non-elementary
discrete Kleinian groups (or even Schottky groups). So it is sufficient to show that a group $G \subset \Diffgentwo$ cannot have
locally discrete orbits provided that derivatives at $(0,0) \in \C^2$
of its elements induce a non-elementary Kleinian group in ${\rm PSL}\, (2, \C)$. This will be done below.

Summarizing what precedes, the group ${\rm P} G$ can be supposed to be a non-elementary discrete subgroup of ${\rm PSL}\, (2, \C)$, i.e. ${\rm P} G$ is
a non-elementary Kleinian group. Under this assumption, we need to prove that the corresponding group $G \subset \Diffgentwo$ does not
have locally discrete orbits.
The condition of having a non-elementary Kleinian group ${\rm P} G$ will be exploited through the fact that these
groups always possess loxodromic elements, see \cite{apanasov}. Let us first consider the meaning of loxodromic elements in our context.

Consider an element $\varphi \in G$ whose derivative $D_0 \varphi$ at the origin gives rise to a loxodromic element in ${\rm P} G$. Then
$D_0 \varphi$ is diagonalizable. Note also that the Jacobian determinant of $D_0 \varphi$ can be supposed equal to~$1$ since, again,
we can start out by looking at $D^1 \Gamma$, instead of $\Gamma$, and the former group still induces a non-elementary Kleinian
group in ${\rm PSL}\, (2, \C)$. Therefore, the eigenvalues of $D_0 \varphi$ are $\lambda$ and $\lambda^{-1}$, with $\vert \lambda \vert > 1$.
It follows that $\varphi$ has a hyperbolic fixed point at the origin with stable and unstable manifolds, $W^s_{\varphi}, \, W^u_{\varphi}$,
having complex dimension~$1$ and intersecting transversely at $(0,0) \in \C^2$. Fix then a {\it closed annulus}\, $A^s \subset W^s_{\varphi}$
(resp. $A^u \subset W^u_{\varphi}$) with radii $r_2 > r_1 > 0$ such that every point $p \in W^s_{\varphi} $ (resp. $p \in W^s_{\varphi} $)
possesses an orbit by $\varphi$ non-trivially intersecting $A^s$ (resp. $A^u$).

Given a point $p$ in a fixed neighborhood $U$ of the origin where the group $G$ has locally discrete orbits, denote by
$\calO_G (p)$ the orbit of $p$ (by the pseudogroup) $G$. Similarly, let ${\rm Acc}_p (G)$ denote the set of {\it ends}\,
of $\calO_G (p)$. In other words, if $\calO_G (p)$ is infinite and $p =p_1, p_2, \ldots$ is an enumeration of its points,
then ${\rm Acc}_p (G) = \bigcap_{n=1}^{\infty} [ \overline{\calO_G (p)} \setminus \bigcup_{j=1}^n \{ p_j\} ]$. If $\calO_G (p)$ is finite,
then ${\rm Acc}_p (G) = \emptyset$. Clearly ${\rm Acc}_p (G)$ is closed and invariant by $G$ (viewed as pseudogroup). The following claim is
the key for the proof of Theorem~B.

\vspace{0.1cm}

\noindent {\it Claim 2}. For every point $p \in A^s$, the closed set $A^s \cap {\rm Acc}_p (G)$ is not empty.

\vspace{0.1cm}

Note that Claim~2 does not immediately imply Theorem~C for it does not assert that $p$ itself belongs to $A^s \cap {\rm Acc}_p (G)$.
However, if this were the case, then clearly the orbit of $p$ would not be locally discrete. The resulting contradiction would then ensure that ${\rm P}\,G$
cannot contain a non-elementary Kleinian group so that the statement of Theorem~C would follow. However, by resorting to a standard
application of Zorn Lemma, Claim~2 can still be used to prove Theorem~C. Let us first provide the details and then go back to the proof
of Claim~2. To begin with, if $K \subseteq A^s$ is a non-empty closed set, we shall say that $K$ is {\it relatively invariant}\, by the pseudogroup $G$
if, for every point $p \in K$ and every point $q \in A^s \cap {\rm Acc}_p (G)$, the point $q$ lies in $K$ as well. Next, let
$\mathfrak{C}$ denote the collection of non-empty closed sets in $A^s$ that are relatively invariant by the pseudogroup $G$. Claim~2 ensures that the collection
$\mathfrak{C}$ is not empty. In fact, $A^s \cap {\rm Acc}_p (G)$ in a non-empty set relatively invariant under $G$, and thus $A^s \cap {\rm Acc}_p (G)$
belongs to $\mathfrak{C}$ for every $p \in A^s$. Now, let the collection
$\mathfrak{C}$ be endowed with the partial order defined by inclusion. Finally, given a sequence $K_1 \supset K_2 \supset \ldots$ of
sets in $\mathfrak{C}$, the intersection $K_{\infty}=\bigcap_{i=1}^{\infty} K_i$ is non-empty since each $K_i$ is compact (closed and contained in the compact
set $A^s$). The set $K_{\infty}$ belongs to $\mathfrak{C}$, since it is clearly relatively invariant by $G$, and satisfies $K_{\infty} \subset K_i$ for every~$i$. According
to Zorn Lemma, the collection $\mathfrak{C}$ contain minimal elements, so that we can consider a minimal element $K$. Choose then $q \in K$ and consider
the non-empty set $A^s \cap {\rm Acc}_q (G)$. If $q \not\in {\rm Acc}_q (G)$, then $A^s \cap {\rm Acc}_q (G)$ would be an element of $\mathfrak{C}$
strictly smaller than $K$. The resulting contradiction shows that $q \in A^s \cap {\rm Acc}_q (G)$ and finishes the proof of Theorem~C.\qed

It only remains to prove Claim~2.

\vspace{0.1cm}

\noindent {\it Proof of Claim~2}. Recall that $A^s \subset W^s_{\varphi}$
(resp. $A^u \subset W^u_{\varphi}$) is an annulus such that every $p \in W^s_{\varphi} $ (resp. $p \in W^s_{\varphi} $)
possesses an orbit by $\varphi$ non-trivially intersecting $A^s$ (resp. $A^u$).

Now consider another element $\psi \in G$ whose Jacobian matrix at the origin is hyperbolic with determinant equal to~$1$. Again stable and
unstable manifolds for $\psi$ will respectively be denoted by $W^s_{\psi}, \, W^u_{\psi}$. Since a Kleinian group contains ``many'' loxodromic
elements, $\psi$ can be chosen so that the four manifolds $W^s_{\varphi}, \, W^u_{\varphi}, \, W^s_{\psi}, \, W^u_{\psi}$ intersect pairwise transversely
at the origin. The previously fixed annuli $A^s \subset W^s_{\varphi}$ and $A^u \subset W^u_{\varphi}$ will be denoted in the sequel by
$A^s_{\varphi}$ and $A^u_{\varphi}$. An annulus $A^s_{\psi} \subset W^s_{\psi}$ (resp. $A^s_{\psi} \subset W^s_{\psi}$)
with analogous properties concerning $\psi$ is also fixed. To prove the claim it suffices to check that every point $p$ in $A^s_{\varphi}$ is such that
$A^u_{\psi} \cap {\rm Acc}_p (G) \neq \emptyset$. Indeed, let $p^{\ast} \in A^u_{\psi}$ be a point in $A^u_{\psi} \cap {\rm Acc}_p (G)$. The analogue
argument changing the roles of $\varphi, \, \psi$ and replacing them by their inverses, will ensure that $A^s_{\varphi} \cap {\rm Acc}_{p^{\ast}} (G)
\neq \emptyset$. Since $p^{\ast}$ lies in ${\rm Acc}_p (G)$ and this set is invariant under the pseudogroup $G$, it will follow that
$A^s \cap {\rm Acc}_p (G) \neq \emptyset$ as desired.

Finally to check that $A^u_{\psi} \cap {\rm Acc}_p (G) \neq \emptyset$ for every point $p \in A^s_{\varphi}$, we proceed as follows.
Consider local coordinates $(x,y)$ about the origin of $\C^2$ so that $\{ x=0\} \subset W^u_{\psi}$ and $\{ y=0\} \subset W^s_{\psi}$. Recall that
$W^s_{\varphi}$ is smooth and intersects the coordinate axes transversely at the origin. Since this intersection is transverse, we can assume
that it is the only intersection point of $W^s_{\varphi}$ with the coordinate axes. In particular, a point $p \in A^s_{\varphi}$ has coordinates
$(u,v)$ with $u.v \neq 0$. By iterating $\varphi$, we can find points $p_n = (u_n , v_n) =\varphi^n (p) \in \C^2$ such that $\vert u_n \vert \rightarrow 0$
and
$$
\frac{1}{C} \vert u_n \vert \leq \vert v_n \vert \leq C \vert u_n \vert \, ,
$$
for some uniform constant $C$ related to the ``angles'' between $W^s_{\varphi}$ and the coordinate axes at the origin. Now, for every $n$,
consider the points of the form $\psi (p_n) , \ldots , \psi^{l(n)} (p_n)$ where $l(n)$ is the smallest positive integer for which the absolute value
of the second component of $\psi^{l(n)} (p_n)$ is greater than $\sup_{z \in A^u_{\psi}} \vert z \vert$. The integer $l(n)$ exists since $\psi$
has a hyperbolic fixed point at the origin and the action of $\psi$ on $p_n$ is such that the first coordinate becomes smaller and smaller while
the second coordinate gets larger and larger.
Now it is clear that the set $\bigcup_{n=1}^{\infty} \{ \psi (p_n) , \ldots , \psi^{l(n)} (p_n) \}$ accumulates on $A^u_{\psi}$ and this ends the proof
of Claim~2.
\end{proof}


\bigskip

\bigskip

\begin{flushleft}
{\sc Julio Rebelo} \\
Institut de Math\'ematiques de Toulouse\\
118 Route de Narbonne\\
F-31062 Toulouse, FRANCE.\\
rebelo@math.univ-toulouse.fr

\end{flushleft}

\bigskip

\begin{flushleft}
{\sc Helena Reis} \\
Centro de Matem\'atica da Universidade do Porto, \\
Faculdade de Economia da Universidade do Porto, \\
Portugal\\
hreis@fep.up.pt \\

\end{flushleft}

\end{document}